\documentclass[review,onefignum,onetabnum]{article}

\usepackage[utf8]{inputenc} 
\usepackage[T1]{fontenc}
\usepackage{geometry}
\usepackage{amsmath,amssymb,amstext}
\usepackage{amsthm}
\usepackage{algorithm}
\usepackage{color}
\usepackage{multirow}
\usepackage{booktabs}
\usepackage{adjustbox}
\usepackage{diagbox}
\usepackage{subcaption}
\usepackage{comment}
\usepackage{hyperref}
\usepackage{tikz}
\usetikzlibrary{spy}

\newtheorem{theorem}{Theorem}[section]
\newtheorem{assumptions}{Assumption}

\newtheorem{lemma}[theorem]{Lemma}

\theoremstyle{definition}
\newtheorem{definition}{Definition}[section]

\theoremstyle{remark}
\newtheorem*{remark}{Remark}
\newtheorem{prop}[theorem]{Proposition}
\newtheorem{cor}[theorem]{Corollary}

\newcommand{\R}{\mathbb{R}}
\newcommand{\bR}{\bar{\mathbb{R}}}
\newcommand{\N}{\mathbb{N}}
\newcommand{\F}{\mathcal{F}}
\newcommand{\ik}{i_k}
\newcommand{\xk}{x_k}
\newcommand{\xkk}{x_{k+1}}
\newcommand{\xkm}{x_{k-1}}
\newcommand{\yk}{\tilde{y}_k}
\newcommand{\hyk}{\hat{y}_k}
\newcommand{\dk}{d_k}
\newcommand{\uk}{u_k}
\newcommand{\sk}{s_k}
\newcommand{\phik}{\phi_{\ik}^{\xk}}

\newcommand{\ak}{\alpha_{k}}
\newcommand{\Dk}{D_{k}}
\newcommand{\bk}{\beta_{k}}
\newcommand{\Uk}{U_{\ik}}
\newcommand{\mk}{m_k}
\newcommand{\xo}{x_0}
\newcommand{\prox}{\mathrm{prox}}
\newcommand{\ff}{f}
\newcommand{\fg}{\phi}
\DeclareMathOperator{\denoiser}{\mathrm{D}}

\usepackage{tikz}
\colorlet{BLUE}{blue}
\title{Block-coordinate Plug-And-Play Methods with Armijo-like line-search for Image Restoration}

\author{F. Porta\thanks{Dipartimento di Scienze Fisiche, Informatiche e Matematiche, Universit\`{a} di Modena e Reggio Emilia, Via Campi 213/b, 41125 Modena, Italy } \and S. Rebegoldi\footnotemark[1]
\and A. Sebastiani\thanks{MaLGa Center, Dipartimento di Informatica, Bioingegneria, Robotica e Ingegneria dei Sistemi (DIBRIS), Universit\`{a} degli Studi di Genova, Via Dodecaneso 35, 16146 Genova, Italy }
}

\date{}

\begin{document}

\maketitle
\begin{abstract}
   \textcolor{black}{In this paper, we develop a class of block-coordinate forward-backward methods for solving non-convex and non-separable composite optimization problems, and specialize them to imaging inverse problems within the Plug-and-Play (PnP) framework based on Gradient Step (GS) denoisers. The block-coordinate strategy is designed to reduce the high memory consumption associated with the computation of GS denoisers, whose implementation typically requires storing large computational graphs.} The proposed methods allow for the joint use of inertial acceleration, variable metric strategies, inexact proximal computations, and adaptive steplength selection via an appropriate line-search procedure. Under mild assumptions on the objective function, we establish a sublinear convergence rate and the stationarity of the limit points. Moreover, convergence of the entire sequence of the iterates is guaranteed under a Kurdyka-Łojasiewicz assumption. Numerical experiments on ill-posed imaging problems, including deblurring and super-resolution, demonstrate that the  proposed PnP approach achieves state-of-the-art reconstruction quality while substantially reducing GPU memory requirements, making it particularly suitable for large-scale and resource-constrained imaging applications.
\end{abstract}

\section{Introduction}\label{sec:intro}
The main goal of imaging inverse problems is to recover an unknown image $x\in\R^n$ from measurements $b\in\R^m$, which are related through a linear acquisition model $b=Ax+\eta$, where $A \in \R^{m \times n}$ is the forward operator, mapping the image space to the measurement space, and $\eta \in \R^m$ denotes additive measurement noise. Since such inverse problems are typically ill-posed, direct inversion does not provide meaningful solutions.

Variational approaches to image restoration \cite{Bertero-etal-2008,Vogel2002} propose to recover an approximation of the unknown image as the solution of an optimization problem of the form
\begin{equation}\label{eq:problem}
    \underset{x\in\R^n}{\operatorname{argmin}} \ F(x)\equiv \fg(x) + \ff(x),
\end{equation}
where $\fg:\R^n\rightarrow \R$ is a data fidelity term, typically related to the noise distribution and measuring the likelihood of the observed data given the unknown image, and $\ff:\R^n\rightarrow \R$ is a regularization term, enforcing stability and a priori information on the solution. The explicit choice of the second term usually depends on prior knowledge of the image; however, arbitrary assumptions may introduce bias or artifacts, such as the staircasing effect associated with the Total Variation regularization.

Plug-and-Play (PnP) approaches \cite{Venkatakrishnan-etal-2013} represent a powerful and innovative paradigm for solving inverse problems in computational imaging by replacing the usual explicit regularization step with an implicit step, implemented by means of a denoising algorithm. This makes the PnP approach very modular and flexible, since any off-the-shelf denoiser can be plugged into the framework. Instead of designing a hand-crafted regularization term, cutting-edge denoisers can be embedded into the algorithm, often outperforming traditional regularization techniques. Indeed, in the last decade, PnP methods have achieved state-of-the-art results in several image restoration tasks \cite{Ahmad-etal-2020,Buzzard-etal-2018,Meinhardt-etal-2017,Yuan-etal-2020,Zhang-et-al-2021}.
\textcolor{black}{From a theoretical viewpoint, their convergence analysis has often relied on assumptions such as strong convexity of the data-fidelity term \cite{Ryu-etal-2019} or specific properties of the denoiser, including near-non-expansiveness or symmetric Jacobians \cite{Pesquet-et-al-2021,Reehorst-etal-2018,Romano-etal-2017,Ryu-etal-2019,Xu-etal-2020}. While these assumptions may be restrictive, the corresponding methods often remain very competitive in practice. At the same time, the analysis of PnP methods has been extended to different noise models and algorithmic frameworks, including approaches based on Bregman geometries \cite{Al-Shabili-etal-2022,Benfenati2025,hurault2023convergent}.} 

\textcolor{black}{
Motivated by the need for PnP schemes with theoretical guarantees under milder or more natural assumptions,
this work focuses specifically on the Gradient Step PnP approach introduced in \cite{hurault2022gradient}, where PnP schemes can be recast as optimization algorithms for well-defined objective functionals.
Unlike classical black-box PnP approaches, in which an off-the-shelf denoiser is embedded into an iterative scheme without necessarily admitting an explicit variational interpretation, this framework defines the denoising operator as the gradient step of a potential function parametrized by a convolutional neural network, thus allowing for a clear connection between PnP methods and variational regularization. }
More precisely, given a noise level $\sigma\in\R^+$ and a smooth function $g_{\sigma}:\R^n\rightarrow\R$, the Gradient Step (GS) denoiser is defined as 
    \begin{equation}\label{eq:D_sigma}
        \denoiser_{\sigma}(x)=x-\nabla g_{\sigma}(x).
    \end{equation}
In practice, the authors in \cite{hurault2022gradient} propose to choose 
\begin{equation}\label{eq:g_sigma}
    g_{\sigma}(x)=\frac{1}{2}\|x-N_{\sigma}(x)\|^2,
\end{equation}
 where $N_{\sigma}:\R^n\rightarrow\R^n$ is any differentiable neural network architecture that has proven to be efficient for image
denoising. In this framework, given a positive regularization parameter $\textcolor{black}{\varrho}$, the  term $f$ in \eqref{eq:problem} takes the form $\textcolor{black}{\varrho} g_\sigma$ and, by following \cite{hurault2022gradient}, the resulting optimization problem is solved by means of a PnP method whose general iteration can be seen as a forward-backward step on \eqref{eq:problem}, i.e.,
\begin{equation}\label{eq:PnP_original}
    x_{k+1} = \prox_{\alpha \phi}\left(x_k- \alpha \nabla f(x_k)\right) = \prox_{\alpha \phi}\left(x_k- \alpha \textcolor{black}{\varrho}\nabla g_{\sigma}(x_k)\right),
\end{equation}
where $\alpha\in\mathbb{R}^+$ is a suitable steplength and $\prox_{\alpha\phi}$ denotes the proximal operator of $\alpha\phi(\cdot)$, namely
\begin{equation}\label{eq:prox_no_D}
\prox_{\alpha\phi}(x) = \underset{y\in\mathbb{R}^n}{\operatorname{argmin}} \ \frac{1}{2}\|y-x\|^2 + \alpha\phi(y), \quad \forall x\in\mathbb{R}^n.
\end{equation}
Remarkably, by leveraging convergence results valid for the standard forward-backward gradient 
method in the non-convex setting, in \cite{hurault2022gradient} the authors deduce that the PnP scheme \eqref{eq:PnP_original} converges to a stationary point of $\phi+\textcolor{black}{\varrho} g_\sigma$, even when the data-fidelity term is not strongly convex and without assuming the non-expansiveness of the denoiser. The main limitation 
of the PnP forward-backward approach in \eqref{eq:PnP_original} is the evaluation of $\nabla g_\sigma$. Indeed, in view of \eqref{eq:D_sigma} and the definition of $g_\sigma$ in \eqref{eq:g_sigma}, the gradient of $g_\sigma$ can be written explicitly as
\begin{equation}\label{eq:denoising_step}
    \nabla g_\sigma(x) = x - \denoiser_{\sigma}(x) =x - N_{\sigma}(x) - J_{N_{\sigma}}(x)^T(x-N_{\sigma}(x)),
\end{equation}
where $J_{N_{\sigma}}(x)$ denotes the Jacobian of $N_{\sigma}$ evaluated at $x$, which can be computed via an automatic differentiation pipeline. As a consequence, the forward step in \eqref{eq:PnP_original} requires the computation of the Jacobian $J_{N_\sigma}(x)$ using a backpropagation routine that 
stores the entire computational graph associated to the neural network. Unfortunately, the memory required for this operation scales with the image size, which significantly limits the applicability of GS denoisers and the corresponding PnP algorithm \eqref{eq:PnP_original} on real applications in resource-constrained settings.

\textit{Contributions.} \textcolor{black}{The contribution of this work is twofold. First, we propose and analyze a block-coordinate forward-backward framework for solving composite optimization problems of the form \eqref{eq:problem}, allowing for possibly non-convex objective functions and non-separable terms. Second, we leverage this framework to define a novel class of block-coordinate PnP methods based on a GS denoiser of the form \eqref{eq:D_sigma}--\eqref{eq:g_sigma}. In this setting, the block-coordinate strategy is designed to efficiently reduce the memory consumption associated with the computation of $\nabla g_\sigma$, which represents the main computational bottleneck of PnP methods based on GS denoisers. In particular, image patches are used as blocks of coordinates, and the properties of convolutional neural networks are exploited to obtain an efficient block-wise computation of $\nabla g_\sigma$.}

\textcolor{black}{More precisely, the general framework is developed under the assumptions that $\fg:\R^n \to \R$ is continuously differentiable with a locally Lipschitz continuous gradient and is convex along the coordinates, while $\ff:\R^n \to \R$ has a Lipschitz continuous gradient. No separability assumption is imposed on either term. The minimization problem associated with the PnP paradigm based on GS denoisers fits into this framework by taking $\phi$ as a possibly non-separable data-fidelity term and by choosing $f=\textcolor{black}{\varrho} g_\sigma$, with $\textcolor{black}{\varrho}\in\R^+$ and $g_\sigma$ defined in \eqref{eq:g_sigma}. Indeed, classical smooth data-fidelity terms satisfy the assumptions required on $\phi$, while $g_\sigma$ is typically non-separable but has a Lipschitz continuous gradient whenever the neural network $N_\sigma$ in \eqref{eq:g_sigma} is given by a composition of continuously differentiable functions with bounded and Lipschitz continuous derivatives. Such regularity requirement on $N_\sigma$ is compatible with the differentiable convolutional architecture used in the GS denoiser framework of \cite{hurault2022gradient}, and is not meant to cover arbitrary black-box PnP denoisers.
}
\textcolor{black}{Finally, we emphasize that the restriction to GS denoisers plays a key role in the proposed PnP analysis: it provides an explicit variational structure, so that the corresponding scheme can be studied as an optimization algorithm for the objective function $\phi+\varrho g_\sigma$, while still allowing for competitive practical performance with respect to PnP approaches based on black-box denoisers, as shown in \cite{hurault2022gradient}.}

The proposed block-coordinate forward-backward framework can be considered as a generalization of the Proximal Heavy-ball Inexact Line-search Algorithm (PHILA) devised in \cite{Bonettini-Prato-Rebegoldi-2024} to the alternating block-coordinate setting. The PHILA method applied to problem \eqref{eq:problem} relies on the following iteration
\begin{equation}\label{eq:PHILA}
    \begin{aligned}
        \tilde{y}_k &\approx \prox_{\alpha_k \phi}^{D_k}\left(x_k -\alpha_k D_k^{-1}\nabla f(x_k)+\beta_k(x_k-x_{k-1})\right)\\
        x_{k+1} &=x_k+\lambda_k(\tilde{y}_k-x_k),
    \end{aligned}
\end{equation}
where $\alpha_k$ and $\beta_k$ are positive parameters, $D_k$ is a symmetric and positive definite matrix, $\operatorname{prox}_{\ak \phi}^{\Dk}(\cdot)$ is the proximal operator of $\ak \phi$ with respect to the norm induced by $\Dk$, and $\lambda_k\in(0,1]$ is selected by an Armijo-like line-search procedure, to guarantee a sufficient decrease of a proper merit function. The 
inexact computation of the proximal operator related to the data fidelity term $\phi$ is also allowed. Extending PHILA to a block-coordinate framework requires a careful design of the inertial terms across blocks, together with the definition of a suitable merit function decreasing descent 
throughout the iterations. Under the previously stated assumptions on the functions $\phi$ and $f$, we prove a sublinear convergence rate and the stationarity of the limit points. Furthermore, if appropriate surrogate functions of $F$ satisfy the Kurdyka-\L{}ojasiewicz property on their domains, convergence of the entire sequence generated by the proposed framework and related convergence rates on the function values can be established. Numerical experiments on ill-posed imaging problems, such as deblurring and super-resolution, show that the block-coordinate PHILA approach achieves state-of-the-art image restoration performance while significantly reducing GPU memory usage. This property makes the proposed framework particularly suitable for resource-constrained environments and large-scale imaging problems.

The reader may wonder why a forward-backward approach is adopted to solve \eqref{eq:problem}, rather than a full gradient scheme, even though both terms $\phi$ and $f$ are assumed to be differentiable. This choice can be justified by several considerations.
\textit{(i)} As shown in \cite{Combettes-etal-2019}, activating smooth functions through their gradients and nonsmooth ones through their proximity operators is not necessarily the most efficient strategy from a numerical standpoint, since proximal steps may positively affect the asymptotic performance of algorithms, particularly in image restoration problems.
\textit{(ii)} The Lipschitz constant of $\nabla \phi + \nabla f$ may be significantly larger than that of $\nabla f$ \cite[Section~9.3]{Beck-2017}, which leads to smaller feasible steplengths  for the full gradient method, due to the inverse dependence on the Lipschitz constant of $\nabla \phi + \nabla f$.
\textit{(iii)} Finally, the approach proposed in \cite{hurault2022gradient}, which inspired this work and relies on the forward-backward iteration \eqref{eq:PnP_original}, has demonstrated strong empirical performance across various imaging tasks. Our numerical experiments further confirm that a forward-backward strategy seems preferable to a full gradient approach when solving imaging problems with PnP schemes. \textcolor{black}{Although our numerical assessment focuses on imaging inverse problems, the above motivations may also be relevant in other contexts. For instance, forward-backward algorithms have been used as efficient alternatives to purely gradient/subgradient-based methods for minimizing regularized objective functions in machine learning \cite{Duchi-etal-2009} and deep learning \cite{Yang-etal-2022,Yang-etal-2022b}.}

\textcolor{black}{Our proposed framework can be applied to problem \eqref{eq:problem} even when none of the terms $f$ and $\phi$ are convex along the coordinates. For instance, this may arise in the Gradient Step PnP approach \cite{hurault2022gradient} when noise models other than the Gaussian one are considered, see e.g. Cauchy noise \cite{Parenti-etal-2026,Sciacchitano-et-al-2015}. In such cases, only a full gradient approach is admissible; still, the method can be successfully applied, as our numerical experiments show.}

\textit{Related works.} We remark that several block-coordinate forward-backward methods for possibly non-convex optimization have already been proposed in the literature, often incorporating either inertial acceleration or variable metric strategies \cite{Bonettini-Prato-Rebegoldi-2018,Chouzenoux-Pesquet-Repetti-2016,Frankel-Garrigos-Peypouquet-2015,Gur-Sabach-shtern-2023,Ochs-2019,Pock-Sabach-2018,Rebegoldi-2024,Xu-Yin-2013,Xu-Yin-2017}. However, these approaches typically do not combine both mechanisms and are tailored to problems of the form \eqref{eq:problem} with separable (possibly non-smooth) regularization 
terms, which limits their applicability to settings involving differentiable but non-separable objectives, such as PnP frameworks. While a few methods address non-separable composite problems  \cite{Aberdam-etal-2021,chorobura2023random,Grishchenko-etal-2021,Hanzely-etal-2018,Latafat-etal-2022,Necoara-2025}, they generally rely on restrictive steplength rules, lack line-search strategies, do not allow inexact proximal computations, and do not exploit inertial or variable metric acceleration. In contrast, the approach proposed in this paper is specifically designed for non-convex and non-separable problems, offering provable convergence guarantees together with adaptive step-size selection and the joint use of inertial and variable metric strategies.

To the best of our knowledge, only a very limited number of works on block-coordinate PnP algorithms exist in the literature \cite{Gan-etal-2023,Huang-etal-2025,Sun-etal-2020}. \textcolor{black}{The approaches in \cite{Gan-etal-2023,Huang-etal-2025}} belong to the forward-backward class but differ from our proposal in that they consider separable regularization terms, possibly based on distinct learned denoisers acting on different variables. In those works, the notion of block of coordinates is different, as the blocks correspond to distinct variables, e.g., \textcolor{black}{the kernel and the image in blind deconvolution \cite{Gan-etal-2023}} or \textcolor{black}{the dictionary and the sparse coefficients in dictionary learning \cite{Huang-etal-2025}}. In contrast, \textcolor{black}{and similarly to our work}, the PnP setting considered \textcolor{black}{in \cite{Sun-etal-2020}} 
involves blocks corresponding to different patches of the same image; \textcolor{black}{however, it additionally requires the block-nonexpansiveness of the denoiser, a property that can be ensured heuristically, e.g. via spectral normalization. }\textcolor{black}{We mention that a similar image patch-splitting strategy is also adopted in \cite{Zhang-et-al-2021} only for implementation purposes, to apply CNN denoisers to large images in a memory-efficient way, without formulating the corresponding PnP problem or algorithm in a block-coordinate form.} Finally, none of the methods in \cite{Gan-etal-2023,Huang-etal-2025,Sun-etal-2020,Zhang-et-al-2021} employs line-search strategies for steplength selection; instead, the steplength must be bounded above by a constant related to the Lipschitz constant of the gradient of the fidelity term, which may not be known in practice.

\textit{Contents.} The paper is organized as follows. Section~\ref{sec:preliminaries} introduces the notations and recalls basic definitions and results on proximal operators. The proposed algorithm is presented in Section~\ref{sec:method}, together with its application to the PnP framework and an in-depth discussion of related works. The theoretical convergence analysis is provided in Section~\ref{subsec:convergence_analysis}. Numerical experiments are reported in Section~\ref{sec:numerical_experiments}. Finally, Section~\ref{sec:conclusions} is devoted to concluding remarks and future perspectives.

\section{Preliminaries}\label{sec:preliminaries}
The following notations will be used throughout the paper. The symbol 
$\|x\|=\sqrt{\langle x,x\rangle}$ stands for the standard Euclidean norm on $\R^n$, where $\langle\cdot,\cdot\rangle$ is the standard Euclidean inner product. The set of real matrices of order $n$ is denoted by $\R^{n\times n}$, and the identity matrix of order $n$ is indicated by $I_n$. The symbol $\mathcal{S}(n)$ stands for the set of all symmetric matrices of order $n$, whereas $\mathcal{S}_{++}(n)$ denotes the set of real, symmetric and positive definite matrices of order $n$. Given $D\in\mathcal{S}_{++}(n)$, the norm induced by $D$ is defined as $\|x\|_D = \sqrt{\langle x, Dx\rangle}$. Given $D_1,D_2\in\mathcal{S}(n)$, we say that $D_1\preceq D_2$ if and only if $\langle D_1x,x\rangle \leq \langle D_2x,x\rangle$ for all $x\in\R^n$.

The following definition is required to efficiently manage blocks of coordinates in the analysis of the proposed framework.

\begin{definition}\label{def:U}
    Let $U\in\R^{n\times n}$ be a column permutation of the identity matrix and let $U=[U_1,\ldots,U_N]$ be a decomposition of $U$ in $N$ submatrices, where each $U_i\in\R^{n\times n_i}$ is defined with $n_i=|I_i|$, being $I_i\subset\{1,\ldots,n\}$ the set of active indexes, and $\sum_{i=1}^N n_i=n$. 
\end{definition}
\begin{remark}
    Any $x\in\R^n$ can be written as $x=\sum_{i=1}^N U_i x^{(i)}$, where $x^{(i)}=U_i^Tx\in\R^{n_i}$ is the $i-$th block of coordinates of $x$, $i=1,\ldots,N$.
\end{remark}

The concept of proximal operator of a function with respect to the norm induced by a real, symmetric, positive definite matrix, is crucial in the proposed framework. Below we report its definition.

\begin{definition}\label{def:proximal_operator}\cite[Section 2.3]{Frankel-Garrigos-Peypouquet-2015}
Let $\phi:\R^n\rightarrow \R \cup \{\infty\}$ be a proper, lower semicontinuous function, $\alpha>0$ and $D\in\mathcal{S}_{++}(n)$. The proximal operator of $\alpha\phi$ with respect to the norm induced by $D$ is the map $\prox_{\alpha \phi}^D:\R^n\rightrightarrows \R^n$ defined by
\begin{equation}\label{eq:proximal_operator}
    \prox_{\alpha\phi}^D(x)=\underset{y\in\R^n}{\operatorname{argmin} } \ \frac{1}{2}\|y-x\|_D^2+\alpha\phi(y), \quad \forall \ x\in\R^n.
\end{equation}    
\end{definition}

\begin{remark}
    If $\phi$ is convex in \eqref{eq:proximal_operator}, then $\prox_{\alpha \phi}^D$ is single-valued, i.e., there exists a unique solution to problem \eqref{eq:proximal_operator}; this follows from the fact that the objective function in \eqref{eq:proximal_operator} is strongly convex whenever $\phi$ is convex. If $D = I_n$, then $\prox_{\alpha \phi}^D = \prox_{\alpha \phi}$, being $\prox_{\alpha \phi}$ defined as in \eqref{eq:prox_no_D}.
\end{remark}

As explained in Section \ref{sec:intro}, the proposed framework is based on the alternating application of the inertial proximal-gradient step \eqref{eq:PHILA} on each block of coordinates. Such a step may be computed inexactly, for instance when the proximal operator of the function $\phi$ is not known in closed form. For that reason, we require a well-defined inexactness criterion for the computation of the proximal-gradient point that ensures the convergence of the scheme, while being implementable in a number of practical scenarios. The following definition of proximal inexactness is taken from \cite{Bonettini-Prato-Rebegoldi-2024}.

\begin{definition} \cite[Section 3]{Bonettini-Prato-Rebegoldi-2024}
Let $f:\R^n\rightarrow \R$ be continuously differentiable and $\phi:\R^n\rightarrow \R \cup \{\infty\}$ proper, lower semicontinuous and convex. Choose $\alpha>0$, $\beta>0$, $D\in\mathcal{S}_{++}(n)$ and $(x,w)\in\R^n\times \R^n$. Define $h(\cdot;x,w):\R^n\rightarrow \R\cup\{\infty\}$ as the strongly convex function given by
\begin{equation}\label{eq:general_metric_function}
    h(y;x,w) = \langle \nabla f(x)-\frac{\beta}{\alpha}D(x-w),y-x\rangle+\frac{1}{2\alpha}\|y-x\|_D^2+\phi(y)-\phi(x), \quad \forall \ y \in\R^n,
\end{equation}
and the inertial proximal--gradient point as
\begin{equation}\label{eq:general_exact_prox}
\hat{y}=\prox^D_{\alpha \phi}(x-\alpha D^{-1} \nabla f(x)+\beta(x-w))=\underset{y\in\R^n}{\operatorname{argmin}} \ h(y;x,w).     
\end{equation}
Given $\tau\geq 0$, we call $\tau-$approximation of $\hat{y}$ any point $\tilde{y}\in\R^n$ satisfying the following condition
\begin{equation}\label{eq:general_inexact_prox}
    h(\tilde{y};x,w
    )\leq \left(\frac{2}{2+\tau}\right)h(\hat{y};x,w
    ).
\end{equation}
Such a point is denoted with $\tilde{y}\approx_{\tau} \hat{y}$.
\end{definition}

\begin{remark}
Function $h(\cdot;x,w)$ in \eqref{eq:general_metric_function} is defined so that $h(\hat{y};x,w)\leq h(x;x,w)=0$. From this remark, it also follows that the inexactness condition \eqref{eq:general_inexact_prox} is well-defined and $h(\tilde{y};x,w)\leq 0$. If $\tau=0$, then $\tilde{y}=\hat{y}$, i.e., the $\tau-$approximation coincides with the unique proximal--gradient point; otherwise, if $\tau>0$, we have an approximation $\tilde{y}$ that gets closer to $\hat{y}$ as $\tau$ tends to zero (coarser as $\tau$ tends to infinity, respectively).
\end{remark}

The following result estimates the mutual distance between the points $x,\hat{y},\tilde{y}$ in terms of the function value $-h(\tilde{y};x,w)$. It is taken from \cite{Bonettini-Prato-Rebegoldi-2024} and follows by slightly modifying the arguments in \cite[Lemma 8]{Bonettini-Ochs-Prato-Rebegoldi-2023}, whose proof was given under the assumption $D=I_n$.

\begin{lemma}\label{lem:general_inequalities} \cite[Lemma 9]{Bonettini-Prato-Rebegoldi-2024}
Let $f:\R^n\rightarrow \R$ be continuously differentiable and $\phi:\R^n\rightarrow \R \cup \{\infty\}$ proper, lower semicontinuous and convex. Choose $0<\alpha\leq \alpha_{\max}$, $\beta>0$, $\mu>0$, $D\in\mathcal{S}_{++}(n)$ with $\frac{1}{\mu}I_n\preceq D \preceq \mu I_n$ and $(x,w)\in\R^n\times \R^n$. Let $h(\cdot;x,w)$ be the function in \eqref{eq:general_metric_function} and $\hat{y}, \tilde{y}$ the points defined as in \eqref{eq:general_exact_prox}-\eqref{eq:general_inexact_prox}. Then, the following inequalities hold:
\begin{align}
    \|\hat{y}-x\|^2&\leq 2\alpha_{\max}\mu\left(1+\frac{\tau}{2}\right)(-h(\tilde{y};x,w))\label{eq:ine_gene_1}\\
    \|\tilde{y}-\hat{y}\|^2&\leq \alpha_{\max} \mu\tau(-h(\tilde{y};x,w))\label{eq:ine_gene_2}\\
    \|\tilde{y}-x\|^2&\leq 2\alpha_{\max}\mu\left(\sqrt{1+\frac{\tau}{2}}+\sqrt{\frac{\tau}{2}}\right)^2(-h(\tilde{y};x,w)).\label{eq:ine_gene_3}
\end{align}
\end{lemma}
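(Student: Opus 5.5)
The argument rests on the strong convexity of $h(\cdot;x,w)$ in the $D$-norm. Since $\phi$ is convex and the quadratic term is $\frac{1}{2\alpha}\|y-x\|_D^2$, the function $h(\cdot;x,w)$ is $\frac{1}{\alpha}$-strongly convex with respect to $\|\cdot\|_D$. As $\hat{y}$ is its minimizer, we have
\begin{equation*}
h(y;x,w)\geq h(\hat{y};x,w)+\frac{1}{2\alpha}\|y-\hat{y}\|_D^2 \quad \forall\, y\in\R^n .
\end{equation*}
The matrix bounds give $\|v\|^2\leq \mu\|v\|_D^2$, and $\alpha\leq\alpha_{\max}$ gives $\frac{1}{2\alpha}\geq \frac{1}{2\alpha_{\max}}$. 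Hence $\|y-\hat{y}\|^2\leq 2\alpha_{\max}\mu\,(h(y;x,w)-h(\hat{y};x,w))$ for every $y$. This one inequality drives all three estimates.

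For \eqref{eq:ine_gene_1}, take $y=x$ and use $h(x;x,w)=0$. This gives $\|\hat{y}-x\|^2\leq 2\alpha_{\max}\mu(-h(\hat{y};x,w))$. The inexactness condition \eqref{eq:general_inexact_prox} says $h(\tilde y)\leq \frac{2}{2+\tau}h(\hat y)$, and both quantities are nonpositive, so $-h(\hat{y})\leq (1+\frac{\tau}{2})(-h(\tilde{y}))$. Substituting yields the first estimate.

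For \eqref{eq:ine_gene_2}, take $y=\tilde{y}$. We get $\|\tilde y-\hat y\|^2\leq 2\alpha_{\max}\mu\,(h(\tilde y)-h(\hat y))$. From the same rearrangement, $h(\tilde y)-h(\hat y)\leq (-h(\tilde y))\left(1+\frac{\tau}{2}-1\right)=\frac{\tau}{2}(-h(\tilde y))$. Therefore $\|\tilde y-\hat y\|^2\leq \alpha_{\max}\mu\tau(-h(\tilde y))$, which is the second estimate.

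For \eqref{eq:ine_gene_3}, apply the triangle inequality $\|\tilde y-x\|\leq\|\tilde y-\hat y\|+\|\hat y-x\|$ and insert the square roots of the two previous bounds:
\begin{equation*}
\|\tilde y-x\|\leq \sqrt{2\alpha_{\max}\mu(-h(\tilde y))}\left(\sqrt{\tfrac{\tau}{2}}+\sqrt{1+\tfrac{\tau}{2}}\right).
\end{equation*}
Squaring gives the third estimate.

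The only delicate point is sign bookkeeping in the inexactness condition: we need $h(\tilde y)\leq 0$, which follows from $h(\hat y)\leq h(x)=0$. Beyond that, the proof only requires stating the strong convexity inequality in the $D$-norm and converting to the Euclidean norm through $\frac1\mu I_n\preceq D$.
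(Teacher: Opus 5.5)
Your proof is correct and is essentially the argument behind the cited result. The paper gives no proof of its own; it defers to \cite[Lemma 9]{Bonettini-Prato-Rebegoldi-2024}, which adapts the argument of \cite[Lemma 8]{Bonettini-Ochs-Prato-Rebegoldi-2023} to the $D$-norm exactly as you do: the strong-convexity bound $h(y;x,w)\geq h(\hat y;x,w)+\frac{1}{2\alpha\mu}\|y-\hat y\|^2$ at $y=x$ and $y=\tilde y$, the rewriting $h(\tilde y)-h(\hat y)\leq \frac{\tau}{2}(-h(\tilde y))$ of the inexactness condition, and the triangle inequality, with $x\in\operatorname{dom}\phi$ implicitly assumed, as in the paper, so that $h(x;x,w)=0$.
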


We now recall the definition of the Kurdyka--\L{}ojasiewicz (KL) inequality \cite{Bolte-etal-2014}. Such a definition is usually given for proper, lower semicontinuous functions; however, here we consider the definition only for differentiable functions, as it is our case of interest.

\begin{definition}\label{def:KL} \cite[Definition 3]{Bolte-etal-2014}
    Let $F:\R^n\rightarrow \R$ be a continuously differentiable function. The function $F$ satisfies the Kurdyka-{\L}ojasiewicz (KL) inequality at the point $x^*\in\R^n$ if there exist $\nu>0$, a neighborhood $U$ of $x^*$, and a continuous concave function $\xi:[0,\nu)\rightarrow [0,+\infty) $ such that $\xi(0) = 0$, $\xi\in C^1((0,\nu))$, $\xi'(s)>0$ for all $s\in(0,\nu)$, and the following inequality holds
    \begin{equation}\label{eq:KL}
        \xi'(F(x)-F(x^*))\|\nabla F(x)\|\geq 1,
    \end{equation}
    for all $x\in U \cap \{y\in\R^n: \ F(x^*)< F(y) < F(x^*)+\nu\}$. If $F$ satisfies the KL inequality for all $x^*\in\R^n$, then $F$ is called a KL function.
\end{definition}
The KL inequality \eqref{eq:KL} holds for several objective functions appearing in classical signal and image processing models, see e.g. \cite{Bolte-etal-2014,Xu-Yin-2013}. Notably, such an inequality holds also for most finite dimensional deep learning models, including those involving functions of the form \eqref{eq:g_sigma} \cite{Castera-et-al-2021}. 

\section{A block-coordinate inexact line-search framework for non-separable optimization}\label{sec:method}
In this section, we introduce Block-PHILA - Block Proximal Heavy-ball Inexact Line-search Algorithm - the novel block-coordinate framework that lies at the core of our proposed PnP approach. First, we describe Block-PHILA (Section \ref{subsec:framework}); then we compare it with other block-coordinate approaches already existing in the literature (Section \ref{subsec:related_works}); finally, we detail {its application to the PnP framework (Section \ref{sec:Plug-And-Play})}.

\subsection{The proposed framework}\label{subsec:framework}
Block-PHILA is a block-coordinate framework suitable for problems of the form \eqref{eq:problem} under the 
blanket assumptions stated in Assumption \ref{ass1}. 
\begin{assumptions}\label{ass1}
The functions $f$ and $\phi$ appearing in \eqref{eq:problem} satisfy the following conditions.
    \begin{itemize}
        \item[(i)] The function $f:\R^n\rightarrow \R$ is continuously differentiable.
        \item[(ii)] Given any matrix $U=[U_1,\ldots,U_N]$ defined as in Definition \ref{def:U}, the function $\phi:\R^n\rightarrow \R\cup \{\infty\}$ is convex along coordinates, i.e., the function 
        \begin{equation}\label{eq:phix}
            \phi_i^x:\R^{n_i}\rightarrow \R\cup\{\infty\}, \quad \phi_i^x(z) = \phi(x+U_i(z-U_i^Tx)), \quad \forall \ z\in\R^{n_i}, 
        \end{equation}
        is convex for all $x\in\R^n$ and $i=1,\ldots,N$.
        \item[(iii)] The function $F=f+\phi$ is bounded from below, i.e., there exists $F_{low}\in\R$ such that $F\geq F_{low}$.
    \end{itemize}
\end{assumptions}

\textcolor{black}{
\begin{remark}\label{remark_assumption1}
Optimization problems of the form \eqref{eq:problem} satisfying Assumption \ref{ass1} are frequently encountered in imaging inverse problems, usually expressed as the minimization of a function given by
\begin{equation}\label{eq:minDR}
F(x)=\mathcal{D}(x) + \varrho \mathcal{R}(x), 
\end{equation}
where $\mathcal{D}:\R^n\rightarrow \R$ and $\mathcal{R}:\R^n\rightarrow \R$ are continuously differentiable and bounded from below functions, the former representing a data fidelity term and the latter being a regularizer, and $\varrho\in\R_+$ is the regularization parameter.\\
We note that properties (i) and (ii) naturally allow for the inclusion of optimization models where the data fidelity $\mathcal{D}$ is both continuously differentiable and convex along the coordinates on $\R^n$. For instance, the regularized function $F$ in \eqref{eq:minDR} with a least squares data fidelity of the form
$$
\mathcal{D}(x)=\frac{1}{2}\|Ax-b\|^2,
$$
where $A\in\R^{m\times n}$ and $b\in\R^m$, satisfies Assumption \ref{ass1} for any matrix $U=[U_1,\ldots,U_N]$ by setting either $f(x)=\mathcal{R}(x)$ and $\phi(x)=\frac{1}{2}\|Ax-b\|^2$, or alternatively $f(x)=F(x)$ and $\phi(x)\equiv 0$. Least squares data fidelity functions frequently appear in image restoration, in the presence of either Gaussian noise \cite{Chambolle-Pock-2016,hurault2022gradient} or Poisson noise, in particular when approximating the generalized Kullback-Leibler divergence with its second-order Taylor expansion \cite{Burger_2014,Rebegoldi-Calatroni-2022,Ramani-et-al-2012,Sawatzky2011}.\\
In spite of property (ii), our framework can even include models where the data fidelity is continuously differentiable but nonconvex along the coordinates. In this case, for any fixed matrix $U=[U_1,\ldots,U_N]$, we can circumvent the absence of convexity by moving both the data fidelity term and the regularizer inside the term $f$, i.e., by setting $f(x)=F(x)$ and $\phi(x)\equiv 0$. As an example, we consider the regularized function $F$ in \eqref{eq:minDR} with data fidelity given by
\begin{equation}\label{eq:cauchy_fidelity} 
\mathcal{D}(x) = \sum_{i=1}^n\log(\gamma^2+((Ax)_i-b_i)^2),
\end{equation}
where $A\in\R^{n\times n}$, $b\in\R^n$ and $\gamma>0$. Such a data fidelity is nonconvex and typically appears in image deblurring in the presence of Cauchy noise \cite{Parenti-etal-2026,Sciacchitano-et-al-2015}.\\
Finally, we note that similar non-separable optimization models can be found in several other applications, such as matrix factorization, wireless resource allocation or the training of machine learning models, see e.g. \cite{chorobura2023random} and references therein.
\end{remark}
}

We start by summarizing the steps of the proposed framework. At each iteration $k\in\N$, Block-PHILA cyclically selects the index $i_k\in\{1,\ldots,N\}$, two parameters $\ak>0$, $\bk>0$, and a symmetric positive definite matrix $D_k\in\R^{n_{i_k}\times n_{i_k}}$. Starting from the current iterate $\xk$, the algorithm performs a gradient step on $f$ restricted to the $\ik-$th block of coordinates, possibly equipped with an inertial term combining the two previous updates on that block. Then, a proximal step is performed on the function $\phi$ restricted to the subspace generated by the columns of $\Uk$. Note that $\alpha_k$ and $D_k$ define the variable metric of the proximal--gradient operator, whereas $\bk$ scales the inertial term. Furthermore, such a block proximal--gradient point can be approximately computed according to a well-posed inexactness condition, which is implementable in some cases of interest. Once the proximal--gradient point is computed, the new iterate $x_{k+1}\in\R^n$ is detected by means of a line-search procedure that enforces the sufficient decrease of a suitable merit function along the search direction.

We now report Block-PHILA in full detail in Algorithm \ref{alg:block-VMILA}.

\begin{algorithm}\caption{Block Proximal Heavy-ball Inexact Line-search Algorithm (Block-PHILA)}
Choose $\xo\in\R^n$, $x_{-1} = \ldots = x_{-N}=\xo$, $0<\alpha_{\min}\leq \alpha_{\max}$, $\beta_{\max}>0$, $\mu>0$, $\tau\geq 0$, $\gamma>0$, $\delta,\textcolor{black}{\vartheta} \in(0,1)$.\\
{\sc For} $k=0,1,\ldots$
\begin{enumerate}
    \item[] {\hspace{-5mm}\sc Step 1.} Set $\ik=\mod(k,N)+1$.
    \item[] {\hspace{-5mm}\sc Step 2.} Choose $\ak\in [\alpha_{\min},\alpha_{\max}]$, $\beta_k\in [0,\beta_{\max}]$, $\Dk\in \mathcal{S}_{++}(n_{i_k})$ such that $\frac{1}{\mu}I_{n_{i_k}}\preceq \Dk\preceq \mu I_{n_{i_k}}$. 
    \item[] {\hspace{-5mm}\sc Step 3.} Let $\phik(z) = \phi(\xk + \Uk(z-\Uk^T\xk))$ and compute 
    \begin{equation}\label{eq:yk}
         \yk \approx_{\tau} \prox_{\ak \phik}^{\Dk}(\Uk^T(\xk+\beta_k(\xk-x_{k-N}))-\ak \Dk^{-1}\Uk^T\nabla \ff(\xk)).
     \end{equation}
     \item[] {\hspace{-5mm}\sc Step 4.} Set $\dk = \yk-\Uk^T\xk $ and compute
     \begin{align}\label{eq:Deltak}
     h_k(\yk) &= \langle \Uk^T\nabla \ff(\xk)-\frac{\beta_k}{\alpha_k}D_k\Uk^T(x_k-x_{k-N})
     ,\dk\rangle \nonumber\\
     &+\frac{1}{2\ak}\|\dk\|^2_{\Dk}+\phik(\yk)-\phik(\Uk^T\xk).
     \end{align}
    \item[] {\hspace{-5mm}\sc Step 5.} 
    Compute the smallest non-negative integer $\mk$ such that
    \begin{equation}\label{eq:armijo}
        F(\xk + \delta^{\mk}\Uk \dk)+\frac{\gamma}{2}\|\delta^{m_k}d_k\|^2 \leq F(\xk)+\frac{\gamma}{2}\|U_{i_k}^T(x_k-x_{k-N})\|^2+\textcolor{black}{\vartheta}\delta^{\mk}h_k(\yk),
    \end{equation}
    and set $\lambda_k = \delta^{m_k}$.
    \item[] {\hspace{-5mm}\sc Step 6.} Compute
    \begin{equation}\label{eq:update}
        \xkk = \xk + \begin{cases}
            \Uk \dk, \quad &\text{if }F(\xk+ \Uk\dk)+\frac{\gamma}{2}\|d_k\|^2 < F(\xk+\lambda_k\Uk \dk)+\frac{\gamma}{2}\lambda_k^2\|d_k\|^2\\
            \lambda_k\Uk \dk, \quad &\text{otherwise}.
        \end{cases}
    \end{equation}
\end{enumerate}
\label{alg:block-VMILA}
\end{algorithm}

Let us describe Block-PHILA step-by-step. Prior to the beginning of the iterative procedure, the framework requires the user to select the following parameters:
\begin{itemize}
    \item  $\xo\in\R^n$ and $x_{-1}=\ldots=x_{-N}=\xo$ as the $N+1$ initial guesses;
    \item $0<\alpha_{\min}\leq \alpha_{\max}$ as the lower and upper bounds, respectively, for the steplengths $\{\alpha_k\}_{k\in\N}$;
    \item $\beta_{\max}>0$ defining the interval $[0,\beta_{\max}]$ constraining the inertial parameters $\{\beta_k\}_{k\in\N}$;
    \item $\mu>0$ defining the space to which the scaling matrices $\{D_k\}_{k\in\N}$ belong to, i.e., the space of all symmetric, positive definite matrices whose eigenvalues are in the interval $[\frac{1}{\mu},\mu]$;
    \item $\tau\geq 0$ controlling the level of accuracy in the computation of the inexact proximal point;
    \item $\gamma>0$, $\delta\in(0,1)$, $\textcolor{black}{\vartheta}\in(0,1)$ as the line-search parameters.
\end{itemize}

At {\sc Steps 1-2}, we cyclically select the block index $i_k=\mod(k,N)+1$, and compute the steplength $\alpha_k\in [\alpha_{\min},\alpha_{\max}]$, the inertial parameter $\beta_k\in[0,\beta_{\max}]$ and the scaling matrix $D_k\in\mathcal{S}_{++}(n_{i_k})$ with $\frac{1}{\mu}I_{n_{i_k}}\preceq \Dk\preceq \mu I_{n_{i_k}}$. These three parameters can be computed according to any chosen updating rule and are intended to make the $k-$th block step faster than standard implementations employing either fixed parameters or local estimates of the Lipschitz constant. \textcolor{black}{Note that, in practice, $D_k$ should be chosen as a diagonal matrix, in order to keep the computational cost per iteration low.}

{\sc Step 3} aims at computing an inexact proximal--gradient step with respect to the $i_k-$th block of coordinates $x^{(i_k)} =
U_{i_k}^Tx$, while fixing the other blocks as the ones of the current iterate $\xk$. In detail, we denote with $\phik:\R^{n_{i_k}}\rightarrow \R\cup\{\infty\}$ the restriction of the function $\phi$ to the subspace generated by the columns of $\Uk$, i.e.,
\begin{equation}\label{eq:phik}
    \phik(y) = \phi(\xk+ \Uk(y-\Uk^T\xk)), \quad \forall \ y\in\R^{n_{i_k}}. 
\end{equation}
We consider the block proximal--gradient point $\hat{y}_k\in\R^{n_{i_k}}$ with parameters $\alpha_k,\beta_k,D_k$ defined as
\begin{equation}\label{eq:hyk}
    \hyk = \prox_{\ak \phik}^{\Dk}(\Uk^T(\xk+\beta_k(\xk-x_{k-N}))-\ak \Dk^{-1}\Uk^T\nabla \ff(\xk)).
\end{equation}
Note that the matrix $\Dk^{-1}$ has the role of scaling the term $-\Uk^T\nabla f(\xk)$, namely the $\ik-$th block of coordinates of the negative gradient at $\xk$, whereas $\beta_k$ scales the inertial term $\Uk^T(\xk-x_{k-N})$. By Definition \ref{def:proximal_operator} of proximal operator, the point $\hat{y}_k$ can be characterized as follows
\begin{align}
\hyk &= \underset{y\in\R^{n_{i_k}}}{\operatorname{argmin}} \ \frac{1}{2}\|y- \Uk^T(\xk+\beta_k(\xk-x_{k-N}))+\ak \Dk^{-1}\Uk^T\nabla \ff(\xk)\|_{\Dk}^2 + \ak \phik (y)\nonumber\\
&= \underset{y\in\R^{n_{i_k}}}{\operatorname{argmin}} \ \langle  \Uk^T\nabla f(\xk) - \frac{\bk}{\ak}D_k\Uk^T(\xk-x_{k-N}),y-\Uk^Tx_k\rangle+\frac{1}{2\ak}\|y-\Uk^T\xk\|^2_{\Dk}+\phik(y)\label{eq:almost_hki}\\
&=\underset{y\in\R^{n_{i_k}}}{\operatorname{argmin}} \ h_k(y),\nonumber
\end{align}
where $h_k: \R^{n_{i_k}}\rightarrow \mathbb{R}\cup\{\infty\}$ is the function defined as
\begin{align}\label{eq:hki}
   h_k(y)&=\langle \Uk^T\nabla f(\xk)-\frac{\bk}{\ak}\Dk\Uk^T(\xk-x_{k-N})
     ,y-U_{i_k}^Tx_k\rangle\nonumber\\
     &+\frac{1}{2\ak}\|y-U_{i_k}^Tx_k\|^2_{\Dk}+\phik(y)-\phik(\Uk^T\xk), \quad \forall \ y\in\R^{n_{i_k}}.
\end{align}
Note that $h_k$ is defined by shifting the function appearing in \eqref{eq:almost_hki} by the constant $-\phik(U_{i_k}^T\xk)$, so that $h_k(\Uk^T \xk) = 0$ and $h_k(\hyk)\leq h_k(\Uk^T \xk)=0$. Then, given $\tau>0$, Block-PHILA computes an inexact inertial proximal--gradient point $\tilde{y}_k\in\R^{n_i}$ complying with the following inexactness condition
\begin{equation}\label{eq:tilde_yki}
h_k(\tilde{y}_{k})\leq \left(\frac{2}{2+\tau}\right)h_k(\hyk).    
\end{equation}
Note that the search of a point $\tilde{y}_k$ complying with \eqref{eq:tilde_yki} is feasible since $h_k(\hyk)\leq 0$. If $\tau=0$, then $\tilde{y}_k=\hyk$, meaning that Block-PHILA computes the proximal--gradient point in an exact manner; viceversa, if $\tau>0$, then Block-PHILA computes an inexact point which gets closer and closer to $\yk$ as $\tau$ tends to zero.

In {\sc Step 4}, Block-PHILA defines the search direction $\dk = \yk - \Uk^T \xk$ and keeps track of the value $h_k(\yk)$. Then, {\sc Step 5} performs a line-search along the direction $\Uk \dk$ in order to satisfy an appropriate Armijo-like condition. More precisely, Block-PHILA computes the parameter $\lambda_k = \delta^{\mk}\in (0,1]$, where $\mk$ is the smallest non-negative integer such that
\begin{equation*}
F(\xk + \delta^{\mk}\Uk \dk)+\frac{\gamma}{2}\|\delta^{m_k}d_k\|^2 \leq F(\xk)+\frac{\gamma}{2}\|U_{i_k}^T(x_k-x_{k-N})\|^2+\textcolor{black}{\vartheta} \delta^{\mk}h_k(\yk).
\end{equation*}
In general, the above condition does not imply the decrease of the objective function $F$ from one iteration to the other; nonetheless, since $h_k(\yk)\leq 0$, the above condition forces the quantity $F(\xk + \lambda_k\Uk \dk)+\frac{\gamma}{2}\lambda_k^2\| d_k\|^2$ to be smaller than $F(\xk)+\frac{\gamma}{2}\|U_{i_k}^T(x_k-x_{k-N})\|^2$. We will see in Section \ref{subsec:convergence_analysis} (precisely in Lemma \ref{lem:descent}) that {\sc Step 5} is actually equivalent to imposing the sufficient decrease of a suitable merit function $\Psi$ along the sequence $\{(\xk,\xkm,\ldots,x_{k-N})\}_{k\in\N}$ of $N+1$ successive iterates.

Finally, in {\sc Step 6}, the new iterate $\xkk$ is set as either the convex combination $\xk+\lambda_k \Uk \dk$ or the point $\xk+\Uk \dk$, depending on whether the quantity $F(\xk + \lambda_k\Uk \dk)+\frac{\gamma}{2}\lambda_k^2\| d_k\|^2$ appearing in the Armijo-like condition is smaller than the quantity $F(\xk + \Uk \dk)+\frac{\gamma}{2}\| d_k\|^2$ computed for the first trial point of the backtracking procedure. \textcolor{black}{We note that {\sc Step 6} is purely technical and required in order to ensure the convergence of the algorithm. In particular, this final step is needed in order to ensure that one of the hypotheses of Theorem \ref{thm:abstract_convergence} (namely equation \eqref{eq:H2}) holds for Block-PHILA. We refer the reader to the proof of Corollary \ref{cor:convergence_iterates} for a clearer understanding of this issue.}

\subsection{\textcolor{black}{Related block-coordinate approaches}}\label{subsec:related_works}
The Block-PHILA method can be viewed as an alternating block-coor\-dinate version of the Proximal Heavy-ball Inexact Line-search Algorithm (PHILA) proposed in \cite{Bonettini-Prato-Rebegoldi-2024}. Specifically, at each iteration, Block-PHILA applies a PHILA step to one block of coordinates
$x^{(i)}$ at a time while keeping all other blocks fixed. Extending PHILA to the block-coordinate setting is non-trivial, as it requires a careful design of the inertial term across the blocks and the suitable definition of a merit function that decreases throughout the iterations. Regarding the latter, the decreasing merit function used to prove convergence in PHILA coincides with the one employed in the algorithm's line-search. In contrast, the definition of the merit function in Block-PHILA, while still relying on the line-search inequality, must be adapted in order to take into account the block-coordinate structure of the algorithm (see Lemma \ref{lem:descent}).

Beyond Block-PHILA, several other block-coordinate proximal--gradient methods have been proposed for minimizing a possibly non-convex objective function of the form \eqref{eq:problem}, accelerated either by inertial terms or by variable metrics \cite{Bonettini-Prato-Rebegoldi-2018,Chouzenoux-Pesquet-Repetti-2016,Frankel-Garrigos-Peypouquet-2015,Gur-Sabach-shtern-2023,Ochs-2019,Pock-Sabach-2018,Rebegoldi-2024,Xu-Yin-2013,Xu-Yin-2017}.  
    \textcolor{black}{More specifically, the methods in \cite{Bonettini-Prato-Rebegoldi-2018,Chouzenoux-Pesquet-Repetti-2016,Frankel-Garrigos-Peypouquet-2015,Rebegoldi-2024} employ variable-metric strategies, whereas those in \cite{Gur-Sabach-shtern-2023,Pock-Sabach-2018,Xu-Yin-2013,Xu-Yin-2017} rely on inertial extrapolation. To the best of our knowledge, among the cited works, only the framework proposed in \cite{Ochs-2019} allows inertial and variable-metric strategies to be combined. In addition, all the cited approaches, including \cite{Ochs-2019}, are tailored for problems of the form \eqref{eq:problem} where  
    the function $\phi$ consists of a sum of separable and possibly non differentiable components.}
    In contrast, Block-PHILA allows the combination of inertial and variable metric strategies, and is designed to handle objective functions of the form \eqref{eq:problem} where $\phi$ is differentiable and possibly non-separable. Such an assumption on $\phi$ is crucial in practical applications such as the Plug-and-Play framework, where the optimization problem typically involves the sum of two differentiable but non-separable terms.   
Further details on how Block-PHILA can be employed in Plug-and-Play scenarios are provided in Section \ref{sec:Plug-And-Play}. 

On the other hand, there are few  block-coordinate descent methods 
for composite objective functions as in \eqref{eq:problem} where the second term can be supposed non-separable \cite{Aberdam-etal-2021,chorobura2023random,Grishchenko-etal-2021,Hanzely-etal-2018,Latafat-etal-2022}. In \cite{Aberdam-etal-2021}, the authors study the minimization of the sum of a quadratic function and a non-smooth, non-separable function and propose a coordinate gradient descent–type method based on the forward-backward envelope to smooth the original problem. Chorobura and Necoara \cite{chorobura2023random} propose a coordinate proximal--gradient algorithm where, for problem \eqref{eq:problem}, the algorithm updates the variables by performing a descent step using selected components of the gradient of $f$, followed by a proximal step applied to a function restricted to the corresponding subspace, similarly to the one in \eqref{eq:phik}. Furthermore, given the problem \eqref{eq:problem}, \cite{Grishchenko-etal-2021,Hanzely-etal-2018} study proximal coordinate descent whose general iteration reads as
\begin{equation}\label{eq:update_references}
    {x}_{k+1} \in \prox_{\alpha \phi}\bigl(\mathcal{C}\bigl(x_k - \alpha \nabla f(x_k)\bigr)\bigr),
\end{equation}
where $\mathcal{C}(\cdot)$ is a  map defined by the randomly chosen subspace at the current iteration. 
The algorithm proposed in \cite{Latafat-etal-2022}, tailored for problem \eqref{eq:problem} with general non-convex $\phi$, is similar to that in \eqref{eq:update_references}, but  $\mathcal{C}(\cdot)$ is simply the identity map; here only a proper subset of indices $I_{k+1} \subseteq \{1,\ldots,N\}$ is selected and the corresponding block of coordinates is updated by means of \eqref{eq:update_references}, leaving the remaining coordinates unchanged by setting $x_{k+1}^{(i)} = x_k^{(i)}$, $i\in \{1,\ldots,N\}\setminus I_{k+1}$. 
It is also worth noting that \cite{Grishchenko-etal-2021,Hanzely-etal-2018} use only a sketch of the gradient of $f$ restricted to the selected subspace, while \cite{Latafat-etal-2022} assumes that $f$ itself is separable. 

\textcolor{black}{We remark that none of the aforementioned approaches for non-separable optimization adopts a line-search strategy for selecting the steplength. Instead, their convergence guarantees typically require the steplength to satisfy prescribed upper bounds, which may depend on global problem constants and can therefore be conservative or difficult to estimate in practice. By contrast, Block-PHILA uses an Armijo-type backtracking procedure to adapt the actual steplength, thereby potentially allowing the algorithm to accept larger steps than those prescribed by conservative global convergence conditions and avoiding the need to know or accurately estimate such constants in advance. The additional flexibility of the line-search comes at the cost of potentially performing multiple objective-function evaluations at each iteration. Nevertheless, the associated computational overhead is limited by the fact that the proximal point and the search direction are computed only once: during backtracking, Block-PHILA merely evaluates the objective function at the trial points, without recomputing the proximal update. Thus, the line-search introduces a trade-off between additional function evaluations and a more adaptive, potentially less conservative, choice of the steplength. In practice, the number of backtracking reductions is typically small, as will also be highlighted in the numerical experiments.}

Furthermore, the aforementioned algorithms  do not consider inertial and/or variable metric acceleration strategies, and they do not allow for an inexact computation of the proximal point. Moreover in \cite{Aberdam-etal-2021,Grishchenko-etal-2021,Hanzely-etal-2018,Latafat-etal-2022}, the proximal update is not computed coordinate-wise, but instead requires computing a block of coordinates of the full proximal point, by implicitly assuming that the function $\phi$ in \eqref{eq:problem}  admits a low-complexity proximal mapping.

To the best of our knowledge, Block-PHILA is the first block-coordinate proximal--gradient method designed for non-convex and non-separable problems that offers provable convergence guarantees, includes a line-search strategy to adaptively select the steplength without impractical constraints, and can potentially be accelerated by jointly using inertial steps and a variable metric underlying the iterations. 

\subsection{Application to the \textcolor{black}{GS} Plug-and-Play framework}\label{sec:Plug-And-Play}
This section is devoted to detail how Algorithm \ref{alg:block-VMILA} can be applied to image restoration problems of the  form
\begin{equation}\label{eq:PnP_framework}
    \underset{x\in\R^n}{\operatorname{argmin}} \ 
    \textcolor{black}{F(x)\equiv\mathcal{D}(x)+\varrho}g_\sigma(x),
\end{equation}
where  
\textcolor{black}{$\mathcal{D}:\R^n\rightarrow \R$ is a continuously differentiable and bounded from below }data fidelity term, $g_{\sigma}:\R^n\rightarrow \R$ is defined in \eqref{eq:g_sigma}, and $\textcolor{black}{\varrho}>0$ is a regularization parameter. 

\textcolor{black}{Problem \eqref{eq:PnP_framework} can always be reformulated as problem \eqref{eq:problem} under Assumption \ref{ass1} by setting 
\begin{equation}\label{eq:PnP_splitting_1}
    \begin{cases}
        \phi(x) = 0\\
        f(x) = \mathcal{D}(x)+\varrho g_\sigma(x).
    \end{cases}
\end{equation}
If $\mathcal{D}$ is also convex along the coordinates on $\R^n$, we can consider the alternative splitting
\begin{equation}\label{eq:PnP_splitting_2}
\begin{cases}
        \phi(x) = \mathcal{D}(x)\\
        f(x) = \varrho g_\sigma(x).
    \end{cases}    
\end{equation}
In both splittings, the computation of $\nabla g_\sigma(x_k)$ restricted to each block in \eqref{eq:yk} is required.
} 

\textcolor{black}{Regarding the choice of the blocks}, we consider a collection of decomposition submatrices $\{U_i\}_{i=1}^N$ that partition the image into blocks of contiguous pixels. For example, an image can be decomposed into four non-overlapping quadrants, each representing one block of coordinates. 

In the following, we show how the computation of 
\textcolor{black}{the restricted block-gradient $U_{i_k}^T\nabla g_\sigma(x_k)$} can be efficiently handled.

\subsubsection{Block GS denoiser}
In this section, we describe how it is possible to reduce the 
memory overhead required to perform the computation of $\Uk^T\nabla g_\sigma(\xk)$ through the derivation of a restricted block version of the GS denoiser. More in detail, 
in view of \eqref{eq:denoising_step},  $\nabla g_\sigma$ depends on $J_{N_{\sigma}}(x)$, whose computation requires storing the entire computational graph in order to perform the backward pass. The memory needed to perform this intermediate step depends on the size of the images, thus limiting the usability of GS denoisers in limited resources scenarios, for example on a laptop or on a smartphone. For this reason, we introduce a variant of the GS denoisers, named block GS denoisers (BGS), in order to perform the denoising step on a single block of the image with a reduced memory footprint on the GPU, avoiding to compute the entire $J_{N_{\sigma}}(x)$ and the subsequent extraction of the rows associated to the selected block. 
This construction is crucial for the implementation of Block-PHILA in the considered PnP framework.

We recall the concept of receptive field for a convolutional neural network in order to derive our BGS denoisers. In general, the receptive field refers to the region of the input that influences the single pixel in the output. More specifically, the receptive field of a simple convolutional layer is equal to its kernel size \cite{Luo-etal-2016}. The theoretical receptive field of a convolutional neural network (CNN) can be computed by considering how the size, stride, and padding of the kernel of each layer combine to determine the size of the input region that affects a specific output location. Mathematically, this means that a single pixel in the output does not depend on the pixels outside of its receptive field. This independence is reflected also in the partial derivatives of the neural network.

In our setting, we consider the function $N_{\sigma}$ defined as a convolutional neural network, specifically employing a UNet architecture as in \cite{hurault2022gradient}. In general, CNN architectures enable the network to handle images of varying sizes without requiring modification to the model parameters or structure. Let us consider $i_k\in\{1,\ldots,N\}$ as a block selection index and $\hat{i}\in\{1,\ldots,N\}$. Computationally, if we are interested in the partial derivatives of $N_{\sigma}$ in the $\hat{i}$-th component there are two cases 
  \begin{itemize}
    \item $\frac{\partial N_{\sigma}(x)_{r}}{\partial x_j}=0$, if the $j-$th pixel of $x$ 
    is not in the receptive field associated to the pixel $r$ in the block $\hat{i}$;
    \item $\frac{\partial N_{\sigma}(x)_{r}}{\partial x_j}$ \textcolor{black}{may be nonzero otherwise}.
  \end{itemize}
For each block index $i_k$ we can define the restricted neural network acting on a smaller patch of the image, namely $\widetilde{N}^{i_k}_{\sigma}:\R^{n_i+o_k}\rightarrow\R^{n_i+o_k}$, where $o_k$ denotes the number of additional pixels considered in the padded block of coordinates, such that the following condition holds  \begin{equation}\label{eq:condition_restricted}
      U_{i_k}^T N_{\sigma}(x)=U_{i_k}^T\bar{U}_{i_k}\widetilde{N}^{i_k}_{\sigma}(\bar{U}^T_{i_k}x),
  \end{equation}
where $\bar{U}_{i_k}$ denotes the masking matrix which considers the indexes of the pixels within the receptive field associated to the pixels in $I_{i_k}$. In practice, this means that extracting a block of coordinates of the neural network output $N_{\sigma}(x)$ is equivalent to extracting the same block of the restricted neural network output $\widetilde{N}_\sigma^{i_k}(\bar{U}_{i_k}^Tx)$. Now, considering the following potential function:

\begin{equation}
    \widetilde{g}^{i_k}_{\sigma}(x) = \frac{1}{2}\|\bar{U}^T_{i_k} x - \widetilde{N}^{i_k}_{\sigma}(\bar{U}^T_{i_k}x)\|^2
\end{equation}
we can compute its gradient
\begin{equation}
    \nabla  \widetilde{g}^{i_k}_{\sigma}(x)  =  \bar{U}_{i_k}\left(\bar{U}^T_{i_k} x - \widetilde{N}^{i_k}_{\sigma}(\bar{U}^T_{i_k}x)- J_{\widetilde{N}^{i_k}_{\sigma}}(\bar{U}^T_{i_k}x)^T
    (\bar{U}^T_{i_k} x - \widetilde{N}^{i_k}_{\sigma}(\bar{U}^T_{i_k}x))\right).
\end{equation}
We point out that the computation of $\widetilde{N}^{i_k}_{\sigma}$ and its associated Jacobian  $J_{\widetilde{N}^{i_k}_{\sigma}}$ require less memory since its input belongs to a smaller space. Furthermore, the $i_k-$th block of coordinates of $\nabla  \widetilde{g}^{i_k}_{\sigma}(x)$ writes as
\begin{align}\label{eq:nabla_tildeg_intermediate}
\Uk^T\nabla  \widetilde{g}^{i_k}_{\sigma}(x)&=\Uk^T\bar{U}_{i_k}\left(\bar{U}^T_{i_k} x - \widetilde{N}^{i_k}_{\sigma}(\bar{U}^T_{i_k}x)- J_{\widetilde{N}^{i_k}_{\sigma}}(\bar{U}^T_{i_k}x)^T
    (\bar{U}^T_{i_k} x - \widetilde{N}^{i_k}_{\sigma}(\bar{U}^T_{i_k}x))\right)\nonumber\\
&=\Uk^T\left(x-N_{\sigma}(x)-\bar{U}_{i_k}J_{\widetilde{N}^{i_k}_{\sigma}}(\bar{U}^T_{i_k}x)^T
    (\bar{U}^T_{i_k} x - \widetilde{N}^{i_k}_{\sigma}(\bar{U}^T_{i_k}x))\right),
\end{align}
where we used \eqref{eq:condition_restricted} and property $U_{i_k}^T\bar{U}_{i_k}\bar{U}_{i_k}^T = U_{i_k}^T$ of the masking matrices. Finally, noting that
\begin{equation}\label{eq:nabla_tildeg_fact}
U_{i_k}^T \bar{U}_{i_k} J_{\widetilde{N}^{i_k}_{\sigma}}(\bar{U}^T_{i_k}x)^T
    (\bar{U}^T_{i_k} x - \widetilde{N}^{i_k}_{\sigma}(\bar{U}^T_{i_k}x))=U_{i_k}^T J_{N_{\sigma}}(x)^T(x - N_\sigma(x)),  
\end{equation}
and plugging \eqref{eq:nabla_tildeg_fact} into \eqref{eq:nabla_tildeg_intermediate}, we conclude that 
\begin{equation}\label{eq:block_gradient_step}
U_{i_k}^T \nabla  \widetilde{g}^{i_k}_{\sigma}(x) = U_{i_k}^T \nabla  g_{\sigma}(x).
\end{equation}
In Figure \ref{fig:cheapgradient_diagram}, we present a diagram illustrating the dimensions of the tensors involved in the gradient computation under the proposed approach, considering a set of masking matrices that select one image quadrant at a time.

\textcolor{black}{Finally, we emphasize that the arguments presented in this section for efficiently computing the GS denoising step on a single block rely on the fact that each output pixel depends only on a finite neighborhood of input pixels. They therefore apply to CNNs and, more generally, to architectures with a bounded receptive field. Unlike CNNs, Transformers possess a global receptive field, meaning the output leverages the entire global context. For this type of architectures with global dependencies, the exact evaluation of a single block may require processing the entire image, in this case the proposed padding strategy does not provide the same computational advantage. The GS denoiser proposed in \cite{hurault2022gradient} and used in the numerical experiments of Section~\ref{sec:numerical_experiments} is based on a convolutional architecture and therefore falls within the setting considered here. A related empirical padding strategy for block-wise denoising was also considered in \cite{Sun-etal-2020}, where the employed neural network denoiser is likewise CNN-based.}

 \begin{figure}
    \centering
    \begin{tikzpicture}[scale=0.3]
    \draw[help lines] (-5,-5) grid (5,5);
    \node at (0, -6) {$x$};
        \fill[green!30, semitransparent] (-5,-1) -- (-5, 5) -- (1,5) -- (1,-1);
        \fill[cyan!20, nearly opaque] (-5,0) -- (-5, 5) -- (0,5) -- (0,0);
        \draw[->] (5,0)   --  (8,0) node[midway,above] () {{$\widetilde{N}^{i_k}_{\sigma}$}};
        \begin{scope}[shift={(11,0)}]
            \draw[help lines] (-3,-3) grid (3,3);
            \node at (0, -4.5) {$\widetilde{N}^{i_k}_{\sigma}(\bar{U}^T_{i_k}x)$};
        \end{scope}
        \draw[->] (14,0)   --  (17,0) node[midway,above] () {{$\nabla\widetilde{g}^{i_k}_{\sigma}$}};
        \begin{scope}[shift={(20,0)}]
            \draw[help lines] (-3,-3) grid (3,3);
            \fill[red!30, semitransparent] (-3,-3) -- (-3, 3) -- (3,3) -- (3,-3);
            \fill[cyan!20, nearly opaque] (-3,-2) -- (-3, 3) -- (2,3) -- (2,-2);
            \node at (0, -4.5) {$J_{\widetilde{N}^{i_k}_{\sigma}}(\bar{U}^T_{i_k}x)$};
        \end{scope}
        \draw[->] (23,0)   --  (27,0) node[midway,above] () {$U_{i_k}^T$};
        \begin{scope}[shift={(27,-2.5)}]
            \draw[help lines] (0,0) grid (5,5);
            \node at (2.5, -1.5) {$U_{i_k}^T \nabla  \widetilde{g}^{i_k}_{\sigma}(x)$};
        \end{scope}
    \end{tikzpicture}
    \caption{Action of the block GS denoiser, illustrated on a $10\times 10$ image decomposed into four $5\times 5$ quadrants, $i_k$ being the index of the top-left one. Left: the input image $x$; the cyan area is the block  selected by $U_{i_k}$, while the green contour collects the additional pixels belonging to the receptive field associated to the patch, so that the cyan and green areas together form the padded block selected by $\bar{U}_{i_k}$. Center: the restricted network $\widetilde{N}^{i_k}_{\sigma}$ is evaluated on the padded block only. Right: the gradient $\nabla \widetilde{g}^{i_k}_{\sigma}$  is computed exploiting the Jacobian $J_{\widetilde{N}^{i_k}_{\sigma}}(\bar{U}^T_{i_k}x)$ on the whole padded block (red area), and the block of interest (cyan area) is finally extracted by means of $U_{i_k}^T$, which returns  $U_{i_k}^T \nabla  \widetilde{g}^{i_k}_{\sigma}(x)$ that corresponds to $U_{i_k}^T\nabla g_{\sigma}(x)$, as described in \eqref{eq:nabla_tildeg_fact}.}
    \label{fig:cheapgradient_diagram}
\end{figure}

\section{Convergence analysis of Block-PHILA}\label{subsec:convergence_analysis}
In this section, we carry out the convergence analysis of Algorithm \ref{alg:block-VMILA}. In the first part, we show a sublinear convergence rate and the stationarity of limit points under Assumption \ref{ass1} and some smoothness requirements on both $f$ and $\phi$ (Section \ref{subsubsec:convergence_part1}). Then, we prove the convergence of the whole sequence to a stationary point and related convergence rates, by additionally assuming that some appropriate surrogate functions of $F$ satisfy the KL property on their domains (Section \ref{subsubsec:convergence_part2}).

\subsection{Stationarity of limit points and sublinear rate}\label{subsubsec:convergence_part1}
We start by deriving the following lemma, which will be frequently used in the convergence analysis of Algorithm \ref{alg:block-VMILA}.

\begin{lemma}\label{lem:technical_1}
Suppose Assumption \ref{ass1} holds. For all $k\in\N$, the following inequalities hold:
\begin{align}
    \|\hyk-\Uk^T\xk\|^2&\leq 2\alpha_{\max}\mu\left(1+\frac{\tau}{2}\right)(-h_k(\yk))\label{eq:ine_phila_1}\\
    \|\yk-\hyk\|^2&\leq \alpha_{\max} \mu\tau(-h_k(\yk))\label{eq:ine_phila_2}\\
    \|\dk\|^2=\|\yk-\Uk^T\xk\|^2&\leq 2\alpha_{\max}\mu\left(\sqrt{1+\frac{\tau}{2}}+\sqrt{\frac{\tau}{2}}\right)^2(-h_k(\yk)).\label{eq:ine_phila_3}
\end{align}
\end{lemma}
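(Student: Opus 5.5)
The plan is to obtain Lemma \ref{lem:technical_1} as a direct instance of Lemma \ref{lem:general_inequalities}, applied to the block subproblem solved at iteration $k$. We work in the reduced space $\R^{n_{i_k}}$. There we need to identify a smooth function, a convex function, a base point $x$ and an inertial point $w$ such that the function $h(\cdot;x,w)$ in \eqref{eq:general_metric_function} coincides with $h_k$ in \eqref{eq:hki}.

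\textbf{Choice of data.} We take the convex part to be $\phik$. This is proper, lower semicontinuous and convex on $\R^{n_{i_k}}$ by Assumption \ref{ass1}(ii), provided it is proper and lsc; in the setting of the paper $\phi$ is continuous, so this holds. For the smooth part we take the affine function
\[
\tilde f_k(z)=\langle \Uk^T\nabla f(\xk),z\rangle ,
\]
which is continuously differentiable with $\nabla\tilde f_k\equiv \Uk^T\nabla f(\xk)$. The remaining choices are
\[
x=\Uk^T\xk,\qquad w=\Uk^T x_{k-N},\qquad \alpha=\ak,\qquad \beta=\bk,\qquad D=\Dk .
\]
By Step 2 of Algorithm \ref{alg:block-VMILA}, these satisfy $0<\ak\le\alpha_{\max}$ and $\frac1\mu I\preceq\Dk\preceq\mu I$.

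\textbf{Identification.} Substituting these choices into \eqref{eq:general_metric_function} gives
\[
\langle \Uk^T\nabla f(\xk)-\tfrac{\bk}{\ak}\Dk\Uk^T(\xk-x_{k-N}),\,y-\Uk^T\xk\rangle+\tfrac{1}{2\ak}\|y-\Uk^T\xk\|_{\Dk}^2+\phik(y)-\phik(\Uk^T\xk).
\]
This is exactly $h_k(y)$. For the prox point, \eqref{eq:general_exact_prox} yields
\[
\prox^{\Dk}_{\ak\phik}\bigl(\Uk^T\xk-\ak\Dk^{-1}\Uk^T\nabla f(\xk)+\bk\,\Uk^T(\xk-x_{k-N})\bigr),
\]
which is $\hyk$ from \eqref{eq:hyk}. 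The inexactness condition \eqref{eq:general_inexact_prox} then reads as \eqref{eq:tilde_yki}, so $\yk\approx_\tau\hyk$ in the sense of the definition.

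\textbf{Conclusion and caveats.} Lemma \ref{lem:general_inequalities} now yields \eqref{eq:ine_gene_1}--\eqref{eq:ine_gene_3} with $x=\Uk^T\xk$, $\hat y=\hyk$ and $\tilde y=\yk$. These are precisely \eqref{eq:ine_phila_1}--\eqref{eq:ine_phila_3}, with the last one using $\dk=\yk-\Uk^T\xk$.

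Two points need care. First, Lemma \ref{lem:general_inequalities} asks for $\beta>0$, while the algorithm allows $\bk=0$. In that case the inertial term vanishes and the same argument goes through: the proof of the cited lemma uses only strong convexity of $h$ with modulus $1/(\ak\mu)$, which does not depend on $\beta$. Alternatively, one can note that the bounds do not depend on $\beta$. Second, the properness and lower semicontinuity of $\phik$ must be checked, which follows as noted above. Beyond these, the step I expect to be the only real obstacle is the bookkeeping that matches $h_k$ with $h(\cdot;x,w)$ exactly. Everything else is a direct citation.
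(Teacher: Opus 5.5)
Your proposal is correct and follows the paper's proof. Both obtain the three bounds by instantiating Lemma~\ref{lem:general_inequalities} on $\R^{n_{i_k}}$ with $\phi_{i_k}^{x_k}$, $x=U_{i_k}^Tx_k$, $w=U_{i_k}^Tx_{k-N}$, $\alpha_k$, $\beta_k$, $D_k$; the only cosmetic difference is that the paper takes the smooth part to be the restriction $z\mapsto f(x_k+U_{i_k}(z-U_{i_k}^Tx_k))$ rather than your affine function, which is immaterial since only the gradient $U_{i_k}^T\nabla f(x_k)$ at the base point enters $h$. Your $\beta_k=0$ caveat, which the paper passes over, is settled most cleanly by applying the lemma with $w=x=U_{i_k}^Tx_k$ and any $\beta>0$, so that the inertial term vanishes and $h$ again coincides with $h_k$.
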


\begin{proof}
In Lemma \ref{lem:general_inequalities}, set $f(\textcolor{black}{\cdot}) = f(\xk+\Uk(\textcolor{black}{\cdot}-\textcolor{black}{\Uk^T}\xk))$, $\phi(\textcolor{black}{\cdot}) = \phi_{i_k}^{\xk}(\textcolor{black}{\cdot})$, $x = \Uk^T\xk$, $w=\Uk^Tx_{k-N}$, $\alpha=\alpha_k$, $\beta=\beta_k$ and $D=D_k$. By using these settings, we have $\hat{y}=\hyk$, $\tilde{y}=\yk$, with $\hyk,\yk$ defined in \eqref{eq:hyk}-\eqref{eq:tilde_yki} and $h(\cdot;x,w)=h_k(\cdot)$, being $h_k$ defined in \eqref{eq:hki}. Hence, the inequalities \eqref{eq:ine_phila_1}-\eqref{eq:ine_phila_2}-\eqref{eq:ine_phila_3} follow as mere applications of inequalities \eqref{eq:ine_gene_1}-\eqref{eq:ine_gene_2}-\eqref{eq:ine_gene_3} from Lemma \ref{lem:general_inequalities}.
\end{proof}

Next, we state the well-posedness of the line-search performed at {\sc Step 5} of Algorithm \ref{alg:block-VMILA}. To this aim, we require the function $f$ to have a Lipschitz continuous gradient. 
\begin{assumptions}\label{ass2}
$\nabla f$ is Lipschitz continuous with constant $L_f>0$, i.e., 
\begin{equation}\label{eq:Lipschitz}
\|\nabla f(x)-\nabla f(y)\|\leq L_f\|x-y\|, \quad \forall \ x,y\in \R^n. 
\end{equation}
\end{assumptions}

\begin{remark}
Assumption \ref{ass2} holds \textcolor{black}{for the GS PnP model \eqref{eq:PnP_framework} with the splitting \eqref{eq:PnP_splitting_2}, i.e., by setting }
$f=\textcolor{black}{\varrho} g_\sigma$, being $\textcolor{black}{\varrho}>0$ and $g_\sigma$ defined in \eqref{eq:g_sigma}
, provided that the neural network $N_\sigma$ in \eqref{eq:g_sigma} is given by the composition of continuously differentiable functions whose derivatives are bounded and Lipschitz \cite[Proposition 2]{hurault2022gradient}, as is the case of the convolutional neural network adopted in the numerical experiments of Section \ref{sec:numerical_experiments}. \textcolor{black}{Further, Assumption \ref{ass2} is satisfied for model \eqref{eq:PnP_framework} also with the splitting \eqref{eq:PnP_splitting_1}, i.e., setting $f(x)=\mathcal{D}(x)+\varrho g_\sigma(x)$, provided that the continuously differentiable data fidelity term $\mathcal{D}$ has a Lipschitz continuous gradient (see e.g. the data fidelities in Remark \ref{remark_assumption1}).}
\end{remark}

\begin{lemma}\label{lem:lambdamin} 
Suppose Assumptions \ref{ass1}-\ref{ass2} hold.
\begin{itemize}
    \item[(i)] For all $k\in\N$, the line-search defined at {\sc Step 5} of Algorithm \ref{alg:block-VMILA} terminates in a finite number of steps, i.e., $m_k<\infty$ for all $k\in\N$.
    \item[(ii)] There exists $\lambda_{\min}>0$ such that
\begin{equation*}
    \lambda_k=\delta^{m_k}\geq \lambda_{\min}, \quad \forall \ k\in\mathbb{N}.
\end{equation*} 
\end{itemize}
\end{lemma}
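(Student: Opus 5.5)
The plan is to show directly that the Armijo-like inequality \eqref{eq:armijo} holds for every trial value $\lambda=\delta^{m}$ below an explicit threshold $\bar\lambda>0$. This threshold will depend only on $L_f,\gamma,\mu,\alpha_{\min},\alpha_{\max},\beta_{\max}$ and not on $k$. Both items then follow at once: (i) holds because $\delta^m\to0$, and (ii) holds with $\lambda_{\min}=\min\{1,\delta\bar\lambda\}$, since the backtracking stops at the first $m$ with $\delta^m\le\bar\lambda$ or earlier. Throughout I abbreviate $e_k=\Uk^T(\xk-x_{k-N})$. I also note, by induction, that $\phi(\xk)<\infty$. Since $h_k(\yk)\le0$, this gives $\phik(\yk)<\infty$, and $F(\xkk)<\infty$ follows from the accepted inequality.

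Fix $k$ and $\lambda\in(0,1]$. For the smooth part I would use the descent lemma given by Assumption \ref{ass2}:
\[
f(\xk+\lambda\Uk\dk)\le f(\xk)+\lambda\langle \Uk^T\nabla f(\xk),\dk\rangle+\tfrac{L_f\lambda^2}{2}\|\dk\|^2 .
\]
For $\phi$ I would use convexity along the $\ik$-th block (Assumption \ref{ass1}(ii)). Since $\xk+\lambda\Uk\dk$ corresponds to the point $(1-\lambda)\Uk^T\xk+\lambda\yk$ for $\phik$, this gives
\[
\phi(\xk+\lambda\Uk\dk)\le\phi(\xk)+\lambda\bigl(\phik(\yk)-\phik(\Uk^T\xk)\bigr).
\]
Adding the two bounds and substituting the definition \eqref{eq:Deltak} of $h_k(\yk)$ yields
\[
F(\xk+\lambda\Uk\dk)\le F(\xk)+\lambda h_k(\yk)+\tfrac{\lambda\beta_k}{\alpha_k}\langle D_k e_k,\dk\rangle-\tfrac{\lambda}{2\alpha_k}\|\dk\|_{D_k}^2+\tfrac{L_f\lambda^2}{2}\|\dk\|^2 .
\]

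It then suffices to show that
\[
(1-\vartheta)\lambda h_k(\yk)+\tfrac{\lambda\beta_k}{\alpha_k}\langle D_ke_k,\dk\rangle+\tfrac{(L_f+\gamma)\lambda^2}{2}\|\dk\|^2\le \tfrac{\gamma}{2}\|e_k\|^2+\tfrac{\lambda}{2\alpha_k}\|\dk\|_{D_k}^2 .
\]
The first term is nonpositive, so I drop it. The mixed term is the main obstacle, because the inertial contribution must be absorbed by the extra $\frac{\gamma}{2}\|e_k\|^2$ that appears on the right-hand side of \eqref{eq:armijo}; this is exactly why that term was built into the line-search. I would handle it with Young's inequality:
\[
\tfrac{\lambda\beta_k}{\alpha_k}\langle D_ke_k,\dk\rangle\le\tfrac{\gamma}{2}\|e_k\|^2+\tfrac{\lambda^2\beta_{\max}^2\mu^2}{2\alpha_{\min}^2\gamma}\|\dk\|^2 .
\]
I would also use $\|\dk\|_{D_k}^2\ge\frac1\mu\|\dk\|^2$ and $\alpha_k\le\alpha_{\max}$. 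The sufficient condition then reduces to
\[
\lambda\Bigl(L_f+\gamma+\tfrac{\beta_{\max}^2\mu^2}{\alpha_{\min}^2\gamma}\Bigr)\le\tfrac{1}{\alpha_{\max}\mu},
\]
that is, $\lambda\le\bar\lambda$ with $\bar\lambda$ equal to the reciprocal of $\alpha_{\max}\mu\bigl(L_f+\gamma+\beta_{\max}^2\mu^2/(\alpha_{\min}^2\gamma)\bigr)$. This bound is uniform in $k$. The degenerate case $\dk=0$ is trivial, since then \eqref{eq:armijo} holds with $m_k=0$ because $h_k(\yk)\le 0$ and $\|e_k\|^2\ge0$. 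This concludes both (i) and (ii).
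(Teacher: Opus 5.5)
Your proof is correct, but it takes a different route from the paper. The paper carries out no estimate itself. It observes that Step 5 is literally the PHILA line-search of \cite{Bonettini-Prato-Rebegoldi-2024} applied to the restricted pair $f_0(\cdot)=f(x_k+U_{i_k}(\cdot-U_{i_k}^Tx_k))$, $f_1=\phi_{i_k}^{x_k}$, with current and previous iterates $U_{i_k}^Tx_k$ and $U_{i_k}^Tx_{k-N}$, and then invokes \cite[Lemma 12]{Bonettini-Prato-Rebegoldi-2024}.

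You instead carry out the underlying argument directly:
\begin{itemize}
\item the descent lemma for $f$ along $U_{i_k}d_k$;
\item convexity of $\phi_{i_k}^{x_k}$ on the segment $[U_{i_k}^Tx_k,\tilde y_k]$;
\item substitution of $h_k(\tilde y_k)$, which correctly leaves the favourable term $-\frac{\lambda}{2\alpha_k}\|d_k\|_{D_k}^2$;
\item Young's inequality, which absorbs the inertial cross term into $\frac{\gamma}{2}\|U_{i_k}^T(x_k-x_{k-N})\|^2$.
\end{itemize}

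The paper's reduction is shorter and reuses the single-block analysis. However, it leaves implicit why the resulting $\lambda_{\min}$ does not depend on $k$, even though $f_0$ and $f_1$ change at every iteration. For that one must note that $\nabla f_0$ is $L_f$-Lipschitz because $U_{i_k}$ has orthonormal columns, and that $\alpha_k,\beta_k,D_k$ range over the same bounded sets for all $k$.

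Your version makes this uniformity explicit. It gives the concrete value $\lambda_{\min}=\min\{1,\delta\bar\lambda\}$, which happens to be independent of $\vartheta$ and $\tau$. It also shows transparently why the term $\frac{\gamma}{2}\|U_{i_k}^T(x_k-x_{k-N})\|^2$ is built into the Armijo condition.

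The only tacit point is the base case of your finiteness induction, which requires $x_0\in\mathrm{dom}\,\phi$. This is equally implicit in the paper, since otherwise $h_0$ and the Armijo test are not meaningful.
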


\begin{proof}
The line-search at {\sc Step 5} of Algorithm \ref{alg:block-VMILA} is exactly the one described in \cite[Algorithm 1]{Bonettini-Prato-Rebegoldi-2024} when applied to the functions $f_0(\cdot)=f(\xk+\Uk(\cdot-\Uk^T\xk))$ and $f_1(\cdot)= \phi_{i_k}^{\xk}(\cdot)$, using the iterates $x^{(k)} = U_{i_k}^T\xk$ and $x^{(k-1)} = \Uk^Tx_{k-N}$, and the search direction $d^{(k)} = \yk - \Uk^T\xk$. Hence, the thesis follows by applying \cite[Lemma 12]{Bonettini-Prato-Rebegoldi-2024}. 
\end{proof}
 
We now show that the line-search at {\sc Step 5} enforces the sufficient decrease of a suitable merit function $\Psi$, which incorporates both the objective function $F$ and the information related to the inertial steps. Similar merit functions have previously appeared in the analysis of other block-coordinate proximal--gradient methods with inertial terms \cite{Ochs-2019,Pock-Sabach-2018}.

\begin{lemma}\label{lem:descent}
Suppose Assumptions \ref{ass1}-\ref{ass2} hold. Define the merit function \begin{equation}\label{eq:psi}
    \Psi(z_1,\ldots,z_{N+1})=F(z_1) + \frac{\gamma}{2}\sum_{i=1}^N\|z_i-z_{i+1}\|^2, \quad z_i\in\R^n, \ i=1,\ldots,\textcolor{black}{N}+1.
\end{equation}
Then, the following statements hold.
\begin{itemize}
    \item[(i)] For all $k\in\N$, we have 
    \begin{equation}\label{eq:suff_decr_phi}
    \Psi(\xkk,x_k,\ldots,x_{k-N+1})
    \leq \Psi(\xk,x_{k-1},\ldots,x_{k-N})+\textcolor{black}{\vartheta}\lambda_{\min} h_k(\yk).   
    \end{equation}

    \item[(ii)] We have
    \begin{equation}\label{eq:summable_h}
        \sum_{k=0}^{\infty}(-h_k(\yk))<\infty.
    \end{equation}
\end{itemize}    
\end{lemma}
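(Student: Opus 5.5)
The plan is to unpack the merit function $\Psi$ on consecutive tuples and then use the Armijo condition \eqref{eq:armijo} together with {\sc Step 6}. The key observation is that $\xkk-\xk = \Uk(\xkk^{(i_k)}-\xk^{(i_k)})$, since only block $i_k$ changes at iteration $k$. By \eqref{eq:update}, $\xkk-\xk$ equals either $\Uk\dk$ or $\lambda_k\Uk\dk$, so $\|\xkk-\xk\|=\|d_k\|$ or $\lambda_k\|d_k\|$ ($\Uk$ has orthonormal columns).

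\textbf{Step 1: the decrease achieved at iteration $k$.} By the choice in {\sc Step 6}, the accepted point always satisfies
\[
F(\xkk)+\tfrac{\gamma}{2}\|\xkk-\xk\|^2 \le F(\xk+\lambda_k\Uk\dk)+\tfrac{\gamma}{2}\lambda_k^2\|d_k\|^2 .
\]
In the first case this holds by the strict inequality; in the second it is an equality. Combining with \eqref{eq:armijo} and with $h_k(\yk)\le 0$, $\lambda_k\ge\lambda_{\min}$ (Lemma \ref{lem:lambdamin}) gives
\[
F(\xkk)+\tfrac{\gamma}{2}\|\xkk-\xk\|^2\le F(\xk)+\tfrac{\gamma}{2}\|\Uk^T(\xk-x_{k-N})\|^2+\vartheta\lambda_{\min}h_k(\yk).
\]

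\textbf{Step 2: identify the inertial term as a telescoping difference.} This is the main point to check. Among the $N$ steps $x_{k-N}\to x_{k-N+1}\to\dots\to\xk$, the blocks updated are cyclically $i_{k-N},\dots,i_{k-1}$. Since $i_{k-N}=i_k$, block $i_k$ changes only at step $k-N$, and the other steps leave it fixed. Hence
\[
\Uk^T(\xk-x_{k-N})=\Uk^T(x_{k-N+1}-x_{k-N}),
\]
and, because $x_{k-N+1}-x_{k-N}$ is supported on block $i_{k-N}=i_k$, we get $\|\Uk^T(\xk-x_{k-N})\|=\|x_{k-N+1}-x_{k-N}\|$. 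For $k<N$ we have $x_{k-N+1}-x_{k-N}=0$ whenever $k-N+1\le 0$. We also have $\xk-x_{k-N}=0$ by initialization, so the identity still holds (both sides vanish; for $k=N-1$ the step $x_0-x_{-1}=0$).

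\textbf{Step 3: recover $\Psi$.} Add $\frac{\gamma}{2}\sum_{j=k-N+1}^{k-1}\|x_{j+1}-x_j\|^2$ to both sides of the inequality in Step 1. The left side becomes $\Psi(\xkk,\xk,\dots,x_{k-N+1})$. The right side becomes $\Psi(\xk,\dots,x_{k-N})+\vartheta\lambda_{\min}h_k(\yk)$, which is \eqref{eq:suff_decr_phi}.

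\textbf{Part (ii).} Sum \eqref{eq:suff_decr_phi} for $k=0,\dots,K$ to obtain
\[
\vartheta\lambda_{\min}\sum_{k=0}^K(-h_k(\yk))\le \Psi(x_0,\dots,x_{-N})-\Psi(x_{K+1},\dots,x_{K-N+1}).
\]
Since $\Psi\ge F\ge F_{low}$ by Assumption \ref{ass1}(iii), the right side is at most $F(x_0)-F_{low}$. Letting $K\to\infty$ and using $-h_k(\yk)\ge0$ yields \eqref{eq:summable_h}.
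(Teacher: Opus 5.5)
Your proof is correct and is essentially the paper's argument: the block identity for the inertial term, the Armijo condition combined with {\sc Step 6}, adding the remaining $N-1$ squared increments to recover $\Psi$, and then telescoping. The only difference is that you apply the {\sc Step 6} comparison before passing to $\Psi$, whereas the paper does it after. There is one minor slip in your treatment of $k<N$. For $0<k<N$ it is not true that $x_k-x_{k-N}=x_k-x_0$ vanishes; what does hold is $U_{i_k}^T(x_k-x_0)=0$, because block $i_k=k+1$ is untouched in iterations $0,\ldots,k-1$, and that is all the identity needs.
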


\begin{proof}
(i) We start by observing that
\begin{equation}\label{eq:inertial_technical}
\Uk^T(\xk-x_{k-N}) 
=\Uk^T(x_{k-N+1}-x_{k-N}),
\end{equation}
as the $i_k-$th block of coordinates \textcolor{black}{is not updated during any of} the previous $N-1$ iterations $k-1,k-2,\ldots,k-N+1$ preceeding the $k-$th iteration. Furthermore $\|\Uk^T(x_{k-N+1}-x_{k-N})\| = \|x_{k-N+1}-x_{k-N}\|$ as the $i_k-$th block of coordinates is the only one to be updated at iteration $k-N$. Then, the Armijo-like condition \eqref{eq:armijo} in {\sc Step 5} of Algorithm \ref{alg:block-VMILA} rewrites as
\begin{equation*}
F(\xk + \lambda_k \Uk \dk)+\frac{\gamma}{2}\lambda_k^2\|d_k\|^2 \leq F(\xk)+\frac{\gamma}{2}\|x_{k-N+1}-x_{k-N}\|^2+\textcolor{black}{\vartheta} \lambda_kh_k(\yk).    
\end{equation*}
By summing the quantity $\frac{\gamma}{2}\sum_{i=1}^{N-1}\|x_{k-i+1}-x_{k-i}\|^2$ to both sides of the previous inequality, we obtain 
\begin{equation*}
F(\xk + \lambda_k \Uk \dk)+\frac{\gamma}{2}\lambda_k^2\|d_k\|^2+\frac{\gamma}{2}\sum_{i=1}^{N-1}\|x_{k-i+1}-x_{k-i}\|^2 \leq F(\xk)+\frac{\gamma}{2}\sum_{i=1}^{N}\|x_{k-i+1}-x_{k-i}\|^2+\textcolor{black}{\vartheta} \lambda_kh_k(\yk).   
\end{equation*}
By recalling the definition of the merit function \eqref{eq:psi}, the Armijo-like condition is easily rewritten as
\begin{equation}\label{eq:almost_suff_decr_phi}
\Psi(\xk + \lambda_k \Uk \dk,x_k,x_{k-1},\ldots,x_{k-N+1})
\leq \Psi(\xk,x_{k-1},x_{k-2}\ldots,x_{k-N})+\textcolor{black}{\vartheta} \lambda_k h_k(\yk).    
\end{equation}
Furthermore, by following a similar reasoning as the one just conducted for the Armijo-like condition, one can rewrite {\sc Step 6} in terms of the function $\Psi$ as
\begin{equation}\label{eq:Step6_rewritten}
\xkk = \xk + \begin{cases}
\Uk \dk, \quad &\text{if }\Psi(\xk+ \Uk\dk,\xk,\ldots,x_{k-N+1}) < \Psi(\xk+ \lambda_k\Uk\dk,\xk,\ldots,x_{k-N+1})\\
\lambda_k\Uk \dk, \quad &\text{otherwise}.
\end{cases}
\end{equation}
Hence, in both cases, the following property holds
\begin{equation}\label{eq:step6_property}
\Psi(x_{k+1},x_k,x_{k-1},\ldots,x_{k-N+1})\leq \Psi(\xk + \lambda_k \Uk \dk,x_k,x_{k-1},\ldots,x_{k-N+1}).
\end{equation}
Combining \eqref{eq:step6_property} and \eqref{eq:almost_suff_decr_phi} leads to the inequality
\begin{equation}\label{eq:almost_done}
\Psi(\xkk,x_k,\ldots,x_{k-N+1})
\leq \Psi(\xk,x_{k-1},\ldots,x_{k-N})+\textcolor{black}{\vartheta} \lambda_k h_k(\yk).    
\end{equation}
By recalling that $h_k(\yk)\leq 0$ and applying Lemma \ref{lem:lambdamin}(ii) to \eqref{eq:almost_done}, we obtain the thesis of item (i).

(ii) By summing inequality \eqref{eq:suff_decr_phi} for $k=0,\ldots,K$, and noting the telescopic sum at the right-hand side, we get the following
\begin{align}
    \textcolor{black}{\vartheta}\lambda_{\min}\sum_{k=0}^{K}(-h_k(\yk))&\leq \sum_{k=0}^{K}(\Psi(x_k,x_{k-1},\ldots,x_{k-N})-\Psi(x_{k+1},\xk,\ldots,x_{k-N+1}))\nonumber\\
    &=\Psi(x_0,x_{-1},\ldots,x_{-N})-\Psi(x_{K+1},x_{K},\ldots,x_{K-N+1})\nonumber\\
    &\leq \Psi(x_0,x_{-1},\ldots,x_{-N})-F_{low},\label{eq:hk_almost_summable}
\end{align}
where the last inequality follows from Assumption \ref{ass1}(iii) and the definition of the merit function $\Psi$ in \eqref{eq:psi}. The thesis follows by taking the limit for $K\rightarrow \infty$ on inequality \eqref{eq:hk_almost_summable} and noting that its right-hand side is independent of $K$.
\end{proof}

To proceed further with the analysis, it is crucial to assume that also $\phi$ is continuously differentiable with locally Lipschitz continuous gradient. Such a differentiability assumption could be neglected if $\phi$ was a separable sum with respect to the blocks, i.e., in the case where $\phi(x) = \sum_{i=1}^N\phi_i(U_i^Tx)$. Assuming that $\phi$ is a separable sum is quite common in the literature of block-coordinate proximal--gradient methods \cite{Bolte-etal-2014,Bonettini-Prato-Rebegoldi-2018,Chouzenoux-Pesquet-Repetti-2016,Frankel-Garrigos-Peypouquet-2015,Gur-Sabach-shtern-2023,Ochs-2019}; however, we prefer to assume the differentiability of $\phi$ and drop the separability assumption, as the PnP framework based on model \eqref{eq:PnP_framework} does not allow neither $f$ nor $\phi$ to be separable.

\begin{assumptions}\label{ass3}
$\phi$ \textcolor{black}{is finite and} continuously differentiable \textcolor{black}{on $\R^n$} and $\nabla \phi$ is locally Lipschitz continuous \textcolor{black}{on $\R^n$}, i.e., for all compact subsets $K\subseteq \R^n$ there exists $L_{\phi,K}>0$ such that  
\begin{equation}\label{eq:phi_Lipschitz}
\|\nabla \phi(x)-\nabla \phi(y)\|\leq L_{\phi,K}\|x-y\|, \quad \forall \ x,y\in K. 
\end{equation}
\end{assumptions}

\begin{remark}\label{remark_assumption3}
\textcolor{black}{Assumption \ref{ass3} is trivially satisfied for the GS PnP model \eqref{eq:PnP_framework} with the splitting \eqref{eq:PnP_splitting_1}, namely setting $\phi(x)=0$. Otherwise, if we employ splitting \eqref{eq:PnP_splitting_2} and set $\phi(x) = \mathcal{D}(x)$, the data fidelity term needs to have a locally Lipschitz continuous gradient, which is the case for the examples reported in Remark \ref{remark_assumption1}.\\
One model not complying with our assumptions is the GS PnP framework \eqref{eq:PnP_framework} with the generalized Kullback-Leibler divergence as data fidelity, which covers the case of linear imaging inverse problems under Poisson noise according to the Maximum A Posteriori approach \cite{Bertero2018}. Although this function is convex along the coordinates, it can not be included in our analysis because it is not continuously differentiable on the entire space $\R^n$, thus contrasting the differentiability requirements made in Assumptions \ref{ass1} and \ref{ass3}.}
\end{remark}

\textcolor{black}{Another crucial requirement is the boundedness of the Block-PHILA iterates.}

\textcolor{black}{
\begin{assumptions}\label{ass4}
The sequence $\{x_k\}_{k\in\N}$ generated by Algorithm \ref{alg:block-VMILA} is bounded.    
\end{assumptions}
}

\textcolor{black}{
\begin{remark}
Assumption \ref{ass4} holds whenever the objective function $F$ in the PnP model \eqref{eq:PnP_framework} is coercive. Indeed, by combining the definition of the merit function $\Psi$ in \eqref{eq:psi} and item (i) of Lemma \ref{lem:descent}, it follows that
\begin{equation*}
F(x_k)\leq \Psi(x_k,x_{k-1},\ldots,x_{k-N})\leq \Psi(x_0,x_{-1},\ldots,x_{-N})=\Psi(x_0,\ldots,x_0)=F(x_0),    
\end{equation*}
therefore $\{x_k\}_{k\in\N}$ is contained in the sublevel set $\{x\in\R^n: \ F(x)\leq F(x_0)\}$, which is bounded provided that $F$ is coercive. As noted in \cite[Appendix D]{hurault2022gradient}, we can always make the function $F$ in \eqref{eq:PnP_framework} coercive by choosing a convex, compact subset $K\subseteq \R^n$ and replacing the regularization term $g_\sigma(x)$ with $\tilde{g}_\sigma(x) = g_\sigma(x) + \frac{1}{2}\|x-P_{K}(x)\|^2$, where $P_K$ is the Euclidean projection onto $K$.   
\end{remark}
}

The following Lemma shows an upper bound on the norm of the gradient of the objective function along the iterates $\{\hat{u}_k\}_{k\in\N}$, which are defined so that $\Uk^T\hat{u}_k = \hyk$ and $U_j^T\hat{u}_k=U_j^T\xk$ for any $j\neq \ik$. The result is inspired by \cite[Lemma 3.2]{chorobura2023random}, although unlike in \cite{chorobura2023random} we need to take into account the inexactness of the proximal--gradient point, as well as the presence of inertial terms, variable metrics and a linesearch procedure. As in \cite{chorobura2023random}, we require the sequence $\{\xk\}_{k\in\N}$ to be bounded, which is used in combination with the local Lipschitz continuity of $\nabla \phi$ to ensure the existence of the upper bound.

\begin{lemma}\label{lem:gradient_bound}
    Suppose Assumptions \ref{ass1}-\ref{ass2}-\ref{ass3}-\ref{ass4} hold. Set
    \begin{equation}\label{eq:huk}
    \hat{u}_k = x_k + U_{i_k}(\hyk-U_{i_k}^Tx_k), \quad \forall \ k\in \N.
    \end{equation}
    Then, for all $k\geq N$, there exists $b>0$ such that
\begin{equation}\label{eq:gradient_norm_bound}
        \|\nabla F(\hat{u}_k)\|^2\leq b^2 \sum_{j=-(N-1)}^N(-h_{k+j-1}(\tilde{y}_{k+j-1})). 
    \end{equation}
\end{lemma}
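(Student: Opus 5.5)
We bound $\|\nabla F(\hat u_k)\|^2=\sum_{i=1}^N\|U_i^T\nabla F(\hat u_k)\|^2$ block by block. For each block $i$ we use the most recent or next iteration at which block $i$ is selected, call it $k'$. We then relate $U_i^T\nabla F(\hat u_k)$ to the optimality condition of the block subproblem solved at $k'$. Since the blocks are selected cyclically, every block $i$ is selected at exactly one iteration $k'=k+j-1$ with $j\in\{1,\dots,N\}$, i.e. $k'\in\{k,\dots,k+N-1\}$. This explains the window of indices in \eqref{eq:gradient_norm_bound}. The past indices $j\le 0$ appear because the inertial term $\Uk^T(x_{k'}-x_{k'-N})$ reaches back $N$ iterations.

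\emph{Step 1: block optimality.} By Assumption \ref{ass3}, $\phi_{i_{k'}}^{x_{k'}}$ is differentiable, with gradient $U_{i_{k'}}^T\nabla\phi(\hat u_{k'})$ at $\hat y_{k'}$. Hence the first-order condition for $\hat y_{k'}=\arg\min h_{k'}$ reads
$$U_{i_{k'}}^T\nabla f(x_{k'})-\tfrac{\beta_{k'}}{\alpha_{k'}}D_{k'}U_{i_{k'}}^T(x_{k'}-x_{k'-N})+\tfrac1{\alpha_{k'}}D_{k'}(\hat y_{k'}-U_{i_{k'}}^Tx_{k'})+U_{i_{k'}}^T\nabla\phi(\hat u_{k'})=0.$$
Subtracting this from $U_i^T\nabla F(\hat u_k)$ with $i=i_{k'}$ gives
$$\|U_i^T\nabla F(\hat u_k)\|\le L_f\|\hat u_k-x_{k'}\|+L_{\phi,K}\|\hat u_k-\hat u_{k'}\|+\tfrac{\beta_{\max}\mu}{\alpha_{\min}}\|x_{k'}-x_{k'-N}\|+\tfrac{\mu}{\alpha_{\min}}\|\hat y_{k'}-U_{i_{k'}}^Tx_{k'}\|.$$
Here $K$ is a compact set containing all $x_k$ and all $\hat u_k$. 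It exists by Assumption \ref{ass4}, because $\|\hat u_k-x_k\|$ is bounded by \eqref{eq:ine_phila_1} and by the summability \eqref{eq:summable_h}, which makes $-h_k(\yk)$ bounded.

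\emph{Step 2: express the distances through consecutive differences.} Write $\hat u_k-x_{k'}=(\hat u_k-x_k)+(x_k-x_{k'})$ and similarly $\hat u_{k'}=x_{k'}+U_{i_{k'}}(\hat y_{k'}-U_{i_{k'}}^Tx_{k'})$. Every term is then a sum of:
\begin{itemize}
\item proximal displacements $\|\hat y_l-U_{i_l}^Tx_l\|$ with $l\in\{k,\dots,k+N-1\}$, bounded by \eqref{eq:ine_phila_1};
\item consecutive differences $\|x_{l+1}-x_l\|=\lambda_l\|d_l\|$ or $\|d_l\|$, both bounded by $\|d_l\|$ and hence by \eqref{eq:ine_phila_3}, with $l$ ranging over $k-N,\dots,k+N-1$.
\end{itemize}
The inertial difference $x_{k'}-x_{k'-N}$ is handled via \eqref{eq:inertial_technical}: it equals $x_{k'-N+1}-x_{k'-N}=$ (step at $k'-N$) $=\lambda_{k'-N}d_{k'-N}$ or $d_{k'-N}$. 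Its index $k'-N$ lies in $\{k-N,\dots,k-1\}$, which corresponds to $j\in\{-(N-1),\dots,0\}$. For this index to be a genuine iteration we need $k\ge N$. Before that, the initialization $x_{-i}=x_0$ gives zero, which is consistent with the restriction $k\ge N$ in the statement.

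\emph{Step 3: assemble.} Square each block bound and use $(\sum_{m=1}^M a_m)^2\le M\sum_{m=1}^M a_m^2$. Each squared displacement is bounded by a constant times some $-h_l(\tilde y_l)$ with $l=k+j-1$, $j\in\{-(N-1),\dots,N\}$. Summing over the $N$ blocks gives \eqref{eq:gradient_norm_bound}, with $b^2$ depending on $N,L_f,L_{\phi,K},\mu,\alpha_{\min},\alpha_{\max},\beta_{\max},\tau$.

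The main obstacle is the index bookkeeping. For each block we must verify that every iterate difference involved falls within the window $k-N,\dots,k+N-1$, and that the identity \eqref{eq:inertial_technical} correctly converts the inertial term. A secondary subtlety is that the locally Lipschitz constant of $\nabla\phi$ has to be applied on a compact set containing the $\hat u_k$. This requires bounding $\hat u_k-x_k$ uniformly, which I would do via \eqref{eq:ine_phila_1} together with the bound $-h_k(\yk)\le(\Psi(x_0,\dots,x_0)-F_{low})/(\vartheta\lambda_{\min})$ that comes out of the proof of Lemma \ref{lem:descent}.
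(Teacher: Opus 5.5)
Your proposal is correct and follows essentially the paper's proof. Both arguments decompose $\|\nabla F(\hat u_k)\|^2$ block-wise over the cyclic window $k,\dots,k+N-1$, use the optimality condition of $\hat y_{k+j-1}$, apply the Lipschitz bounds, telescope into consecutive steps, handle the inertial term via \eqref{eq:inertial_technical}, and finish with Lemma \ref{lem:technical_1}; your boundedness argument for $\{\hat u_k\}$ via \eqref{eq:ine_phila_1} and \eqref{eq:summable_h} is actually cleaner than the paper's appeal to continuity of the proximal map.

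One small slip: \eqref{eq:inertial_technical} holds only for the block $U_{i_{k'}}^T(x_{k'}-x_{k'-N})$, not for the full vector $x_{k'}-x_{k'-N}$. Either keep that projection in Step 1, or telescope the full difference over the indices $k'-N,\dots,k'-1$, which still lie in the window.
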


\begin{proof}
We start by noting that the sequence $\{\hat{u}_k\}_{k\in\N}$ is bounded, due to \textcolor{black}{Assumption \ref{ass4}, \textsc{Step 2} of Algorithm \ref{alg:block-VMILA}} and the continuity of the proximal operator with respect to its parameters. Then, there exists a compact $K\subseteq \R^n$ that contains both $\{x_k\}_{k\in\N}$ and $\{\hat{u}_k\}_{k\in\N}$ and a corresponding Lipschitz constant $L_{\phi,K}$ satisfying Assumption \ref{ass3}. As also Assumption \ref{ass2} holds, we can conclude that the following local, Lipschitz-block conditions are valid for all $i=1,\dots,N$:
\begin{align}
\|U_i^T(\nabla f(x)-\nabla f(y))\|\leq L_f\|x-y\|, \quad \|U_i^T(\nabla \phi(x)-\nabla \phi(y))\|\leq L_{\phi,K}\|x-y\|, \quad \forall \ x,y\in K.\label{eq:Lipschitz-like}
\end{align}

Furthermore, we recall that the proximal--gradient point $\hyk$ is the unique minimum point of the function $h_k$ defined in \eqref{eq:hki}, i.e., $\hyk=\operatorname{argmin}_{y\in\R^{n_{i_k}}}h_k(y)$, and the corresponding optimality condition can be rewritten as follows
    \begin{align}\label{eq:optimality}
        \nabla h_k(\hyk) = 0 \quad &\Leftrightarrow \quad  U_{i_k}^T\nabla f(x_k)-\frac{\bk}{\ak}\Dk \Uk^T(\xk-x_{k-N})+\frac{1}{\alpha_k}D_k(\hyk-U_{i_k}^Tx_k)+\nabla \phi_{i_k}^{x_k}(\hyk)=0\nonumber\\
        &\Leftrightarrow \quad U_{i_k}^T\nabla \phi(x_k+U_{i_k}(\hyk-U_{i_k}^Tx_k))\nonumber\\
        &\phantom{\Leftrightarrow \quad}=-U_{i_k}^T\nabla f(x_k)+\frac{\bk}{\ak}\Dk \Uk^T(\xk-x_{k-N})-\frac{1}{\alpha_k}D_k(\hyk-U_{i_k}^Tx_k)\nonumber\\
        &\Leftrightarrow \quad U_{i_k}^T\nabla \phi(\hat{u}_k)=-U_{i_k}^T\nabla f(x_k)+\frac{\bk}{\ak}\Dk \Uk^T(\xk-x_{k-N})-\frac{1}{\alpha_k}D_k(\hyk-U_{i_k}^Tx_k),
    \end{align}
    where the second equivalence follows from the definition of $\phi_{i_k}^{x_k}$ in \eqref{eq:phik} and the chain rule holding for differentiable functions, whereas the third is obtained by definition of the point $\hat{u}_k$ in \eqref{eq:huk}.
    
    Based on \eqref{eq:optimality}, the quantity $\|\nabla F(\hat{u}_k)\|^2$ can be rewritten as follows:
    \begin{align*}
        &\|\nabla F(\hat{u}_k)\|^2=\sum_{j=1}^{N}\|U_{i_{k+j-1}}^T\nabla F(\hat{u}_k)\|^2=\sum_{j=1}^{N}\|U_{i_{k+j-1}}^T(\nabla \ff(\hat{u}_k)+\nabla \fg(\hat{u}_k))\|^2\\
        &=\sum_{j=1}^{N}\|U_{i_{k+j-1}}^T(\nabla \ff(\hat{u}_k)-\nabla f(x_{k+j-1})+\nabla \fg(\hat{u}_k)-\nabla \phi(\hat{u}_{k+j-1}))\\
        &-\frac{1}{\alpha_{k+j-1}}D_{k+j-1}(\hat{y}_{k+j-1}-U_{i_{k+j-1}}^Tx_{k+j-1})+\frac{\beta_{k+j-1}}{\alpha_{k+j-1}}D_{k+j-1}U_{i_{k+j-1}}^T(x_{k+j-1}-x_{k+j-1-N})\|^2,
\end{align*}
where the third equality is due to the optimality condition \eqref{eq:optimality} with $k$ replaced by $k+j-1$. Applying the relation $\|a+b\|^2\leq 2\|a\|^2+2\|b\|^2$ \textcolor{black}{two} times and {\sc Step 2} of Algorithm \ref{alg:block-VMILA} yields
\begin{align*}        
 &\|\nabla F(\hat{u}_k)\|^2\leq 4 \sum_{j=1}^{N}\left(\|U_{i_{k+j-1}}^T(\nabla \ff(\hat{u}_k)-\nabla f(x_{k+j-1}))\|^2+\frac{1}{\alpha_{k+j-1}}\|D_{k+j-1}(\hat{y}_{k+j-1}-U_{i_{k+j-1}}^Tx_{k+j-1})\|^2\right)\\
&+4\sum_{j=1}^{N}\left(\|U_{i_{k+j-1}}^T(\nabla \phi(\hat{u}_k)-\nabla \phi(\hat{u}_{k+j-1}))\|^2+\frac{\beta_{k+j-1}}{\alpha_{k+j-1}}\|D_{k+j-1}U_{i_{k+j-1}}^T(x_{k+j-1}-x_{k+j-1-N})\|^2\right)\\ 
&\leq 4 \sum_{j=1}^{N}\left(\|U_{i_{k+j-1}}^T(\nabla \ff(\hat{u}_k)-\nabla f(x_{k+j-1}))\|^2+\frac{\textcolor{black}{\mu^2}}{\alpha_{\min}}\|\hat{y}_{k+j-1}-U_{i_{k+j-1}}^Tx_{k+j-1}\|^2\right)\\
&+4 \sum_{j=1}^{N}\left(\|U_{i_{k+j-1}}^T(\nabla \phi(\hat{u}_k)-\nabla \phi(\hat{u}_{k+j-1}))\|^2+\frac{\beta_{\max}\textcolor{black}{\mu^2}}{\alpha_{\min}}\|U_{i_{k+j-1}}^T(x_{k+j-1}-x_{k+j-1-N})\|^2\right).
\end{align*}
By applying the inequalities in \eqref{eq:Lipschitz-like}, we get
\begin{align}
\|\nabla F(\hat{u}_k)\|^2&\leq 4L_f^2\sum_{j=1}^{N}\|\hat{u}_k-x_{k+j-1}\|^2+\frac{4\textcolor{black}{\mu^2}}{\alpha_{\min}}\sum_{j=1}^N\|\hat{y}_{k+j-1}-U_{i_{k+j-1}}^Tx_{k+j-1}\|^2\nonumber\\
&+4L_{\phi,K}^2\sum_{j=1}^{N}\|\hat{u}_k-\hat{u}_{k+j-1}\|^2+\frac{4\beta_{\max}\textcolor{black}{\mu^2}}{\alpha_{\min}}\sum_{j=1}^N\|U_{i_{k+j-1}}^T(x_{k+j-1}-x_{k+j-1-N})\|^2\nonumber\\
&= 4L_f^2\sum_{j=1}^{N}\|\hat{u}_k-x_k+\sum_{t=1}^{j-1}(x_{k+t-1}-x_{k+t})\|^2+\frac{4\textcolor{black}{\mu^2}}{\alpha_{\min}}\sum_{j=1}^N\|\hat{y}_{k+j-1}-U_{i_{k+j-1}}^Tx_{k+j-1}\|^2\nonumber\\
&+4L_{\phi,K}^2\sum_{j=1}^{N}\|\hat{u}_k-x_k+\sum_{t=1}^{j-1}(x_{k+t-1}-x_{k+t})+x_{k+j-1}-\hat{u}_{k+j-1}\|^2\nonumber\\
&+\frac{4\beta_{\max}\textcolor{black}{\mu^2}}{\alpha_{\min}}\sum_{j=1}^N\|U_{i_{k+j-1}}^T(x_{k+j-1}-x_{k+j-1-N})\|^2.\nonumber
\end{align}
Repeatedly applying Jensen's inequality to the above inequality yields
\begin{align}
\|\nabla F(\hat{u}_k)\|^2&\leq 4L_f^2\sum_{j=1}^{N}\left(2\|\hat{u}_k-x_k\|^2+2(j-1)\sum_{t=1}^{j-1}\|x_{k+t-1}-x_{k+t}\|^2\right)\nonumber\\
&+4L_{\phi,K}^2\sum_{j=1}^{N}\left(2\|\hat{u}_k-x_k\|^2+4\|x_{k+j-1}-\hat{u}_{k+j-1}\|^2+4(j-1)\sum_{t=1}^{j-1}\|x_{k+t-1}-x_{k+t}\|^2\right)\nonumber\\
&+\frac{4\textcolor{black}{\mu^2}}{\alpha_{\min}}\sum_{j=1}^N\|\hat{y}_{k+j-1}-U_{i_{k+j-1}}^Tx_{k+j-1}\|^2+\frac{4\beta_{\max}\textcolor{black}{\mu^2}}{\alpha_{\min}}\sum_{j=1}^N\|U_{i_{k+j-1}}^T(x_{k+j-1}-x_{k+j-1-N})\|^2.\label{eq:so_technical}
\end{align}

Next, we upper-bound some of the squared norms appearing in \eqref{eq:so_technical}. We note that
\begin{align}
    \|\hat{u}_k-x_k\|&=\|U_{i_k}(\hyk-\Uk^T\xk)\|=\|\hyk-\Uk^T\xk\|\label{eq:norm1}\\
    \|\hat{u}_{k+j-1}-x_{k+j-1}\|&=\|\hat{y}_{k+j-1}-U_{i_{k+j-1}}^Tx_{k+j-1}\|.\label{eq:norm1_bis}
\end{align}
Furthermore, by {\sc Step 6} of Algorithm \ref{alg:block-VMILA}, it is either $x_{k+t}-x_{k+t-1}=\lambda_{k+t-1} U_{i_{k+t-1}} d_{k+t-1}$ or $x_{k+t}-x_{k+t-1}=U_{i_{k+t-1}} d_{k+t-1}$, so that in both cases
\begin{equation}\label{eq:norm2}
 \|x_{k+t}-x_{k+t-1}\|\leq \|U_{i_{k+t-1}}d_{k+t-1}\|=\|d_{k+t-1}\|.   
\end{equation}
In addition, following a similar reasoning as in \eqref{eq:inertial_technical}, we can show that
\begin{align}
\|U_{i_{k+j-1}}^T(x_{k+j-1}-x_{k+j-1-N})\|&=\|U_{i_{k+j-1}}^T(x_{k+j-N}-x_{k+j-1-N})\|\nonumber\\
&=\|x_{k+j-N}-x_{k+j-N-1}\|\leq \|d_{k+j-N-1}\|.\label{eq:norm3}
\end{align}
Plugging \eqref{eq:norm1}-\eqref{eq:norm1_bis}-\eqref{eq:norm2}-\eqref{eq:norm3} into \eqref{eq:so_technical} yields
\begin{align*}
&\|\nabla F(\hat{u}_k)\|^2\leq 4L_f^2\sum_{j=1}^{N}\left(2\|\hyk-\Uk^T\xk\|^2+2(N-1)\sum_{t=1}^{N}\|d_{k+t-1}\|^2\right)\\
&+4L_{\phi,K}^2\sum_{j=1}^{N}\left(2\|\hyk-\Uk^T\xk\|^2+4\|\hat{y}_{k+j-1}-U_{i_{k+j-1}}^Tx_{k+j-1}\|^2+4(N-1)\sum_{t=1}^{N}\|d_{k+t-1}\|^2\right)\\
&+\frac{4\textcolor{black}{\mu^2}}{\alpha_{\min}}\sum_{j=1}^N\|\hat{y}_{k+j-1}-U_{i_{k+j-1}}^Tx_{k+j-1}\|^2+\frac{4\beta_{\max}\textcolor{black}{\mu^2}}{\alpha_{\min}}\sum_{j=1}^N\|d_{k+j-N-1}\|^2\\
&=8N(N-1)(L_f^2+2L_{\phi,K}^2)\sum_{t=1}^N\|d_{k+t-1}\|^2+4\left(\frac{\textcolor{black}{\mu^2}}{\alpha_{\min}}+4L_{\phi,K}^2\right)\sum_{j=1}^N\|\hat{y}_{k+j-1}-U_{i_{k+j-1}}^Tx_{k+j-1}\|^2\\
&+8N(L_f^2+L_{\phi,K}^2)\|\hyk-\Uk^T\xk\|^2+\frac{4\beta_{\max}\textcolor{black}{\mu^2}}{\alpha_{\min}}\sum_{j=1}^N\|d_{k+j-N-1}\|^2.
\end{align*}
Now, we can apply Lemma \ref{lem:technical_1} to all the squared norms in the previous inequality, thus concluding there exist $b_1>0$ and $b_2>0$ such that
\begin{equation*}
    \|\nabla F(\hat{u}_k)\|^2 \leq b_1^2 \sum_{j=1}^N(-h_{k+j-1}(\tilde{y}_{k+j-1})) + b_2^2\sum_{j=1}^N(-h_{k+j-N-1}(\tilde{y}_{k+j-N-1})).
\end{equation*}
Finally, the thesis follows by setting $b=\sqrt{\max\{b_1^2,b_2^2\}}$.
\end{proof}

Thanks to the previous Lemma, it is possible to prove the first mild convergence result for Algorithm \ref{alg:block-VMILA}, as detailed in the following Theorem.

\begin{theorem}
Suppose Assumptions \ref{ass1}-\ref{ass2}-\ref{ass3}-\ref{ass4} hold. The following statements hold true.
\begin{itemize}
\item[(i)] We have $\sum_{k=0}^{\infty}\|\nabla F(x_k)\|^2<\infty$, thus each limit point of $\{x_k\}_{k\in\N}$ is stationary for $F$.
\item[(ii)] There exists $C>0$ such that
\begin{equation*}
\min_{0\leq i\leq \mathcal{K}}\|\nabla F(x_i)\|^2\leq \frac{C}{\mathcal{K}+1}.
\end{equation*}
\end{itemize}
\end{theorem}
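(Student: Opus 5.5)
The approach is to transfer the bound of Lemma \ref{lem:gradient_bound}, which holds at the auxiliary points $\hat{u}_k$, to the iterates $x_k$ themselves. Then summability, and hence the rate, will follow from Lemma \ref{lem:descent}(ii).

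First, on the compact set $K$ containing $\{x_k\}$ and $\{\hat{u}_k\}$ (already fixed in the proof of Lemma \ref{lem:gradient_bound}), $\nabla F=\nabla f+\nabla\phi$ is Lipschitz with constant $L_f+L_{\phi,K}$ by Assumptions \ref{ass2}-\ref{ass3}. Hence
\begin{equation*}
\|\nabla F(x_k)\|^2\leq 2\|\nabla F(\hat{u}_k)\|^2+2(L_f+L_{\phi,K})^2\|\hat{u}_k-x_k\|^2 .
\end{equation*}
By \eqref{eq:norm1} and \eqref{eq:ine_phila_1}, the last term is bounded by a constant times $-h_k(\yk)$. Combining this with \eqref{eq:gradient_norm_bound}, for every $k\geq N$ we get
\begin{equation*}
\|\nabla F(x_k)\|^2\leq c\sum_{j=-N}^{N}(-h_{k+j-1}(\tilde{y}_{k+j-1}))
\end{equation*}
for some constant $c>0$. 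Recall that each $-h_\ell(\tilde y_\ell)\geq 0$.

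For (i), I sum this inequality over $k\geq N$. Each index $\ell$ appears in at most $2N+1$ windows, so
\begin{equation*}
\sum_{k\geq N}\|\nabla F(x_k)\|^2\leq c(2N+1)\sum_{\ell\geq 0}(-h_\ell(\tilde y_\ell)),
\end{equation*}
which is finite by \eqref{eq:summable_h}. The finitely many terms with $k<N$ are finite anyway. Summability gives $\nabla F(x_k)\to 0$. If $x_{k_j}\to x^*$, continuity of $\nabla F$ (from Assumptions \ref{ass1}(i) and \ref{ass3}) gives $\nabla F(x^*)=0$.

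For (ii), I make the summation quantitative with the telescoping bound \eqref{eq:hk_almost_summable}:
\begin{equation*}
\sum_{\ell=0}^{\infty}(-h_\ell(\tilde y_\ell))\leq \frac{\Psi(x_0,\ldots,x_0)-F_{low}}{\vartheta\lambda_{\min}}=\frac{F(x_0)-F_{low}}{\vartheta\lambda_{\min}}.
\end{equation*}
Hence $\sum_{i=0}^{\mathcal{K}}\|\nabla F(x_i)\|^2\leq C$ uniformly in $\mathcal{K}$, with
\begin{equation*}
C=\sum_{i=0}^{N-1}\|\nabla F(x_i)\|^2+c(2N+1)\frac{F(x_0)-F_{low}}{\vartheta\lambda_{\min}}.
\end{equation*}
The first sum is finite; alternatively, each $\|\nabla F(x_i)\|$ with $i<N$ can be bounded by $\sup_K\|\nabla F\|$. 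Since the minimum of $\mathcal{K}+1$ nonnegative numbers is at most their average, $(\mathcal{K}+1)\min_{0\leq i\leq\mathcal{K}}\|\nabla F(x_i)\|^2\leq C$, which is the claimed rate.

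The main obstacle is the index bookkeeping. Lemma \ref{lem:gradient_bound} involves indices $k+j-1$ with $j$ from $-(N-1)$ to $N$, so for small $k$ some indices could be negative. The lemma is stated only for $k\geq N$, which avoids undefined $h$ terms, and this is why the early iterates must be absorbed separately into $C$. I also need the window-overlap count, at most $2N+1$ and in any case a fixed constant, to stay uniform in $k$.
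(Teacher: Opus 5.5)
Your proof is correct and follows essentially the paper's route: transfer \eqref{eq:gradient_norm_bound} from $\hat{u}_k$ to $x_k$ using Lipschitz continuity of $\nabla F$ on the compact set together with \eqref{eq:ine_phila_1}, sum against \eqref{eq:summable_h}, and bound the minimum by the average. Your direct constant $L_f+L_{\phi,K}$ is cleaner than the paper's blockwise bound with factor $N^2L_{F,K}^2$. There is one cosmetic index slip: the extra term $-h_k(\tilde{y}_k)$ is already the $j=1$ term of the window $j\in\{-(N-1),\ldots,N\}$, so you should not enlarge the window to $j=-N$, which at $k=N$ would require the undefined $h_{-1}$.
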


\begin{proof}
(i) Let $\mathcal{K}\geq N$. We sum inequality \eqref{eq:gradient_norm_bound} for $k=N,N+1,\ldots,\mathcal{K}$, so as to obtain
\begin{align}
    \sum_{k=N}^\mathcal{K}\|\nabla F(\hat{u}_k)\|^2\leq \sum_{k=N}^{\mathcal{K}}\left(b^2\sum_{j=-(N-1)}^N(-h_{k+j-1}(\tilde{y}_{k+j-1}))\right)\nonumber&=b^2\sum_{j=-(N-1)}^N\left(\sum_{k=N}^{\mathcal{K}}(-h_{k+j-1}(\tilde{y}_{k+j-1}))\right)\nonumber\\
    &=b^2 \sum_{j=-(N-1)}^N\sum_{k'={N+j-1}}^{\mathcal{K}+j-1}(-h_{k'}(\tilde{y}_{k'}))\nonumber\\
    &\leq 2b^2N\sum_{k'=0}^{\mathcal{K}+N-1}(-h_{k'}(\tilde{y}_{k'})).\label{eq:sum_N_K}
\end{align}
By taking the limit for $\mathcal{K}\rightarrow \infty$ on both sides of  \eqref{eq:sum_N_K} and exploiting property \eqref{eq:summable_h}, we get
\begin{equation}\label{eq:normguk_summable}
    \sum_{k=0}^{\infty}\|\nabla F(\hat{u}_k)\|^2<\infty.
\end{equation}
In order to conclude, we observe that \textcolor{black}{Assumption \ref{ass4} ensures the existence of a compact set $K\subseteq \mathbb{R}^n$ containing both $\{x_k\}_{k\in\mathbb{N}}$ and $\{\hat{u}_k\}_{k\in\mathbb{N}}$, and of a corresponding Lipschitz constant $L_{\phi,K}$ satisfying Assumption \ref{ass3}. Together with Assumption \ref{ass2}, this implies that there exists $L_{F,K}>0$ such that $\nabla F$ is Lipschitz continuous on $K$. Hence, similarly to property \eqref{eq:Lipschitz-like}, it holds that
 $$
 \|U_i^T(\nabla F(x)-\nabla F(y)\|\leq L_{F,K}\|x-y\|, \quad \forall x,y\in K.
 $$
 As a consequence, we get}
\begin{align}
    \|\nabla F(\xk)\|^2&\leq 2\|\nabla F(\xk)-\nabla F(\hat{u}_k)\|^2+2\|\nabla F(\hat{u}_k)\|^2\nonumber\\
    &=2\left\|\sum_{i=1}^NU_i^T (\nabla F(\xk)-\nabla F(\xk+\Uk(\hyk-\Uk^T\xk)))\right\|^2+2\|\nabla F(\hat{u}_k)\|^2\nonumber\\
    &\leq 2N\sum_{i=1}^N\left\|U_i^T (\nabla F(\xk)-\nabla F(\xk+\Uk(\hyk-\Uk^T\xk)))\right\|^2+2\|\nabla F(\hat{u}_k)\|^2\nonumber\\
    &\leq 2N^2\textcolor{black}{L_{F,K}^2}\|\hyk-\Uk^T\xk\|^2+2\|\nabla F(\hat{u}_k)\|^2\nonumber\\
    &\leq 4N^2\textcolor{black}{L_{F,K}^2}\alpha_{\max}\mu\left(1+\frac{\tau}{2}\right)(-h_k(\yk))+2\|\nabla F(\hat{u}_k)\|^2,\label{eq:ine_squared}
\end{align}
where the first two inequalities follow by using Jensen's inequality, 
and the fourth is implied by inequality \eqref{eq:ine_phila_1}. Summing \eqref{eq:ine_squared} for $k=0,\ldots,\mathcal{K}$ leads to
\begin{equation}\label{eq:crucial_ine_for_mild_convergence}
 \sum_{k=0}^\mathcal{K}\|\nabla F(\xk)\|^2\leq  4N^2\textcolor{black}{L_{F,K}^2}\alpha_{\max}\mu\left(1+\frac{\tau}{2}\right)\sum_{k=0}^{\mathcal{K}} (-h_k(\yk)) +2\sum_{k=0}^K\|\nabla F(\hat{u}_k)\|^2.
\end{equation}
By taking the limit of \eqref{eq:crucial_ine_for_mild_convergence} for $\mathcal{K}\rightarrow \infty$, and applying \eqref{eq:normguk_summable} and \textcolor{black}{item (ii) of Lemma \ref{lem:descent}}, we get that $\{\|\nabla F(\xk)\|^2\}_{k\in\N}$ is summable. Hence, the limit $\lim_{k\rightarrow \infty}\|\nabla F(\xk)\|=0$ holds, and by continuity of $\nabla F$, we conclude that each limit point is stationary.

(ii) We bound the finite sums in the right-hand side of \eqref{eq:crucial_ine_for_mild_convergence} with their (finite) limits to obtain
\begin{equation*}
\sum_{k=0}^{\mathcal{K}}\|\nabla F(\xk)\|^2\leq  4N^2\textcolor{black}{L_{F,K}^2}\alpha_{\max}\mu\left(1+\frac{\tau}{2}\right)\sum_{k=0}^\infty (-h_k(\yk)) +2\sum_{k=0}^\infty\|\nabla F(\hat{u}_k)\|^2<\infty.
\end{equation*}
The thesis then follows by applying the definition of minimum to the above inequality.
\end{proof}

\subsection{Convergence under the KL property}\label{subsubsec:convergence_part2}

In the following, we are interested in proving the convergence of the sequence $\{x_k\}_{k\in\N}$ generated by Algorithm \ref{alg:block-VMILA} to a stationary point of $F$. To this aim, we rely on the following abstract convergence result. 

\begin{theorem}\cite[Theorem 14]{Bonettini-Prato-Rebegoldi-2024}\label{thm:abstract_convergence}
Let $\F:\R^{n}\times\R^m\to \bR$ be a proper, lower semicontinuous KL function and $\Psi:\R^n\times \R^q\to\bR$ a proper, lower semicontinuous function that is bounded from below. Consider a sequence $\{(\xk,\rho_k)\}_{k\in\N}\subset \R^n\times\R^m$ such that $\{\xk\}_{k\in\N}$ is bounded and $\{\rho_k\}_{k\in\N}$ converges, and let $\{\uk\}_{k\in\N}\subset \R^n$, $\{r_k\}_{k\in\N}\subset \R^q$, $\{\sk\}_{k\in\N}\subset \R^q$ be chosen such that the following properties hold.
\begin{enumerate}
\item[(i)] There exists a  positive real number $a$ such that 
\begin{equation}\label{eq:H1}
\Psi(\xkk,s_{k+1})+a r_k^2 \leq \Psi(\xk,\sk), \quad \forall \ k\in \N.
\end{equation}
\item[(ii)] There exists a sequence of non--negative real numbers $\{t_k\}_{k\in\N}$ with $\lim\limits_{k\to\infty} t_k = 0$ such that
\begin{equation}\label{eq:H2}
\Psi(\xkk,s_{k+1})\leq \F(\uk,\rho_k)\leq \Psi(\xk,\sk)+t_k, \quad \forall \ k\in \N.
\end{equation}
\item[(iii)] There exists a subgradient $w_k\in\partial \F(\uk,\rho_k)$ such that 
\begin{equation}\label{eq:H3}
\|w_k\|\leq p \sum_{i\in \mathcal{I}}r_{k+1-i}, \quad \forall \ k\in \N,
\end{equation}
where $p$ is a positive real number, ${\mathcal I}\subset \mathbb{Z}$ is a non-empty, finite index set, and $r_j=0$ for $j\leq 0$.
\item[(iv)] If $\{(x_{k_j},\rho_{k_j})\}_{j\in\N}$ is a subsequence of $\{(\xk,\rho_k)\}_{k\in\N}$ converging to some $ (x^*,\rho^*)\in\R^n\times \R^m$, then we have 
\begin{equation}\label{eq:H4}
\lim_{j\to\infty}\|u_{k_j}-x_{k_j}\|=0, \quad \lim_{j\to\infty} \F(u_{k_j},\rho_{k_j}) = \F(x^*,\rho^*).
\end{equation}
\item[(v)] There exists a positive real number $q>0$ such that
\begin{equation}\label{eq:H5}
\|\xkk-\xk\|\leq q r_{k}, \quad \forall \ k\in \N.
\end{equation}
\end{enumerate}
Then, the sequence $\{(\xk,\rho_k)\}_{k\in\N}$ converges to a stationary point of $\mathcal F$.    
\end{theorem}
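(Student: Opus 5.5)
The plan is the standard Kurdyka--{\L}ojasiewicz finite-length argument in the style of Attouch--Bolte--Svaiter. It is run on the monotone sequence $\Psi_k:=\Psi(\xk,\sk)$, with the KL inequality evaluated at the auxiliary points $(\uk,\rho_k)$.

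\textbf{Step 1: monotonicity and limits.} By \eqref{eq:H1}, $\{\Psi_k\}$ is nonincreasing and bounded below, so $\Psi_k\downarrow\Psi^*\in\R$. Summing \eqref{eq:H1} gives $\sum_k r_k^2<\infty$, hence $r_k\to0$. Then \eqref{eq:H5} gives $\|\xkk-\xk\|\to0$, and \eqref{eq:H3} gives $w_k\to0$. Squeezing $\F(\uk,\rho_k)$ between $\Psi_{k+1}$ and $\Psi_k+t_k$ in \eqref{eq:H2} yields $\F(\uk,\rho_k)\to\Psi^*$.

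\textbf{Step 2: the limit set.} Let $\Omega$ be the set of limit points of the bounded sequence $\{(\xk,\rho_k)\}$. Since $\rho_k\to\rho^*$ and $\|\xkk-\xk\|\to0$, the set $\Omega$ is nonempty, compact and connected. For $(x^*,\rho^*)\in\Omega$ with $(x_{k_j},\rho_{k_j})\to(x^*,\rho^*)$, \eqref{eq:H4} gives $u_{k_j}\to x^*$ and $\F(u_{k_j},\rho_{k_j})\to\F(x^*,\rho^*)$. Hence $\F\equiv\Psi^*$ on $\Omega$. Since $w_{k_j}\in\partial\F(u_{k_j},\rho_{k_j})$ with $w_{k_j}\to0$ and the function values converge, closedness of the limiting subdifferential shows that every point of $\Omega$ is stationary.

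We also need $\operatorname{dist}((\uk,\rho_k),\Omega)\to0$ along the whole sequence. I would argue by contradiction: if it failed, extract a subsequence with $\operatorname{dist}((\uk,\rho_k),\Omega)\ge\varepsilon$, pass to a further subsequence along which $(\xk,\rho_k)$ converges, and contradict the first limit in \eqref{eq:H4}.

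\textbf{Step 3: KL and summability of $r_k$.} Two degenerate cases are handled first.
\begin{itemize}
\item If $\Psi_{\bar k}=\Psi^*$ for some $\bar k$, then \eqref{eq:H1} forces $r_k=0$ for all $k\ge\bar k$, and \eqref{eq:H5} makes $\{\xk\}$ eventually constant.
\item If $\F(\uk,\rho_k)=\Psi^*$ for some $k$, the left inequality of \eqref{eq:H2} gives $\Psi_{k+1}\le\Psi^*$, which reduces to the previous case.
\end{itemize}
Otherwise, apply the uniformized KL property of $\F$ on the compact set $\Omega$, where $\F\equiv\Psi^*$. For large $k$ it gives $\xi'(\F(\uk,\rho_k)-\Psi^*)\|w_k\|\ge1$.

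The key point, and the step I expect to be the main obstacle, is removing the perturbation $t_k$. I would use only the left inequality in \eqref{eq:H2}, namely $0<\Psi_{k+1}-\Psi^*\le\F(\uk,\rho_k)-\Psi^*$, together with the fact that $\xi'$ is nonincreasing because $\xi$ is concave. This gives
\[
\xi'(\Psi_{k+1}-\Psi^*)\,\|w_k\|\ge1 .
\]
The right inequality in \eqref{eq:H2} is needed only in Step 2, for identifying the limit value.

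Set $\Delta_k=\xi(\Psi_k-\Psi^*)$. Concavity of $\xi$, then \eqref{eq:H1}, then \eqref{eq:H3} give
\[
\Delta_{k+1}-\Delta_{k+2}\ \ge\ \xi'(\Psi_{k+1}-\Psi^*)\,(\Psi_{k+1}-\Psi_{k+2})\ \ge\ \frac{a\,r_{k+1}^2}{p\sum_{i\in\mathcal I}r_{k+1-i}} .
\]
Next apply $2\sqrt{xy}\le x+y$ in the form
\[
2r_{k+1}\le \frac{1}{|\mathcal I|}\sum_{i\in\mathcal I}r_{k+1-i}+\frac{p|\mathcal I|}{a}\,(\Delta_{k+1}-\Delta_{k+2}).
\]
Sum this over $k$. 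The shifted sums on the right count each $r_j$ at most once in total, which gives $|\mathcal I|$ copies of $\frac{1}{|\mathcal I|}\sum_j r_j$ plus finitely many boundary terms. These can be absorbed into the left-hand side, and the $\Delta$-terms telescope. The conclusion is $\sum_k r_k<\infty$.

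\textbf{Step 4: conclusion.} By \eqref{eq:H5}, $\sum_k\|\xkk-\xk\|<\infty$, so $\{\xk\}$ is Cauchy and converges; $\rho_k$ converges by assumption. The limit lies in $\Omega$, so it is a stationary point of $\F$ by Step 2.
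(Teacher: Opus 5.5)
Your argument is correct. The paper gives no proof of this statement; it quotes it from \cite{Bonettini-Prato-Rebegoldi-2024}. Your route is the standard finite-length scheme of \cite{Attouch-etal-2013,Bolte-etal-2014,Ochs-2019}, on which abstract results of this kind rest: a uniformized KL inequality at the auxiliary points $(u_k,\rho_k)$, transferred to $\Psi(x_{k+1},s_{k+1})$ through the left inequality in (ii) and the monotonicity of $\xi'$, followed by the AM--GM bound and absorption of the shifted sums.

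Two points you use implicitly are worth making explicit:
\begin{itemize}
\item $\Psi(x_k,s_k)<\infty$ for $k\geq 1$. Condition (iii) forces $(u_{k-1},\rho_{k-1})\in\operatorname{dom}\partial\mathcal{F}$, hence $\mathcal{F}(u_{k-1},\rho_{k-1})<\infty$, and then (ii) bounds $\Psi(x_k,s_k)$.
\item When $\mathcal{I}$ contains nonpositive indices, the shifted sums include future terms $r_{K+2},\ldots$. This is exactly the paper's application, where $\mathcal{I}=\{-N+2,\ldots,N+1\}$. These terms are harmless because there are boundedly many of them and $r_k\to 0$.
\end{itemize}
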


Theorem \ref{thm:abstract_convergence} guarantees the convergence of the abstract iterates $\{(\xk,\rho_k)\}_{k\in\N}$ to a stationary point of the KL function $\F$, provided that the iterates comply with properties \eqref{eq:H1}-\eqref{eq:H5}. Such a result generalizes several abstract convergence results holding under the KL inequality and similar assumptions on the iterates \cite{Attouch-etal-2013,Bolte-etal-2014,Ochs-2019}. By relying on Theorem \ref{thm:abstract_convergence}, we can ensure the convergence of the iterates generated by Algorithm \ref{alg:block-VMILA} to a stationary point of $F$. To this aim, the role of the merit function $\Psi$ appearing in Theorem \ref{thm:abstract_convergence} is played by the function in \eqref{eq:psi}, whereas the other merit function $\F$ is defined as
\begin{equation}\label{eq:matF}
\F(x,\rho)= F(x)+\frac{1}{2}\rho^2, \quad \forall \ (x,\rho)\in\R^n\times\R.    
\end{equation}
Note that the function \eqref{eq:matF} is a KL function whenever $F$ is definable in an $o-$minimal structure \cite[Definition 6]{Bolte-etal-2007b}, as definable functions do satisfy the KL inequality on their domains and finite sums of definable functions in some $o-$minimal structure remain in the same structure \cite[Remark 5]{Bolte-etal-2007b}. When the specific PnP model \eqref{eq:PnP_framework} is considered, we can ensure $F$ is definable whenever the data fidelity $\phi$ and the regularizer $g_\sigma$ in \eqref{eq:g_sigma} are definable in the same structure. Interestingly, if $\phi$ is chosen as a least-squares functional, then $g_\sigma$ is definable in the same structure as $\phi$ provided that the employed neural network $N_\sigma$ is built upon definable functions such as ReLU, eLU, quadratics and SoftPlus functions, as the composition of definable functions remains definable \cite[Section 5.2]{Davis-etal-2020}. This is exactly the scenario addressed in our numerical experiments, where $\phi(x)=\frac{1}{2}\|Ax-b\|^2$ with $b\in\R^n$, $A\in\R^{m\times n}$, and the neural network used in $g_\sigma$ is defined by means of linear and eLU functions (see Section \ref{sec:numerical_experiments}).

We now state the convergence result holding for Algorithm \ref{alg:block-VMILA} and based on Theorem \ref{thm:abstract_convergence}.

\begin{cor}\label{cor:convergence_iterates}
Suppose Assumptions \ref{ass1}-\ref{ass2}-\ref{ass3}-\ref{ass4} hold. If the function $\mathcal{F}$ defined in \eqref{eq:matF} is a KL function, then $\{\xk\}_{k\in\N}$ converges to a stationary point of $F$.
\end{cor}

\begin{proof}
The proof consists in showing that Algorithm \ref{alg:block-VMILA} satisfies the properties from \eqref{eq:H1} to \eqref{eq:H5} listed in Theorem \ref{thm:abstract_convergence}.

According to Lemma \ref{lem:descent}, we immediately obtain the descent property \eqref{eq:H1} with $\Psi$ defined as in \eqref{eq:psi}, $r_k = \sqrt{-h_k(\yk)}$, $s_k = (x_{k-1},\ldots,x_{k-N})$ and $a=\lambda_{\min}\textcolor{black}{\vartheta}$.

Next, we show that property \eqref{eq:H2} holds. As noted in the proof of Lemma \ref{lem:gradient_bound}, the sequence $\{\hat{u}_k\}_{k\in\N}$ defined in \eqref{eq:huk} is bounded. Furthermore, let $\tilde{u}_k = x_k+\Uk d_k$ for all $k\in\N$. From inequalities \eqref{eq:ine_phila_3} and \eqref{eq:summable_h}, it follows that $\lim_{k\rightarrow \infty}d_k= 0$; by combining this limit with \textcolor{black}{Assumption \ref{ass4}}, we conclude that also $\{\tilde{u}_k\}_{k\in\N}$ is bounded. Now, let $\tilde{K}\subseteq \R^n$ be any compact set containing both $\{\hat{u}_k\}_{k\in\N}$ and $\{\tilde{u}_k\}_{k\in\N}$. Since $\nabla F$ is locally Lipschitz continuous by Assumptions \ref{ass2} and \ref{ass3}, the Descent Lemma for $F$ locally holds on $\tilde{K}$ \cite[Lemma 5.7]{Beck-2017}, i.e., there exists $L_{F,\tilde{K}}>0$ such that 
\begin{equation}\label{eq:descent_lemma}
F(y)\leq F(x)+ \nabla F(x)^T(y-x)+\frac{L_{F,\tilde{K}}}{2}\|y-x\|^2, \quad \forall \ x,y\in \tilde{K}.    
\end{equation}
By setting $x=\hat{u}_k$ and $y=\tilde{u}_k$ in equation \eqref{eq:descent_lemma}, we obtain
\begin{align}
F(\tilde{u}_k)&\leq F(\hat{u}_k)+\nabla F(\hat{u}_k)^T(\tilde{u}_k-\hat{u}_k)+\frac{L_{F,\tilde{K}}}{2}\|\tilde{u}_k-\hat{u}_k\|^2\nonumber\\
&=\textcolor{black}{F(\hat{u}_k)+\nabla F(\hat{u}_k)^T(x_k+U_{i_k}d_k-x_k-U_{i_k}(\hat{y}_k-U_{i_k}^Tx_k))}\nonumber\\
&\quad\textcolor{black}{+\frac{L_{F,\tilde{K}}}{2}\|x_k+U_{i_k}d_k-x_k-U_{i_k}(\hat{y}_k-U_{i_k}^Tx_k))\|^2}\nonumber\\
&=\textcolor{black}{F(\hat{u}_k)+\nabla F(\hat{u}_k)^T(U_{i_k}(\tilde{y}_k-U_{i_k}^Tx_k)-U_{i_k}(\hat{y}_k-U_{i_k}^Tx_k))}\nonumber\\
&\quad\textcolor{black}{+\frac{L_{F,\tilde{K}}}{2}\|U_{i_k}(\tilde{y}_k-U_{i_k}^Tx_k)-U_{i_k}(\hat{y}_k-U_{i_k}^Tx_k))\|^2}\nonumber\\
&=F(\hat{u}_k)+(\Uk^T \nabla F(\hat{u}_k))^T(\tilde{y}_k-\hat{y}_k)+\frac{L_{F,\tilde{K}}}{2}\|\tilde{y}_k-\hat{y}_k\|^2,\label{eq:descent_lemma_huk}
\end{align}
\textcolor{black}{where the first equality follows from the definitions of $\tilde{u}_k$ and $\hat{u}_k$ in \eqref{eq:huk}, while the second equality follows from the definition of $d_k$ in \textsc{Step 4} of Algorithm \ref{alg:block-VMILA}}.\\
By summing the term $\Uk^T\nabla f(\hat{u}_k)$ to both sides of \eqref{eq:optimality}, we can express the $\ik-$block of coordinates of $\nabla F(\hat{u}_k)$ through the following equality
\begin{equation}
U_{i_k}^T\nabla F(\hat{u}_k)=U_{i_k}^T(\nabla f(\hat{u}_k)-\nabla f(x_k))+\frac{\bk}{\ak}\Dk \Uk^T(\xk-x_{k-N})-\frac{1}{\alpha_k}D_k(\hyk-U_{i_k}^Tx_k).\label{eq:block_gradient_huk}    
\end{equation}
Plugging \eqref{eq:block_gradient_huk} in \eqref{eq:descent_lemma_huk} yields
\begin{align*}
F(\tilde{u}_k)&\leq F(\hat{u}_k)+(\nabla f(\hat{u}_k)-\nabla f(x_k))^T\Uk(\tilde{y}_k-\hat{y}_k)+\frac{L_{F,\tilde{K}}}{2}\|\tilde{y}_k-\hat{y}_k\|^2\\
&+\frac{\bk}{\ak}(x_k-x_{k-N})^T\Uk \Dk (\tilde{y}_k-\hat{y}_k)-\frac{1}{\ak}(\hat{y}_k-\Uk^T\xk)^T\Dk(\tilde{y}_k-\hat{y}_k)\\
&\leq F(\hat{u}_k)+L_f\|\hat{u}_k-x_k\| \|\tilde{y}_k-\hat{y}_k\|+\frac{L_{F,\tilde{K}}}{2}\|\tilde{y}_k-\hat{y}_k\|^2\\
&+\frac{\bk}{\ak}\|\Uk^T(\xk-x_{k-N})\|\|\Dk(\tilde{y}_k-\hat{y}_k)\|+\frac{1}{\ak}\|\hat{y}_k-\Uk^T\xk\|\|\Dk(\tilde{y}_k-\hat{y}_k)\|\\
& \leq F(\hat{u}_k)+\left(L_f+\frac{\mu}{\alpha_{\min}}\right)\|\hat{y}_k-\Uk^T\xk\|\|\tilde{y}_k-\hat{y}_k\|+\frac{L_{F,\tilde{K}}}{2}\|\tilde{y}_k-\hat{y}_k\|^2\\
&+\frac{\beta_{\max}\mu}{\alpha_{\min}}\|\tilde{y}_k-\hat{y}_k\|\|x_{k-N+1}-x_{k-N}\|,
\end{align*}
where the second inequality follows by applying the Cauchy-Schwarz inequality and the Lipschitz continuity of $\nabla f$, and the third is due to the definition of $\hat{u}_k$ in \eqref{eq:huk}, {\sc Step 2} of Algorithm \ref{alg:block-VMILA}, and equality \eqref{eq:inertial_technical}. We now sum the term $\frac{\gamma}{2}\|d_k\|^2 + \frac{\gamma}{2}\sum_{i=1}^{N-1}\|x_{k-i+1}-x_{k-i}\|^2$ to both sides of the obtained inequality and recall the definition of the function $\Psi$ in \eqref{eq:psi}, hence obtaining
\begin{align}\label{eq:Psi_uk}
\Psi(\tilde{u}_k,\xk,\ldots,x_{k-N+1})&\leq F(\hat{u}_k)+\frac{\gamma}{2}\|d_k\|^2+\frac{\gamma}{2}\sum_{i=1}^{N-1}\|x_{k-i+1}-x_{k-i}\|^2\nonumber\\
&+\frac{L_{F,\tilde{K}}}{2}\|\tilde{y}_k-\hat{y}_k\|^2+\left(L_f+\frac{\mu}{\alpha_{\min}}\right)\|\hat{y}_k-\Uk^T\xk\|\|\tilde{y}_k-\hat{y}_k\|\nonumber\\
&+\frac{\beta_{\max}\mu}{\alpha_{\min}}\|\tilde{y}_k-\hat{y}_k\|\|x_{k-N+1}-x_{k-N}\|\nonumber\\
&\leq F(\hat{u}_k)+\frac{\gamma}{2}\|d_k\|^2+\frac{\gamma}{2}\sum_{i=1}^{N-1}\|d_{k-i}\|^2+\frac{L_{F,\tilde{K}}}{2}\|\tilde{y}_k-\hat{y}_k\|^2\nonumber\\
&+\left(L_f+\frac{\mu}{\alpha_{\min}}\right)\|\hat{y}_k-\Uk^T\xk\|\|\tilde{y}_k-\hat{y}_k\|+\frac{\beta_{\max}\mu}{\alpha_{\min}}\|\tilde{y}_k-\hat{y}_k\|\|d_{k-N}\|,
\end{align}
where each norm $\|x_{k-i+1}-x_{k-i}\|$ has been bounded with $\|d_{k-i}\|$ thanks to inequality \eqref{eq:norm2}. Thus, we define $\{\rho_k\}_{k\in\N}$ as the nonnegative sequence such that
\begin{align}\label{eq:rho}
\frac{\rho_k^2}{2} &= \frac{\gamma}{2}\|d_k\|^2+\frac{\gamma}{2}\sum_{i=1}^{N-1}\|d_{k-i}\|^2+\frac{L_{F,\tilde{K}}}{2}\|\tilde{y}_k-\hat{y}_k\|^2\nonumber\\
&+\left(L_f+\frac{\mu}{\alpha_{\min}}\right)\|\hat{y}_k-\Uk^T\xk\|\|\tilde{y}_k-\hat{y}_k\|+\frac{\beta_{\max}\mu}{\alpha_{\min}}\|\tilde{y}_k-\hat{y}_k\|\|d_{k-N}\|.
\end{align}
Recalling the definition of $\F$ in \eqref{eq:matF} and using \eqref{eq:rho} in \eqref{eq:Psi_uk} leads to
\begin{equation}
\Psi(\tilde{u}_k,\xk,x_{k-1},\ldots,x_{k-N+1})\leq \F(\hat{u}_k,\rho_k).    
\end{equation}
By using \textcolor{black}{Step 6, reformulated as in \eqref{eq:Step6_rewritten},} and the definition of $\tilde{u}_k$, it immediately follows that
\begin{equation}\label{eq:left_hand_almost_there}
\Psi(x_{k+1},\xk,x_{k-1},\ldots,x_{k-N+1})\leq \Psi(\tilde{u}_k,\xk,x_{k-1},\ldots,x_{k-N+1})\leq \mathcal{F}(\hat{u}_k,\rho_k),  
\end{equation}
which is exactly the left-hand inequality of property \eqref{eq:H2} with $u_k = \hat{u}_k$. Furthermore, if we replace $F$ with $f$ and $L_{F,\tilde{K}}$ with $L_f$ in the Descent Lemma \eqref{eq:descent_lemma} and set $y=\hat{u}_k$ and $x=\xk$, we can write
\begin{align*}
f(\hat{u}_k)&\leq f(\xk)+\nabla f(\xk)^T(\hat{u}_k-\xk)+\frac{L_f}{2}\|\hat{u}_k-\xk\|^2\\
&= f(\xk)+(\Uk^T\nabla f(\xk))^T(\hat{y}_k-\Uk^T\xk)+\frac{L_{f}}{2}\|\hat{y}_k-\Uk^T\xk\|^2.
\end{align*}
By summing the term $\phi(\hat{u}_k)$ to both sides of the above inequality, and observing that $\phi(\hat{u}_k)=\phik(\hat{y}_k)$ and $\phi(\xk)=\phik(\Uk^T\xk)$ (see \eqref{eq:phik}), the following inequalities hold
\begin{align}\label{eq:so_technical_-1}
F(\hat{u}_k)
& \leq F(\xk)+ (\Uk^T\nabla f(\xk))^T(\hat{y}_k-\Uk^T\xk)+\phik(\hat{y}_k)-\phik(\Uk^T\xk)+\frac{L_{f}}{2}\|\hat{y}_k-\Uk^T\xk\|^2\nonumber\\
&\leq F(\xk) + h_k(\hat{y}_k) + \frac{L_{f}}{2}\|\hat{y}_k-\Uk^T\xk\|^2+\frac{\beta_k}{\alpha_k}(x_k-x_{k-N})^TU_{i_k}D_k(\hat{y}_k-U_{i_k}^Tx_k)\nonumber\\
&\leq F(\xk) + h_k(\hat{y}_k)+\frac{L_{f}}{2}\|\hat{y}_k-\Uk^T\xk\|^2 +\frac{\beta_{\max} \mu}{\alpha_{\min}} \|x_{k-N+1}-x_{k-N}\|\|\hat{y}_k-U_{i_k}^Tx_k\|\nonumber\\
& \leq F(\xk) + \frac{L_{f}}{2}\|\hat{y}_k-\Uk^T\xk\|^2 +\frac{\beta_{\max} \mu}{\alpha_{\min}} \|d_{k-N}\|\|\hat{y}_k-U_{i_k}^Tx_k\|,
\end{align}
where the second inequality is obtained by adding the term $\frac{1}{2\alpha_k}\|\hat{y}_k-\Uk^T\xk\|^2_{D_k}$, summing and subtracting the term $\langle\frac{\beta_k}{\alpha_k}D_kU_{i_k}^T(x_k-x_{k-N}),\hat{y}_k-U_{i_k}^Tx_k\rangle$ and recalling the definition of $h_k$ in \eqref{eq:hki}, the third follows by applying the Cauchy-Schwarz inequality, {\sc Step 2} of Algorithm \ref{alg:block-VMILA} and equality \eqref{eq:inertial_technical}, and the fourth is due to $h_k(\hat{y}_k)\leq h_k(\Uk^T\xk)=0$ and \eqref{eq:norm2}. We sum the term $\rho_k^2/2$ to both sides of \eqref{eq:so_technical_-1} and observe that $F(\xk)\leq \Psi(\xk,\xkm,\ldots,x_{k-N})$ by definition of $\Psi$ in \eqref{eq:psi}, so as to obtain
\begin{equation*}
\F(\hat{u}_k,\rho_k)\leq \Psi(\xk,\xkm,\ldots,x_{k-N}) + \frac{L_{f}}{2}\|\hat{y}_k-\Uk^T\xk\|^2 +\frac{\beta_{\max} \mu}{\alpha_{\min}} \|d_{k-N}\|\|\hat{y}_k-U_{i_k}^Tx_k\|+\frac{\rho_k^2}{2}.
\end{equation*}
At this point, we conveniently define the sequence $\{t_k\}_{k\in\N}$ as
\begin{equation}\label{eq:tk}
t_k = \frac{L_{f}}{2}\|\hat{y}_k-\Uk^T\xk\|^2 +\frac{\beta_{\max} \mu}{\alpha_{\min}} \|d_{k-N}\|\|\hat{y}_k-U_{i_k}^Tx_k\|+\frac{\rho_k^2}{2}.
\end{equation} 
Note that from \eqref{eq:rho}, Lemma \ref{lem:technical_1} and the property $\lim_{k\rightarrow \infty}h_k(\tilde{y}_k)=0$ (due to \eqref{eq:summable_h}), it follows that 
\begin{equation}\label{eq:limit_rhok}
\lim_{k\rightarrow \infty}\rho_k=0.   
\end{equation}
Based on \eqref{eq:tk}, \eqref{eq:limit_rhok} and Lemma \ref{lem:technical_1}, we also obtain that $\lim_{k\rightarrow \infty}t_k=0$, and the right-hand inequality of property \eqref{eq:H2} is finally proved.

We move on to property \eqref{eq:H3}. By applying Lemma \ref{lem:gradient_bound} and relation $\sqrt{u+v}\leq \sqrt{u}+\sqrt{v}$, we have
\begin{align*}
\|\nabla \F(\hat{u}_k,\rho_k)\|\leq \|\nabla F(\hat{u}_k)\|+\rho_k
\leq b\sum_{j=-(N-1)}^{N}\sqrt{-h_{k+j-1}(\tilde{y}_{k+j-1})}+\rho_k.
\end{align*}
From the definition of $\rho_k$ in \eqref{eq:rho} and Lemma \ref{lem:technical_1}, it is easy to prove the existence of $b_{\rho}>0$ such that
\begin{equation*}
    \rho_k \leq b_{\rho}\sum_{j=-(N-1)}^{N}\sqrt{-h_{k+j-1}(\tilde{y}_{k+j-1})}.
\end{equation*}
Then, property \eqref{eq:H3} follows by setting $p=b+b_{\rho}$ and $\mathcal{I} = \{-N+2,\ldots,N+1\}$. 

Property \eqref{eq:H4} is obtained as follows. From Lemma \ref{lem:technical_1}, we have that $\|\hat{u}_k-\xk\|=\|\hat{y}_k-\Uk^T\xk\|\leq \sqrt{2\alpha_{\max}\mu \left(1+\frac{\tau}{2}\right)(-h_k(\tilde{y}_k))}$; since equation \eqref{eq:summable_h} implies $\lim_{k\rightarrow \infty} h_k(\tilde{y}_k)=0$, it must be $\lim_{k\rightarrow \infty} \|\hat{u}_k-\xk\|=0$ and the first limit in \eqref{eq:H4} holds. The second limit in \eqref{eq:H4} trivially follows by continuity of the function $\F$.

As for property \eqref{eq:H5}, it suffices to note that 
$$
\|\xkk-\xk\|\leq \|d_k\|\leq \sqrt{2\alpha_{\max}\mu}\left(\sqrt{1+\frac{\tau}{2}}+\sqrt{\frac{\tau}{2}}\right)\sqrt{-h_k(\tilde{y}_k)},
$$
which follows by {\sc Step 6} of Algorithm \ref{alg:block-VMILA} and Lemma \ref{lem:technical_1}, and set $q = \sqrt{2\alpha_{\max}\mu}\left(\sqrt{1+\frac{\tau}{2}}+\sqrt{\frac{\tau}{2}}\right)$.

Since properties from \eqref{eq:H1} to \eqref{eq:H5} hold, $\{x_k\}_{k\in\N}$ is bounded by \textcolor{black}{Assumption \ref{ass4}}, and $\{\rho_k\}_{k\in\N}$ converges as concluded in \eqref{eq:limit_rhok}, the assumptions of Theorem \ref{thm:abstract_convergence} hold, and thus the iterates $\{(\xk,\rho_k)\}_{k\in\N}$ converge to a stationary point $(x^*,0)\in\R^n\times \R$ of $\F$. Since $\nabla \F(x,\rho) = (\nabla F(x),\rho)$ for all $(x,\rho)\in \R^{n}\times \R$, it follows that the limit point $x^*$ of $\{\xk\}_{k\in\N}$ must be stationary for $F$, and hence the proof is complete.
\end{proof}

In the following, we are interested in deriving convergence rates on the function values along the iterates, by assuming that the merit function $\Psi$ defined in \eqref{eq:psi} satisfies the KL property at the limit point with $\phi(t) = ct^{1-\theta}$, being $c>0$ and $\theta\in [0,1)$ depending on the point of interest. This type of result is quite common in the KL literature, see e.g. \cite{chorobura2023random,Chouzenoux-Pesquet-Repetti-2016,Frankel-Garrigos-Peypouquet-2015}. 

Our analysis relies on the following Lemma, which contains convergence rates for a non-negative non-increasing sequence complying with a certain recurrence relation. Such a result is similar to \cite[Lemma 8]{Necoara-2025}, which regards any positive decreasing sequence $\{\Delta_k\}_{k\in\N}$ satisfying $\Delta_{k}^{1+\zeta}\leq \tilde{c}(\Delta_k-\Delta_{k+1})$ with $\tilde{c}>0$, $\zeta>-1$. In our result, we replace $\Delta_k-\Delta_{k+1}$ with $\Delta_{k-1}-\Delta_{k+1}$, which is needed to take into account the presence of an inertial term in Algorithm \ref{alg:block-VMILA}.

\begin{lemma}\label{lem:attouch_modified}
Let $\{\Delta_k\}_{k\in\N}\subseteq \R$ be a non-negative, monotone non-increasing sequence such that $\lim\limits_{k\rightarrow \infty} \Delta_k = 0$. Suppose there exist $\theta\in(0,1)$ and $\tilde{c}>0$ such that the following inequality holds:
\begin{equation}\label{eq:crucial_for_rate}
    \Delta_{k+1}^{2\theta} \leq \tilde{c}(\Delta_{k-1}-\Delta_{k+1}), \quad \forall \ k\in\N.
\end{equation}
Then, the following statements hold true.
\begin{itemize}
    \item[(i)] If $\theta\in (\frac{1}{2},1)$, there exists $C>0$ such that, for all $k\in\N$, we have
    \begin{equation*}
        \Delta_k\leq C k^{-\frac{1}{2\theta-1}}.
    \end{equation*}
    \item[(ii)] If $\theta \in (0,\frac{1}{2}]$, there exists $C>0$, $\omega\in(0,1)$ such that, for all $k\in\N$, we have
    \begin{equation*}
        \Delta_k\leq C\omega^k.
    \end{equation*}
\end{itemize}
\end{lemma}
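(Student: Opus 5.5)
The natural approach is to pass to the subsequences of even and odd indices, which turns the two-step recurrence \eqref{eq:crucial_for_rate} into a standard one-step recurrence. For $j\in\N$ we set $a_j=\Delta_{2j}$ and $b_j=\Delta_{2j+1}$. Applying \eqref{eq:crucial_for_rate} with $k=2j+1$ gives $a_{j+1}^{2\theta}\leq \tilde c(a_j-a_{j+1})$. Applying it with $k=2j+2$ gives $b_{j+1}^{2\theta}\leq \tilde c(b_j-b_{j+1})$. Both subsequences are non-negative, non-increasing and tend to zero. Once rates are proved for each subsequence, the rate for $\{\Delta_k\}_{k\in\N}$ follows with a modified constant, since $j\sim k/2$. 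It is also convenient to use monotonicity to get the weaker form $\Delta_{k+1}^{2\theta}\le \tilde c(\Delta_{k-1}-\Delta_{k+1})$ directly on every step.

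\emph{Case (ii), $\theta\in(0,\frac12]$.} Since $\Delta_k\to0$, there is $\bar k$ with $\Delta_k\le 1$ for $k\ge \bar k$. Because $2\theta\le 1$, this gives $\Delta_{k+1}\le \Delta_{k+1}^{2\theta}$ for such $k$. The recurrence then yields $(1+\tilde c)a_{j+1}\le \tilde c\, a_j$, so $a_{j+1}\le \frac{\tilde c}{1+\tilde c}a_j$. Iterating gives geometric decay of $\{a_j\}$ and, in the same way, of $\{b_j\}$. Setting $\omega=\sqrt{\tilde c/(1+\tilde c)}$ converts the rate in $j$ into a rate in $k$. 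Finally, we enlarge $C$ so that the finitely many indices $k<\bar k$ are also covered.

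\emph{Case (i), $\theta\in(\frac12,1)$.} For the one-step sequence $\{a_j\}$ we follow the classical Attouch--Bolte argument. We may assume $a_j>0$ for all $j$, since otherwise the sequence is eventually zero and the claim is trivial. Let $\zeta(s)=s^{1-2\theta}$, which is decreasing. We split on the ratio $a_{j+1}^{2\theta}/a_j^{2\theta}$.
\begin{itemize}
\item If $a_{j+1}^{2\theta}\ge \frac12 a_j^{2\theta}$, then $a_j-a_{j+1}\le 2\tilde c\,a_j^{-2\theta}$ is not the bound we want. Instead we use
\[
\zeta(a_{j+1})-\zeta(a_j)=(2\theta-1)\int_{a_{j+1}}^{a_j}s^{-2\theta}\,ds\ge (2\theta-1)(a_j-a_{j+1})a_j^{-2\theta}\ge \tfrac{2\theta-1}{2}(a_j-a_{j+1})a_{j+1}^{-2\theta}\ge \tfrac{2\theta-1}{2\tilde c}.
\]
\item Otherwise $a_{j+1}\le 2^{-1/(2\theta)}a_j$. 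Then $\zeta(a_{j+1})-\zeta(a_j)\ge (2^{(2\theta-1)/(2\theta)}-1)\zeta(a_j)$, and this is bounded below by a positive constant because $\zeta(a_j)\ge\zeta(a_0)>0$.
\end{itemize}
In both cases $\zeta(a_{j+1})-\zeta(a_j)\ge \kappa>0$. Summing gives $a_j^{1-2\theta}\ge \kappa j$, that is, $a_j\le (\kappa j)^{-1/(2\theta-1)}$. The same argument applies to $\{b_j\}$. Combining the two with $j\ge (k-1)/2$ and absorbing $k=0$ into the constant gives $\Delta_k\le Ck^{-1/(2\theta-1)}$.

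The only delicate point is the splitting step in case (i). There the recurrence controls $a_j-a_{j+1}$ by $a_{j+1}^{2\theta}$, not by $a_j^{2\theta}$, which is why the ratio dichotomy is needed. Everything else is bookkeeping to pass from the parity subsequences back to the full sequence and to handle initial indices, for example the convention at $k=0$ where $\Delta_{-1}$ appears.
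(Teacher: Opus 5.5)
Your proof is correct and is essentially the paper's argument: the paper uses the same ratio dichotomy (with a general $R>1$ where you take $R=2$) to get a uniform lower bound on the increments of $s\mapsto s^{1-2\theta}$ between $\Delta_{k-1}$ and $\Delta_{k+1}$, and it handles case (ii) by the same contraction $\Delta_{k+1}\le\frac{\tilde c}{1+\tilde c}\Delta_{k-1}$ iterated along each parity class. The only difference is cosmetic: in case (i) you telescope along the even and odd subsequences separately, whereas the paper sums $\Delta_{k+1}^{1-2\theta}-\Delta_{k-1}^{1-2\theta}\ge\hat\mu$ over all $k$ and merges the two resulting telescopes via $\Delta_{K+1}\le\Delta_K$; that same monotonicity would let you bound odd terms by even ones, which removes the separate treatment of $\{b_j\}$ and the small index $k=1$.
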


\begin{proof}
See Appendix \ref{app:proof_of_lemma}.
\end{proof}

The convergence rates under the KL assumption on $\Psi$ are stated in the following Theorem. 

\begin{theorem}\label{thm:convergence_rate}
Suppose the assumptions of Corollary \ref{cor:convergence_iterates} are satisfied and let $x^*\in\R^n$ be the unique limit point of the sequence $\{\xk\}_{k\in\N}$ generated by Algorithm \ref{alg:block-VMILA}. Assume that the function $\Psi:\R^n\times\cdots\times \R^n\rightarrow \R$ defined in \eqref{eq:psi} satisfies the KL property at $(x^*,\ldots,x^*)$ with $\xi(t) = c t^{1-\theta}$, with $c>0$, $\theta\in [0,1)$. Then the following convergence rates hold.
	\begin{itemize}
		\item[(i)] If $\theta=0$, the sequence $\{x_k\}_{k\in\N}$ terminates in a finite number of iterations.
		\item[(ii)]If $\theta\in(0,\frac{1}{2}]$, there exist $C>0 $, $\omega\in(0,1)$ such that
	\begin{equation}\label{eq:linear}
			F(x_{kN})-F(x^*)\leq C \omega^k.
		\end{equation}
		\item[(iii)] If $\theta\in(\frac{1}{2},1)$, there exists $C>0$ such that
		\begin{equation}\label{eq:sublinear}
			F(x_{kN})-F(x^*)\leq Ck^{-\frac{1}{2\theta-1}}.
		\end{equation}
	\end{itemize}
\end{theorem}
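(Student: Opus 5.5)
The plan is to work with the merit values $\Psi_k := \Psi(x_k,x_{k-1},\ldots,x_{k-N})$ and set $\Delta_k := \Psi_{kN} - F(x^*)$, sampled once per sweep of $N$ iterations. By Lemma \ref{lem:descent}(i), $\{\Psi_k\}$ is non-increasing. By Corollary \ref{cor:convergence_iterates}, $x_k\to x^*$; together with $d_k\to 0$ and the continuity of $F$, this gives $\Psi_k\to \Psi(x^*,\ldots,x^*)=F(x^*)$. Hence $\Delta_k\geq 0$, $\Delta_k$ is non-increasing, and $\Delta_k\to 0$. Moreover $F(x_{kN})-F(x^*)\leq \Delta_k$, since $\Psi$ dominates $F$ in its first argument. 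So it suffices to obtain the rates for $\Delta_k$ from Lemma \ref{lem:attouch_modified}.

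If $\Delta_{k}=0$ for some $k$, then every later $-h_j(\tilde y_j)$ vanishes by \eqref{eq:suff_decr_phi}. Lemma \ref{lem:technical_1} then gives $d_j=0$, so the iterates are stationary from that point on, and all claims hold trivially. Otherwise $\Psi_k>F(x^*)$ for all $k$, and the sequence eventually lies in the KL neighbourhood of $(x^*,\ldots,x^*)$. There the KL inequality with $\xi(t)=ct^{1-\theta}$ reads
\[
\Delta^{\theta} \leq c(1-\theta)\,\|\nabla\Psi(z)\|.
\]
The key step is to bound $\|\nabla\Psi(z)\|$ at $z=(x_{m},\ldots,x_{m-N})$ by a sum of $\sqrt{-h_j(\tilde y_j)}$ over a finite window of indices around $m$. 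Differentiating \eqref{eq:psi} gives
\[
\nabla\Psi(z)=\bigl(\nabla F(z_1)+\gamma(z_1-z_2),\ \gamma(2z_2-z_1-z_3),\ \ldots,\ \gamma(z_{N+1}-z_N)\bigr).
\]
The differences $z_i-z_{i+1}$ are bounded by $\|d_j\|$ through \eqref{eq:norm2}, hence by $\sqrt{-h_j}$ via Lemma \ref{lem:technical_1}. For $\nabla F(x_m)$ I use the chain from the proof of the previous theorem: $\|\nabla F(x_m)\|\leq \|\nabla F(\hat u_m)\|+N L_{F,K}\|\hat y_m-U_{i_m}^Tx_m\|$, and Lemma \ref{lem:gradient_bound} bounds $\|\nabla F(\hat u_m)\|$. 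The result is
\[
\|\nabla\Psi(z_m)\|^2\leq B\sum_{j=m-2N}^{m+N}(-h_j(\tilde y_j)).
\]

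By \eqref{eq:suff_decr_phi}, $\vartheta\lambda_{\min}\sum_{j=a}^{b-1}(-h_j)\leq \Psi_a-\Psi_b$. The window $[m-2N,\,m+N]$ spans both backward and forward iterates. I therefore choose $m$ so that the window sits inside a span of sweeps. The natural choice is $m=kN$, with the window inside $[(k-2)N,(k+1)N]$. Applying KL at the point indexed $(k+1)N$ (using monotonicity so that $\Delta_{k+1}^{2\theta}\le \Delta(z_m)^{2\theta}$ for an appropriate $m\leq (k+1)N$) then yields an inequality of the form
\[
\Delta_{k+1}^{2\theta}\leq \tilde c\,(\Delta_{k-2}-\Delta_{k+1}).
\]
This is the main obstacle. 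The lag in \eqref{eq:crucial_for_rate} is $2$ rather than $3$, so I would either regroup the sweep into blocks of $3N/2$ steps, or redefine $\Delta$ on a coarser subsequence and then reindex. The index bookkeeping has to be arranged so that the forward terms $-h_j$ with $j$ up to $m+N$ are covered by a $\Psi$-difference ending at the sample where KL is applied. Lemma \ref{lem:attouch_modified}'s argument tolerates any fixed lag, so I expect this to be a bookkeeping adjustment, possibly using a coarser grid.

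Once the recursion holds, Lemma \ref{lem:attouch_modified}(ii) gives the linear rate \eqref{eq:linear} for $\theta\in(0,\tfrac12]$, and Lemma \ref{lem:attouch_modified}(i) gives \eqref{eq:sublinear} for $\theta\in(\tfrac12,1)$. For $\theta=0$ the KL inequality gives $1\leq c\|\nabla\Psi(z_m)\|$ whenever $\Delta>0$, while the right-hand side tends to $0$. So $\Delta$ must vanish after finitely many iterations, and by the argument above the iterates then stop moving, giving (i).
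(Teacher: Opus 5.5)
Your plan for (ii)--(iii) is the paper's proof. You sample $\Delta_k=\Psi(x_{kN},\ldots,x_{(k-1)N})-F(x^*)$, bound $\|\nabla\Psi\|^2$ by a window of terms $-h_j(\tilde y_j)$ via Lemma~\ref{lem:gradient_bound}, \eqref{eq:ine_squared} and Lemma~\ref{lem:technical_1}, telescope with Lemma~\ref{lem:descent}, and apply Lemma~\ref{lem:attouch_modified}.

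The ``main obstacle'' you leave open, however, comes only from an index miscount. Lemma~\ref{lem:gradient_bound} at $m$ involves only $j\in\{m-N,\ldots,m+N-1\}$. The extra term from \eqref{eq:ine_squared} has $j=m$. The $\gamma$-components of $\nabla\Psi(z_m)$, with $z_m=(x_m,\ldots,x_{m-N})$, involve $x_{m+1-j}-x_{m-j}$ for $j=1,\ldots,N$, i.e.\ $\|d_{m-N}\|,\ldots,\|d_{m-1}\|$. Nothing reaches $m-2N$ or $m+N$. Hence
\[
\bigl(\Psi(z_m)-F(x^*)\bigr)^{2\theta}\le \tilde b\sum_{j=m-N}^{m+N-1}\bigl(-h_j(\tilde y_j)\bigr)\le \frac{\tilde b}{\vartheta\lambda_{\min}}\bigl(\Psi(z_{m-N})-\Psi(z_{m+N})\bigr).
\]
For $m=kN$ this reads $\Delta_k^{2\theta}\le\tilde c(\Delta_{k-1}-\Delta_{k+1})$. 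Bounding the left side below by $\Delta_{k+1}^{2\theta}$ gives exactly \eqref{eq:crucial_for_rate}, with no regrouping.

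Strictly, your inclusive endpoint $(k+1)N$ would require $\Psi(z_{(k+1)N+1})$, so your lag would be four, not three. Either way a fixed lag $L\ge 2$ is harmless. Set $E_j=\Delta_{(L-1)j}$; it satisfies \eqref{eq:crucial_for_rate} because $\Delta$ is non-increasing and $(L-1)j-1\ge (L-1)(j-1)$. The rates then transfer back to $\Delta_k$ up to constants.

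For (i) you take a slightly different route from the paper. You use the KL inequality of $\Psi$ itself, the stated hypothesis with $\xi'\equiv c$, against the same vanishing window bound. You also treat explicitly the case where $\Psi(z_m)$ reaches $F(x^*)$, via \eqref{eq:suff_decr_phi} and \eqref{eq:ine_phila_3}. The paper instead argues by contradiction, using the KL inequality for $\mathcal{F}$ at $(x^*,0)$ together with \eqref{eq:H1} and \eqref{eq:H3}. In (ii)--(iii) it handles membership in the KL region by an ``abuse of notation.'' Your explicit treatment of $\Delta_k=0$ makes that point rigorous, and your version of (i) stays within the hypothesis actually assumed on $\Psi$.
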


\begin{proof}
Regarding item (i), we assume by contradiction that there exists an infinite subset of indices $\{k_j\}_{j\in\N}$ such that $x_{k_j+1}\neq x_{k_j}$ for all $j\in\N$. From property \eqref{eq:H5}, we have that $r_{k_j}>0$ for all $j\in\N$. Since $\lim_{k\rightarrow \infty}(-h_k(\tilde{y}_k))=0$ due to \eqref{eq:summable_h}, it follows by \eqref{eq:ine_phila_1} that $\lim_{k\rightarrow \infty} \|\hat{u}_k-x_k\|=\lim_{k\rightarrow \infty}\|\hat{y}_k-\Uk^T\xk\| =0$. Then, since Corollary \ref{cor:convergence_iterates} guarantees that $\lim_{k\rightarrow \infty}x_k=x^*$, we have $\lim_{k\rightarrow \infty}\hat{u}_k=x^*$. Let $U$ and $\nu$ be such that Definition \ref{def:KL} is satisfied for the function $\F$ defined in \eqref{eq:matF} at the point $(x^*,0)$. Since $\{\hat{u}_k\}_{k\in\N}$ is converging to $x^*$, $\{\rho_k\}_{k\in\N}$ is converging to zero and $\F(\hat{u}_{k_j},\rho_{k_j})>\F(x^*,0)$ thanks to \eqref{eq:left_hand_almost_there}, we have $(\hat{u}_{k_j},\rho_{k_j})\in U\cap \{(x,\rho)\in\R^n\times \R: \ \F(x^*,0) < \F(x,\rho) < \F(x^*,0)+\nu\}$ for all sufficiently large $j\in\N$. By an abuse of notation, we assume such an inclusion holds for all $j\in\N$. Then, the KL inequality can be evaluated at the point $(\hat{u}_{k_j},\rho_{k_j})$, namely
\begin{equation}\label{eq:gradient_lower_bound}
    \|\nabla \F(\hat{u}_{k_j},\rho_{k_j})\|\geq \frac{1}{\xi'(\F(\hat{u}_{k_j},\rho_{k_j})-\F(x^*,0))}=\frac{1}{c}, \quad \forall \ j\in\N,
\end{equation}
where the equality follows from $\xi(t)=ct^{1-\theta}=ct$. By combining \eqref{eq:gradient_lower_bound} and property \eqref{eq:H3}, we obtain
\begin{equation}\label{eq:r_lower_bound}
\sum_{i\in\mathcal{I}}r_{k_j+1-i}\geq \frac{1}{pc}, \quad \forall \ j\in\N.
\end{equation}
Squaring both sides of equation \eqref{eq:r_lower_bound} and applying Jensen's inequality leads to
\begin{equation}\label{eq:sumr_lower_bound}
\sum_{i\in\mathcal{I}}r^2_{k_j+1-i}\geq \frac{1}{|\mathcal{I}|p^2c^2}, \quad \forall \ j\in \N.    
\end{equation}
By summing property \eqref{eq:H1} with $\Psi$ introduced in \eqref{eq:psi}, $k=k_j+1-i$ and for $i\in \mathcal{I}$, and using \eqref{eq:sumr_lower_bound}, we get the following inequality
\begin{equation}\label{eq:absurd}
\frac{a}{|\mathcal{I}|p^2c^2}\leq \sum_{i\in \mathcal{I}}(\Psi(x_{k_j+1-i},\ldots,x_{k_j+1-i-N})-\Psi(x_{k_j+2-i},\ldots,x_{k_j+2-i-N})), \quad \forall \ j\in\N. 
\end{equation}
On the other hand, since $\lim_{k\rightarrow \infty} x_{k}=x^*$ and $\Psi$ is a continuous function, we have that $$
\lim_{j\rightarrow \infty}(\Psi(x_{k_j+1-i},\ldots,x_{k_+1-i-N})-\Psi(x_{k_j+2-i},\ldots,x_{k_j+2-i-N}))=0,
$$
which contradicts \eqref{eq:absurd}. Then item (i) follows.

We now consider items (ii) and (iii). From Corollary \ref{cor:convergence_iterates}, there exists a unique point $x^*\in\R^n$ such that $\lim_{k\rightarrow \infty}x_k=x^*$. Let $U$ and $\nu$ be such that Definition \ref{def:KL} is satisfied for the function $\Psi$ at the point $(x^*,\ldots,x^*)$. Thanks to Lemma \ref{lem:descent} and Corollary \ref{cor:convergence_iterates}, we have $(x_k,x_{k-1},\ldots,x_{k-N})\in U \cap \{y\in\R^n\times \ldots \times \R^n: \ \Psi(x^*,\ldots,x^*)< \Psi(y) < \Psi(x^*,\ldots,x^*)+\nu\}$ for all sufficiently large $k\in\N$; with abuse of notation, we will assume such an inclusion holds for all $k\in\N$. Then, the KL inequality can be evaluated at the point $(x_k,x_{k-1},\ldots,x_{k-N})$, i.e., 
\begin{equation*}
\xi'(\Psi(x_k,x_{k-1},\ldots,x_{k-N})-\Psi(x^*,x^*,\ldots,x^*))\|\nabla \Psi(x_k,x_{k-1},\ldots,x_{k-N})\|\geq 1.
\end{equation*}
By recalling that $\xi(t) = c t^{1-\theta}$, and squaring both sides of the above inequality, we obtain
\begin{equation}\label{eq:KL_Psi}
(\Psi(x_k,x_{k-1},\ldots,x_{k-N})-\Psi(x^*,x^*,\ldots,x^*))^{2\theta}\leq c^2 \|\nabla \Psi(x_k,x_{k-1},\ldots,x_{k-N})\|^2.    
\end{equation}
Note that
\begin{equation*}
\nabla \Psi(z_1,z_2,\ldots,z_{N+1}) = \left(\begin{array}{c}
       \nabla F(z_1)+\gamma(z_1-z_2)\\
        \gamma({z_2-z_1}+z_2-z_3)\\
        \vdots\\
        \gamma(z_N-z_{N-1}+z_N-z_{N+1})\\
        \gamma(z_{N+1}-z_N)
\end{array}\right).
\end{equation*}
Therefore, from inequality \eqref{eq:KL_Psi} we get 
\begin{align*}
(\Psi(x_k,x_{k-1},\ldots&,x_{k-N})-\Psi(x^*,x^*,\ldots,x^*))^{2\theta}\leq c^2 \left(\|\nabla F(\xk)\|+2\gamma \sum_{j=1}^{N}\|x_{k+1-j}-x_{k-j}\|  \right)^2\\
&\leq 2c^2\|\nabla F(\xk)\|^2+8c^2\gamma^2N\sum_{j=1}^{N}\|x_{k+1-j}-x_{k-j}\|^2\\
& \leq 2c^2\|\nabla F(\xk)\|^2+16c^2\alpha_{\max}\mu\left(\sqrt{1+\frac{\tau}{2}}+\sqrt{\frac{\tau}{2}}\right)^2\gamma^2 N\sum_{j=1}^{N}(-h_{k-j}(\tilde{y}_{k-j})),
\end{align*}
where the second inequality follows by  
applying relation $(u+v)^2\leq 2u^2+2v^2$ and Jensen's inequality, and the third inequality holds by combining \eqref{eq:norm2} and \eqref{eq:ine_phila_3}. We continue by applying \eqref{eq:ine_squared} and Lemma \ref{lem:gradient_bound} to the above inequality, thus obtaining
\begin{align*}
&(\Psi(x_k,x_{k-1},\ldots,x_{k-N})-\Psi(x^*,x^*,\ldots,x^*))^{2\theta}\leq 4b^2c^2\sum_{j=-(N-1)}^N(-h_{k+j-1}(\tilde{y}_{k+j-1}))\\
&+8N^2L_f^2\alpha_{\max}\mu c^2\left(1+\frac{\tau}{2}\right)(-h_k(\tilde{y}_k))+16c^2\alpha_{\max}\mu\left(\sqrt{1+\frac{\tau}{2}}+\sqrt{\frac{\tau}{2}}\right)^2\gamma^2N\sum_{j=1}^{N}(-h_{k-j}(\tilde{y}_{k-j})).
\end{align*}
In conclusion, there exists a sufficiently large $\tilde{b}>0$ such that 
\begin{equation*}
(\Psi(x_k,x_{k-1},\ldots,x_{k-N})-\Psi(x^*,x^*,\ldots,x^*))^{2\theta}\leq \tilde{b}\sum_{j=-(N-1)}^N(-h_{k+j-1}(\tilde{y}_{k+j-1})),    
\end{equation*}
and an application of Lemma \ref{lem:descent} implies
\begin{align*}
&(\Psi(x_k,x_{k-1},\ldots,x_{k-N})-\Psi(x^*,x^*,\ldots,x^*))^{2\theta}\\
&\leq \frac{\tilde{b}}{\textcolor{black}{\vartheta} \lambda_{\min}}\sum_{j=-(N-1)}^N(\Psi(x_{k+j-1},x_{k+j-2},\ldots,x_{k+j-1-N})-\Psi(x_{k+j},x_{k+j-1},\ldots,x_{k+j-N}))\\
&= \frac{\tilde{b}}{\textcolor{black}{\vartheta} \lambda_{\min}}(\Psi(x_{k-N},x_{k-N-1},\ldots,x_{k-2N})-\Psi(x_{k+N},x_{k+N-1},\ldots,x_{k})).
\end{align*}
Setting $k=\tilde{k}N$ with $\tilde{k}\in\mathbb{N}$ yields
\begin{align}
&(\Psi(x_{\tilde{k}N},x_{\tilde{k}N-1},\ldots,x_{(\tilde{k}-1)N})-\Psi(x^*,x^*,\ldots,x^*))^{2\theta}\nonumber\\
&\leq \frac{\tilde{b}}{\textcolor{black}{\vartheta} \lambda_{\min}}(\Psi(x_{(\tilde{k}-1)N},x_{(\tilde{k}-1)N-1},\ldots,x_{(\tilde{k}-2)N})-\Psi(x_{(\tilde{k}+1)N},x_{(\tilde{k}+1)N-1},\ldots,x_{\tilde{k}N}).\label{eq:tech_rate_1}
\end{align}
By defining the following sequence
\begin{equation}\label{eq:Delta_Psi}
\Delta_{\tilde{k}} = \Psi(x_{\tilde{k}N},x_{\tilde{k}N-1},\ldots,x_{(\tilde{k}-1)N})-\Psi(x^*,x^*,\ldots,x^*), \quad \forall \ \tilde{k}\in\N,    
\end{equation}
we can rewrite inequality \eqref{eq:tech_rate_1} as follows
\begin{equation}\label{eq:tech_rate_2}
\Delta_{\tilde{k}}^{2\theta}\leq \frac{\tilde{b}}{\textcolor{black}{\vartheta}\lambda_{\min}}(\Delta_{\tilde{k}-1}-\Delta_{\tilde{k}+1}), \quad \forall \ \tilde{k}\in\N.    
\end{equation}
Since $\{\Delta_{\tilde{k}}\}_{\tilde{k}\in\N}$ is monotone non increasing by Lemma \ref{lem:descent}, we have $\Delta_{\tilde{k}+1}^{2\theta}\leq \Delta_{\tilde{k}}^{2\theta}$. Hence, by setting $\tilde{c}=\tilde{b}/(\textcolor{black}{\vartheta}\lambda_{\min})$, inequality \eqref{eq:tech_rate_2} yields
\begin{equation*}
\Delta_{\tilde{k}+1}^{2\theta}\leq \tilde{c}(\Delta_{\tilde{k}-1}-\Delta_{\tilde{k}+1}), \quad \forall \ \tilde{k}\in\N.   
\end{equation*}
At this point, items (ii) and (iii) follow by applying Lemma \ref{lem:attouch_modified} and observing that
\begin{equation*}
\Delta_{\tilde{k}}= \Psi(x_{\tilde{k}N},x_{\tilde{k}N-1},\ldots,x_{(\tilde{k}-1)N})-F(x^*)\geq F(x_{\tilde{k}N})-F(x^*). 
\end{equation*}
\end{proof}

\section{Numerical experiments}\label{sec:numerical_experiments}
\textcolor{black}{In this section, we assess the effectiveness of Block-PHILA on two imaging inverse problems formulated within the GS Plug-and-Play framework: image deblurring and image super-resolution. Specifically, we consider the following GS PnP optimization problem 
\begin{equation}\label{eq:GS_PNP_problem}
    \underset{x\in\R^n}{\operatorname{argmin}} \ F(x)\equiv \mathcal{D}(x) +\frac{\textcolor{black}{\varrho}}{2}\|x-N_{\sigma}(x)\|^2,
\end{equation}
where the data fidelity term $\mathcal{D}:\mathbb{R}^n\rightarrow\mathbb{R}$ is continuously differentiable and bounded from below. Its specific expression depends on the noise corrupting the data. Hereafter we consider both additive Gaussian noise (Section \ref{sec:image_deblurring} and Section \ref{sec:image_SR}) and Cauchy noise (Section \ref{sec:largescale}). If $\mathcal{D}$ is convex along the coordinates on $\mathbb{R}^n$, the optimization problem \eqref{eq:GS_PNP_problem} can be numerically solved by Block-PHILA adopting the following splitting of the objective function:
\begin{equation}\label{eq:splitting_PnP}
    \left\{
    \begin{array}{l}
    \displaystyle
    \phi(x) = \mathcal{D}(x) \\
    \\
    \displaystyle
     f(x) =  \frac{\textcolor{black}{\varrho}}{2}\|x-N_{\sigma}(x)\|^2  \\
    \end{array}
    \right..
\end{equation} } 
\textcolor{black}{However, since the objective function in \eqref{eq:GS_PNP_problem} is continuously differentiable, regardless of whether $\mathcal{D}$ is convex along coordinates or not, the same problem can alternatively be cast into formulation \eqref{eq:problem} by treating the whole objective as a smooth term, namely: 
\begin{equation}\label{eq:all_diff_splitting_PnP}
    \left\{
    \begin{array}{l}
    \displaystyle
    \phi(x) = 0\\
    \\
    \displaystyle
     f(x) = \mathcal{D}(x)
     + \frac{\textcolor{black}{\varrho}}{2}\|x-N_{\sigma}(x)\|^2  \\
    \end{array}
    \right..
\end{equation} 
When possible, comparing the two splittings \eqref{eq:splitting_PnP} and \eqref{eq:all_diff_splitting_PnP} allows us to quantify the benefit of retaining the forward--backward structure, as opposed to performing a plain gradient step on the whole objective function. In addition, we provide a preliminary analysis on the GPU memory required to compute the block GS denoiser with respect to the image size (Section \ref{subsec:memory_consumption}). }

\subsection{\textcolor{black}{Experimental setting}}\label{sec:setting} 
\textcolor{black}{This section outlines the experimental setup, implementation details, and baseline methods considered in our numerical evaluation.}

\paragraph{{GS denoiser}} The function $N_\sigma$ is the UNet proposed in \cite{Zhang-et-al-2021} and subsequently trained within the context of GS denoiser in \cite{hurault2022gradient}. We remark that the authors of \cite{hurault2022gradient} replaced the ReLU activation functions in the original UNet in \cite{Zhang-et-al-2021} with eLU functions in order to ensure the differentiability of $N_\sigma$ with respect to its input. Hence, in our tests, we use the network's weights released with the code of \cite{hurault2022gradient} within the DeepInverse library \cite{tachella2025deepinverse}. For the restricted denoiser, we selected the patches of the image extending them with a padding contour of size 16 pixels. {Finally, we remark that under this choice $\phi$ and $f$ in \eqref{eq:splitting_PnP} readily satisfy Assumption \ref{ass1}.}

\paragraph{{The Block-PHILA variants}} {Block-PHILA is determined by four independent design choices, each with two possible options.
}

\begin{itemize}
    \item {\emph{The splitting of the objective function:}
    this choice is denoted by {FB} when adopting the forward--backward splitting \eqref{eq:splitting_PnP}, and by {GD} when employing the all-smooth splitting \eqref{eq:all_diff_splitting_PnP}.}
    \item {\emph{The steplength selection rule:} this choice is denoted by {BB} when $\alpha_k$ is computed adaptively as the geometric mean of the two well-known Barzilai–Borwein rules \cite{An-etal-2025}, i.e.,   }
    \begin{equation}\label{eq:alpha_BB_geom}
    \alpha_k = \max\left\{\alpha_{\min},\min\left\{\alpha_{\max},\frac{\|U_{i_k}^T(x_k - x_{k-N})\|_{D_k^{-1}}}{\|U_{i_k}^T(\nabla f(x_k) - \nabla f(x_{k-N}))\|_{D_k^{-1}}}\right\}\right\}, 
    \end{equation}
    {and by {C} when the steplength is kept constant throughout the iterations, namely $\alpha_k=1/\varrho$.}  
    \item {\emph{The inertial term:} this choice is denoted by the suffix {I} and the inertial parameters correspond to \cite{Chambolle-etal-2015}:}    \begin{equation}\label{eq:beta_FISTA}
        \beta_k = \frac{\mathrm{div}(k,N)-1}{\mathrm{div}(k,N)+2},
    \end{equation}
    {where $\mathrm{div}(\cdot,\cdot)$ denotes the integer division, together with $\beta_{\max} = 1$ and  $\gamma = 10^{-4}$.  The absence of the suffix means that the inertial term is discarded, i.e.  $\beta_{k} = 0$ and  $\gamma = 0$. Although rule \eqref{eq:beta_FISTA} is usually chosen for Nesterov-like accelerated methods, it has also been employed for Heavy-Ball-like inertial terms, see e.g. \cite{Pock-Sabach-2018}.} 
\end{itemize}
{The combination of these three binary choices yields the eight variants of Block-PHILA listed in Table \ref{tab:variants}. 
Comparing these eight settings allows us to isolate the contribution of each single ingredient of the proposed framework. {Finally, we remark that the scaling matrix is set to the identity matrix in most of the experiments. The only exception is in Section \ref{sec:variable_metric}, where the use of a variable metric is denoted by adding the suffix {VM} to the names reported in the first column of Table \ref{tab:variants}.} 
}

\begin{table}
\centering
\renewcommand{\arraystretch}{1.2}
\begin{tabular}{l l l l l}
\toprule
variant & splitting & steplength $\alpha_k$ & inertia $\beta_k$ & $N=1$ reduces to \\
\midrule
{FB-BB-I} & \eqref{eq:splitting_PnP} & \eqref{eq:alpha_BB_geom} & \eqref{eq:beta_FISTA} & PHILA \cite{Bonettini-Prato-Rebegoldi-2024}\\
{FB-BB}   & \eqref{eq:splitting_PnP} & \eqref{eq:alpha_BB_geom} & $0$ & VMILA \cite{Bonettini-Loris-Porta-Prato-2016}\\
{FB-C-I}  & \eqref{eq:splitting_PnP} & $1/\varrho$ & \eqref{eq:beta_FISTA} & PHILA \cite{Bonettini-Prato-Rebegoldi-2024}\\
{FB-C}    & \eqref{eq:splitting_PnP} & $1/\varrho$ & $0$ & VMILA \cite{Bonettini-Loris-Porta-Prato-2016}\\
\midrule
{GD-BB-I} & \eqref{eq:all_diff_splitting_PnP} & \eqref{eq:alpha_BB_geom} & \eqref{eq:beta_FISTA} & full-gradient PHILA\\
{GD-BB}   & \eqref{eq:all_diff_splitting_PnP} & \eqref{eq:alpha_BB_geom} & $0$ & full-gradient VMILA\\
{GD-C-I}  & \eqref{eq:all_diff_splitting_PnP} & $1/\varrho$ & \eqref{eq:beta_FISTA} & full-gradient PHILA\\
{GD-C}    & \eqref{eq:all_diff_splitting_PnP} & $1/\varrho$ & $0$ & full-gradient VMILA\\
\bottomrule
\end{tabular}
\caption{The variants of Block-PHILA compared in the numerical experiments. Each variant is identified by the three independent design choices.}
\label{tab:variants}
\end{table}

{As reported in the last column of Table \ref{tab:variants}, for $N=1$ the variants { FB-BB-I} and {FB-C-I} reduce to the PHILA approach proposed in \cite{Bonettini-Prato-Rebegoldi-2024} equipped with two particular hyperparameter settings. Similarly, {FB-BB} and {FB-C} are special instances of the VMILA method developed in \cite{Bonettini-Loris-Porta-Prato-2016}. The variants {GD-$\ast$} reduce, for $N=1$, to the corresponding full-gradient line-search schemes obtained by setting $\phi\equiv 0$.}

{The remaining parameters are fixed across all versions as follows: $\alpha_{min} = 10^{-2}$, $\alpha_{max} = 10^{3}$,  $\delta = 0.5$, $\vartheta = 10^{-4}$ and $\tau = 10^{6}$.}

{\paragraph{Competitors} Block-PHILA is compared with three PnP approaches from the literature.
\begin{itemize}
    \item \textbf{GS-PnP} a state-of-the-art Plug-and-Play approach that has recently gained popularity for solving image restoration problems of the form \eqref{eq:GS_PNP_problem}. More specifically, GS-PnP is based on the forward-backward iteration \eqref{eq:PnP_original} applied to the splitting \eqref{eq:splitting_PnP}, and therefore minimizes exactly the same objective function as Block-PHILA. We used the default hyperparameters setting of the official implementation\footnote{\url{https://github.com/samuro95/GSPnP}}.
    \item \textbf{DPIR} \cite{Zhang-et-al-2021}, a half-quadratic splitting scheme in which the denoising strength is progressively decreased along a prescribed schedule and a fixed, small number of iterations is performed. We recall that the neural network architecture employed in this work is the one introduced in \cite{Zhang-et-al-2021} for DPIR, and subsequently retrained in \cite{hurault2022gradient} within the Gradient Step framework; the comparison is therefore carried out at comparable network capacity. We considered the default schedule and number of iterations of the released implementation\footnote{\url{https://github.com/cszn/DPIR}}.
    \item \textbf{BCRED} \cite{Sun-etal-2020}, a block-coordinate Regularization-by-Denoising method, which is the approach closest in spirit to the one proposed here, as it also updates one block of coordinates at a time in the presence of a non-separable denoiser. In this setting, we used the DnCNN\cite{zhang2017beyond} as the denoiser and we considered the hyperparameters setting of the original implementation\footnote{\url{https://github.com/wustl-cig/bcred}}.
\end{itemize}
    It is important to remark that, among the compared approaches, only GS-PnP minimizes the objective function \eqref{eq:GS_PNP_problem}. DPIR varies the denoising strength along the iterations, so that no fixed functional is being decreased, whereas BCRED targets a fixed point of the RED operator, for which an explicit objective function is available only under suitable condition on the denoiser \cite{Reehorst-etal-2018,Romano-etal-2017}. 
    Consequently, neither the objective function values nor the stopping criterion adopted for Block-PHILA and GS-PnP are meaningful for DPIR and BCRED. 
    \begin{remark}
    Algorithmically, BCRED would correspond to a specific instance of the GD variant of Block-PHILA (without line-search) only if the DnCNN adopted in \cite{Sun-etal-2020} were replaced by the GS denoiser $\denoiser_\sigma$ defined in \eqref{eq:D_sigma}.
    Indeed, with such a choice, the BCRED iteration reads
\begin{equation}\label{eq:bcred_iteration}
    x_{k+1} = x_k-\gamma \Uk\Uk^T\left(\nabla \phi(x_k)+\varrho\left(x_k-\denoiser_{\sigma}(x_k)\right)\right),
\end{equation}
where $\gamma>0$ is a fixed steplength and the regularization parameter of the RED model has been identified with $\varrho$. Recalling from \eqref{eq:denoising_step} that $x-\denoiser_\sigma(x)=\nabla g_\sigma(x)$, the vector between brackets in \eqref{eq:bcred_iteration} is nothing but $\nabla \phi(x_k)+\varrho\nabla g_\sigma(x_k)=\nabla F(x_k)$, so that \eqref{eq:bcred_iteration} is a block-coordinate gradient step on the whole objective function \eqref{eq:GS_PNP_problem}
    \end{remark}
}

{\paragraph{Hardware and reproducibility}
All the experiments in Sections \ref{sec:image_deblurring}, \ref{sec:image_SR} and \ref{sec:largescale} have been performed on a laptop equipped with a 12th Intel i7-1280P, 16 GB of RAM and an NVIDIA GeForce RTX 3050 Laptop GPU with 4 GB of dedicated memory.
On the other hand, the memory profiling in Section \ref{subsec:memory_consumption} has been carried out on a computational cluster having a NVIDIA H200 GPU, in order to check the memory occupation of single-block configurations when dealing with large size images.
The implementation\footnote{\url{https://github.com/sedaboni/BlockPHILA_PnP}} of all the proposed algorithms (and the competitors) mainly relies on PyTorch and the DeepInverse library \cite{tachella2025deepinverse}.
}

\subsection{GPU memory consumption}\label{subsec:memory_consumption}
{In this section, we compare the memory required to evaluate the block GS denoiser and its single-block counterpart with respect to the image size. 
For each image size and each number of blocks, we record the peak amount of GPU memory allocated during the computation of single gradient step within the Block-PHILA framework, by means of \eqref{eq:block_gradient_step}.
Figure \ref{fig:mem_consumption} reports the peak allocated memory as a function of the image size, for $N\in\{1,2,4,8,16\}$. As expected, for a fixed number of blocks the memory grows proportionally to the number of pixels, since the computational graph that has to be stored to evaluate $J_{N_\sigma}$ is proportional to the size of the input. More importantly, it is evident that at every image size the memory is reduced monotonically as $N$ grows. The practical consequence is made explicit by the hatched band of Figure \ref{fig:mem_consumption}. On a device with a $4$ GB memory budget, it is not possible to employ the single-block algorithm on large size images (such as the $2048\times 2048$), whereas Block-PHILA with $N=16$ stays below the same memory budget for all the sizes considered here.
}

\begin{figure}
    \centering
    \includegraphics[width=0.62\linewidth]{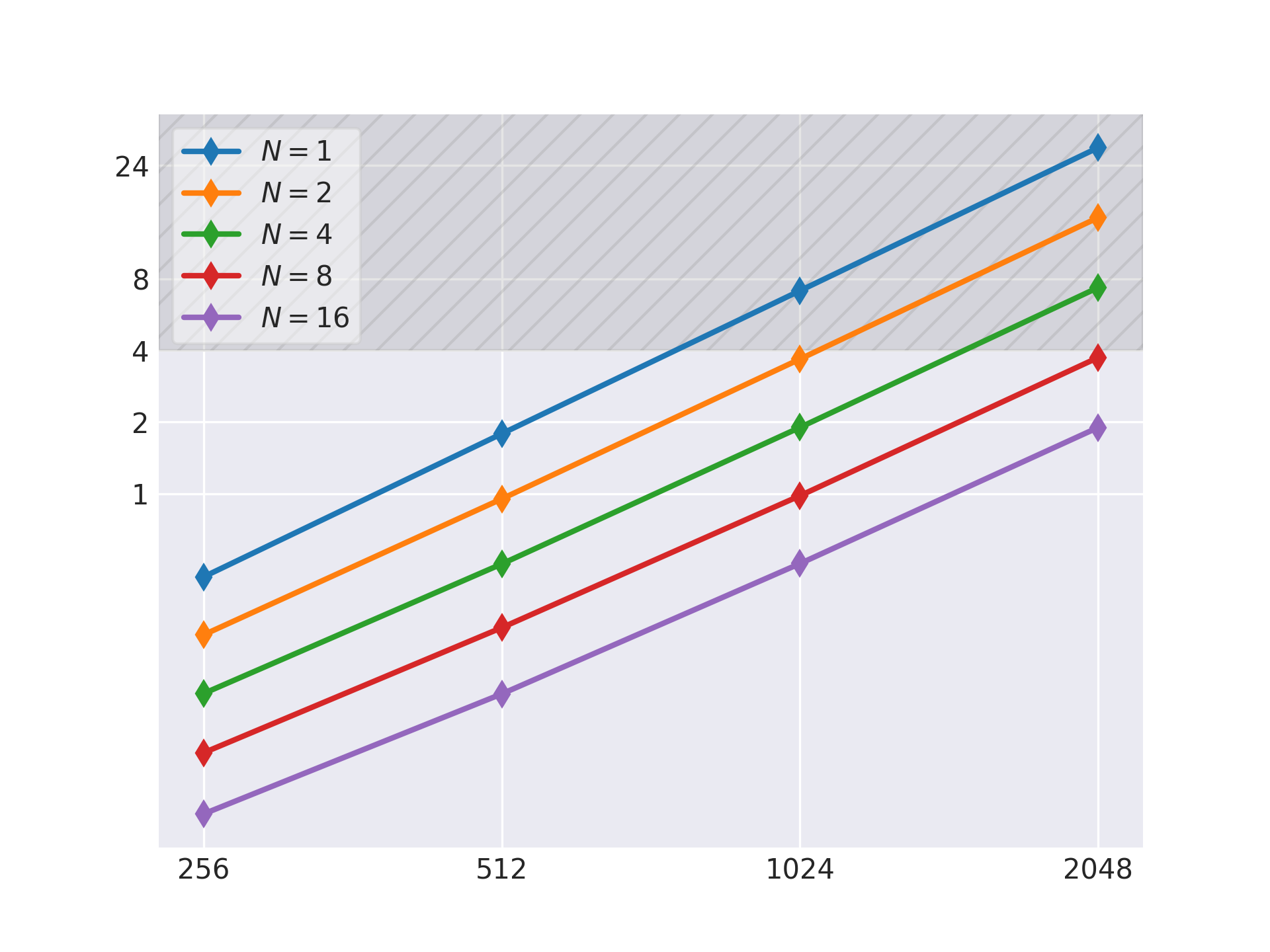}
    \caption{Peak GPU memory allocated during the computation of the gradient in one iteration of Block-PHILA, as a function of the image size and for an increasing number of blocks $N$. Both axes are in logarithmic scale. The hatched band marks the region above $4$ GB i.e. the amount of memory of the hardware used in all the other experiments.}
    \label{fig:mem_consumption}
\end{figure}

\subsection{Image deblurring}\label{sec:image_deblurring}
In this section, \textcolor{black}{we assume additive Gaussian noise with standard deviation $\nu\in\R^+$. As a consequence, the data fidelity term in \eqref{eq:GS_PNP_problem} has the form $\mathcal{D}(x) = \frac{1}{2}\|Ax-b\|^2$}, 
where $A = H$ is defined as a convolution operator with circular boundary conditions.  
In particular, the operator $H$ corresponds to a Gaussian blur kernel 
of size $25 \times 25$ with standard deviation $1.6$  \cite{hurault2022gradient,Romano-etal-2017}. As reference images, we consider those from the Set3C dataset 
(\textit{Butterfly}, \textit{Starfish}, and \textit{Leaves}) {having size $256\times 256$}.
  Furthermore, the observed data $b$ are assumed to be corrupted by Gaussian noise with noise level $\nu = 0.03$. By following \cite{hurault2022gradient}, where the authors consider the same test problems, we fixed $\sigma = 1.8\nu$ and $\textcolor{black}{\varrho} = 0.075$ in \eqref{eq:GS_PNP_problem}. All the compared algorithms were initialized using $x_0 = b$. The FB variants of Block-PHILA and GS-PnP require the computation of the proximal operator of the data fidelity term in the backward step. For $N=1$, a well-known closed-form solution can be applied, while for $N>1$, an inexact procedure is used in the implementation of \textsc{Step 3} of Block-PHILA. Further details are provided in Appendix \ref{app:prox_computation_exact} and Appendix \ref{app:prox_computation}, respectively.
  
  Table \ref{tab:deblur} presents the results achieved by Block-PHILA variants across different numbers of blocks, setting as
  stopping criterion
\begin{equation}\label{eq:stop_Crit}
\frac{|F(x_{k+1})-F(x_k)|}{|F(x_k)|} \leq \varepsilon,
\end{equation}
with $\varepsilon = 10^{-5}$. The same stopping criterion is used for GS-PnP. {Instead, both DPIR and BCRED are executed for a fixed number of iterations, namely 8 and 200, respectively}.
More in detail, Table \ref{tab:deblur} reports the values of the objective function (when available) and PSNR obtained by the different methods once \eqref{eq:stop_Crit} is satisfied, together with the corresponding number of iterations and computational time. For each combination of image and number of blocks, the best PSNR values are highlighted in bold, and the lowest computational times are underlined.
\begin{table}
\renewcommand{\arraystretch}{1.2}
\setlength{\tabcolsep}{5pt}
\adjustbox{max width=\textwidth}{%
\centering
\begin{tabular}{p{1cm} l *{3}{c c c c}}
\toprule
 &     & \multicolumn{4}{c}{\textit{Butterfly}} & \multicolumn{4}{c}{\textit{Leaves}} & \multicolumn{4}{c}{\textit{Starfish}} \\
 \cmidrule(lr){3-6}\cmidrule(lr){7-10}\cmidrule(lr){11-14}
  &  & $F$ & PSNR & itr & time & $F$ & PSNR & itr & time & $F$ & PSNR & itr & time \\
\midrule

\multirow{11}{*}{\parbox{1cm}{\centering \rotatebox{90}{$N=1$}}} & GS-PnP & 97.50 & 28.17 & 30 & 4.79 & 97.72 & 28.41 & 42 & 6.46 & 97.73 & 29.00 & 24 & 3.71 \\
 & DPIR & - & 27.23 & 8 & \underline{1.22} & - & 27.38 & 8 & \underline{0.89} & - & 28.67 & 8 & \underline{1.03} \\
 \cmidrule(l){2-14}  & BCRED & - & 26.84 & 200 & 21.51 & - & 25.86 & 200 & 21.38 & - & 27.79 & 200 & 20.01 \\
 & {FB-BB-I} & 97.58 & 28.04 & 8 & 1.86 & 97.93 & 27.91 & 8 & 1.49 & 97.76 & 29.12 & 11 & 8.29 \\
 & {FB-BB} & 97.50 & 28.22 & 16 & 3.64 & 97.72 & 28.46 & 23 & 14.27 & 97.73 & 29.03 & 15 & 9.35 \\
 & {FB-C-I} & 97.50 & \textbf{28.42} & 13 & 4.38 & 97.73 & \textbf{28.49} & 18 & 11.14 & 97.73 & \textbf{29.13} & 15 & 9.63 \\
 & {FB-C} & 97.50 & 28.17 & 30 & 15.08 & 97.72 & 28.41 & 42 & 26.14 & 97.73 & 29.00 & 24 & 15.58 \\
\cmidrule(l){2-14}  &{GD-BB-I} & 97.53 & 27.78 & 89 & 92.41 & 97.78 & 28.08 & 125 & 139.75 & 97.75 & 28.87 & 78 & 87.81 \\
 & {GD-BB} & 97.54 & 27.73 & 105 & 27.92 & 97.78 & 28.07 & 156 & 78.53 & 97.76 & 28.81 & 85 & 88.70 \\
 & {GD-C-I} & 97.71 & 26.89 & 28 & 7.11 & 97.86 & 27.66 & 60 & 34.69 & 97.78 & 28.76 & 40 & 38.01 \\
 & {GD-C} & 97.69 & 26.93 & 43 & 11.14 & 98.27 & 26.37 & 37 & 9.32 & 98.40 & 27.58 & 8 & 1.99 \\
\midrule
\multirow{9}{*}{\parbox{1cm}{\centering \rotatebox{90}{$N=2$}}} & BCRED & - & 26.44 & 200 & 18.94 & - & 25.47 & 200 & 27.66 & - & 27.65 & 200 & 50.39 \\
 & {FB-BB-I} & 97.59 & 28.02 & 15 & \underline{9.07} & 97.87 & 28.17 & 19 & \underline{11.71} & 97.75 & \textbf{29.10} & 22 & 12.40 \\
 & {FB-BB} & 97.51 & 28.18 & 27 & 15.12 & 97.73 & 28.41 & 41 & 22.29 & 97.75 & 28.93 & 20 & 10.79 \\
 & {FB-C-I} & 97.51 & \textbf{28.36} & 25 & 13.78 & 97.74 & \textbf{28.49} & 37 & 19.81 & 97.80 & 28.84 & 13 & \underline{7.17} \\
 & {FB-C} & 97.52 & 28.05 & 48 & 25.77 & 97.74 & 28.35 & 70 & 37.24 & 97.74 & 28.91 & 34 & 18.46 \\
\cmidrule(l){2-14}  &{GD-BB-I} & 97.56 & 27.57 & 140 & 133.94 & 97.84 & 27.72 & 170 & 166.24 & 97.77 & 28.68 & 96 & 93.07 \\
 & {GD-BB} & 97.57 & 27.46 & 156 & 97.38 & 97.86 & 27.64 & 200 & 190.56 & 97.78 & 28.63 & 110 & 106.03 \\
 & {GD-C-I} & 97.84 & 26.48 & 41 & 35.22 & 98.16 & 26.67 & 61 & 53.05 & 97.78 & 28.62 & 68 & 62.39 \\
 & {GD-C} & 98.71 & 25.42 & 15 & \underline{3.48} & 98.67 & 25.63 & 42 & \underline{9.42} & 97.90 & 28.19 & 46 & 31.56 \\
\midrule
\multirow{9}{*}{\parbox{1cm}{\centering \rotatebox{90}{$N=4$}}} & BCRED & - & 25.69 & 200 & 43.07 & - & 24.72 & 200 & 43.16 & - & 27.36 & 200 & 43.70 \\
 & {FB-BB-I} & 97.62 & 27.94 & 30 & \underline{14.98} & 97.86 & 28.11 & 37 & \underline{19.48} & 97.83 & 28.85 & 24 & \underline{11.73} \\
 & {FB-BB} & 97.54 & 27.96 & 37 & 17.11 & 97.76 & 28.26 & 56 & 26.16 & 97.75 & \textbf{28.93} & 39 & 17.97 \\
 & {FB-C-I} & 97.54 & \textbf{28.13} & 37 & 16.99 & 97.78 & \textbf{28.39} & 52 & 23.46 & 97.81 & 28.80 & 25 & 11.86 \\
 & {FB-C} & 97.53 & 27.82 & 71 & 34.98 & 97.78 & 28.10 & 96 & 44.27 & 97.75 & 28.86 & 59 & 27.10 \\
\cmidrule(l){2-14}  &{GD-BB-I} & 97.66 & 27.00 & 163 & 143.64 & 98.03 & 27.00 & 200 & 179.61 & 97.84 & 28.36 & 108 & 97.26 \\
 & {GD-BB} & 97.69 & 26.89 & 183 & 162.96 & 98.14 & 26.68 & 200 & 185.61 & 97.86 & 28.28 & 116 & 107.27 \\
 & {GD-C-I} & 98.56 & 25.60 & 29 & 22.10 & 98.28 & 26.33 & 98 & 76.85 & 97.96 & 28.02 & 52 & 41.36 \\
 & {GD-C} & 98.78 & 25.40 & 31 & 24.46 & 98.57 & 25.77 & 94 & 76.55 & 98.22 & 27.72 & 36 & 29.36 \\
\bottomrule
\end{tabular}}
\caption{Results achieved by the Block-PHILA variants and BCRED in solving the image deblurring problems by varying the number $N$ of blocks. For $N=1$ the results obtained by GS-PnP and DPIR are also included. For each image and value of $N$, the best PSNR is shown in bold and the lowest time is  underlined.}\label{tab:deblur}
\end{table}

The following considerations can be deduced from the analysis of Table \ref{tab:deblur}.
\begin{itemize}
    \item[(i)] For $N=1$, it is evident that DPIR requires the least computational time across all tested images. Nevertheless, the final PSNR it achieves is generally lower than that provided by the FB versions of Block-PHILA, regardless of the chosen number of blocks. As concerns 
    Block-PHILA, FB-C-I, FB-BB,  FB-BB-I  meets the stopping criterion earlier than the competitors, while still providing comparable or better values for both the objective function and PSNR. These results highlight the advantages, in terms of convergence speed, of adopting an adaptive steplength $\alpha_k$ and/or incorporating an inertial term, features absent in Block-PHILA-FB-C and GS-PnP. The acceleration of standard forward-backward algorithms through adaptive steplengths and inertial techniques is well established in the literature, with their effectiveness having been repeatedly observed in imaging tasks (see, for example, \cite{Bonettini-Loris-Porta-Prato-2016,Bonettini-Prato-Rebegoldi-2024,Bonettini-Rebegoldi-Ruggiero-2018,Chouzenoux-Pesquet-Repetti-2016} and references therein), and our findings provide further confirmation of these benefits. Moreover, for $N=1$, the FB versions of Block-PHILA, based on the splitting \eqref{eq:splitting_PnP}, typically outperform the GD variants, which instead exploit the differentiability of both the fidelity and regularization terms via the splitting \eqref{eq:all_diff_splitting_PnP}. This behaviour has already been observed in \cite{Combettes-etal-2019}.  
    \item[(ii)] For $N=2$ and $N=4$, Block-PHILA FB consistently outperforms Block-PHILA GD in terms of PSNR achieved when the stopping criterion is satisfied. Although Block-PHILA-GD-C sometimes satisfies the stopping criterion earlier than its forward-backward counterpart Block-PHILA-FB-C, this is not due to faster convergence; rather, it results from stalled progress in the objective function values between successive iterations. Indeed, the PSNR values provided by Block-PHILA GD-C are consistently the lowest.
    \item[(iii)] The performance of Block-PHILA for 
$N>1$ is comparable to that obtained for $N=1$ in terms of objective function reduction and  PSNR values. Notably, the block-wise structure facilitates the processing of large-scale images more efficiently, enabling scalable optimization without compromising performance. This design also significantly reduces GPU memory usage, making the method well-suited for high-resolution data and resource-constrained environments.
\end{itemize}
In all the block configurations, it is evident that the behavior of BCRED in terms of PSNR is comparable to that of the Block-PHILA GD variants, given the similarity of their underlying iterations. 
Similar conclusions can be drawn from Figure \ref{fig:deblur_graph}, which reports the PSNR  and objective function values achieved by some of the compared methods on the \textit{Leaves} image with respect to the computational time. In particular, panels (a)–(c) and (d)–(f) report the PSNR and objective function values obtained by Block-PHILA-FB-BB-I and Block-PHILA-GD-C-I, respectively, for different numbers of blocks, in a fixed time budget of 25 seconds. For $N=1$, the results obtained by GS-PnP are also included.  
\begin{figure}
    \centering
    \begin{subfigure}[b]{0.3\textwidth}
        \centering
        \includegraphics[width=\textwidth]{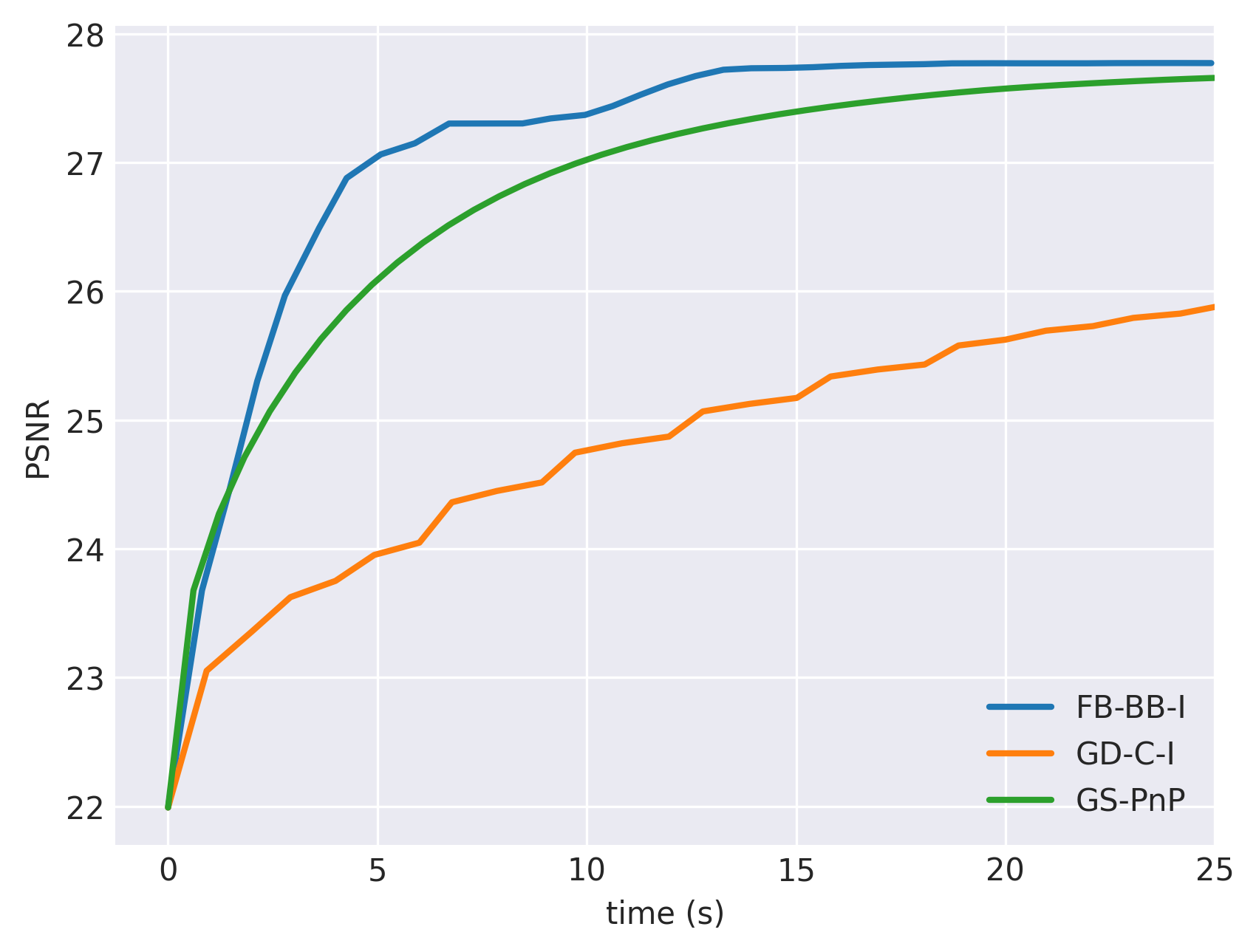}
        \caption{$N=1$}
    \end{subfigure}\hfill
    \begin{subfigure}[b]{0.3\textwidth}
        \centering
        \includegraphics[width=\textwidth]{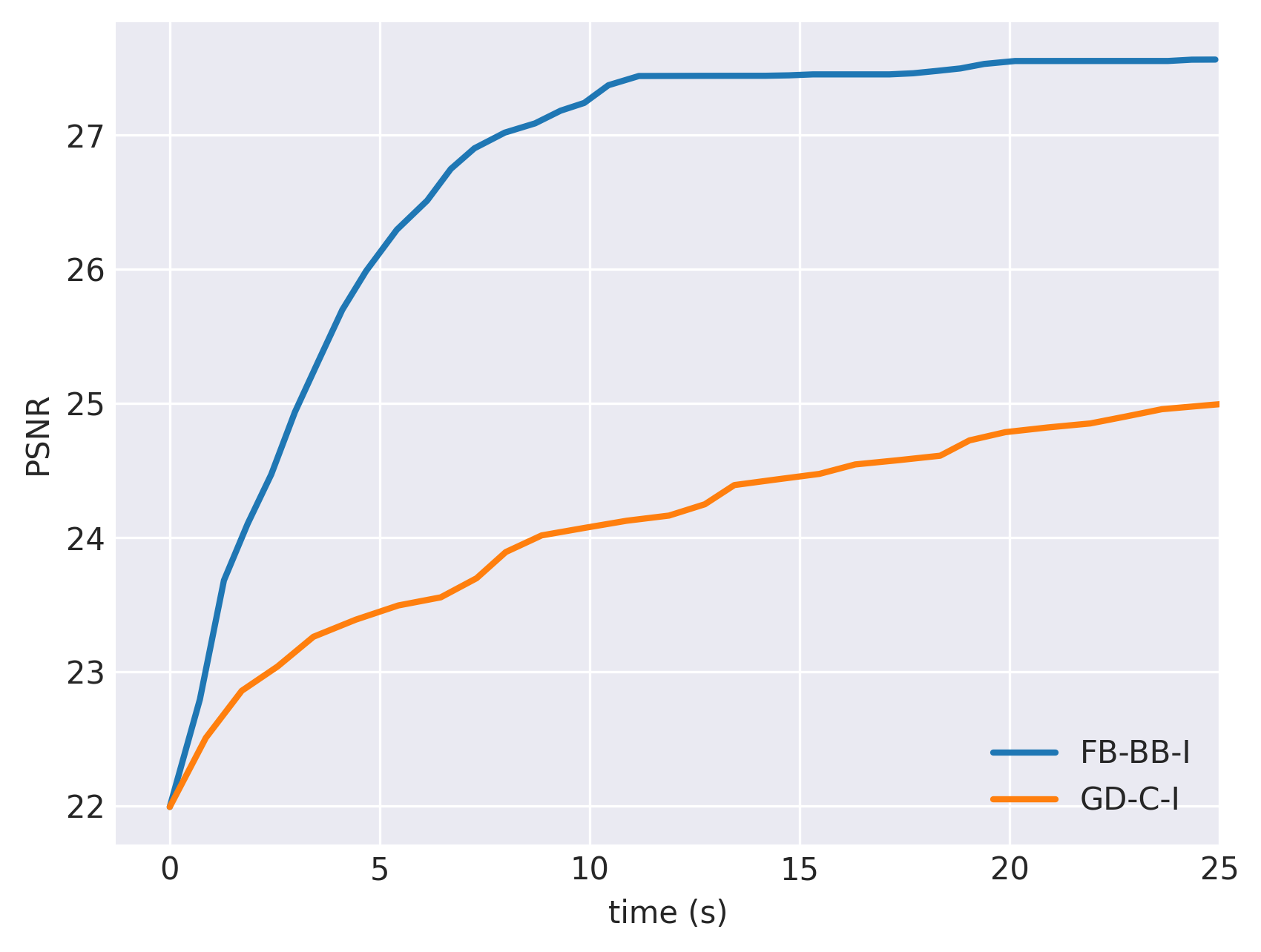}
        \caption{$N=2$}
    \end{subfigure}\hfill
    \begin{subfigure}[b]{0.3\textwidth}
        \centering
        \includegraphics[width=\textwidth]{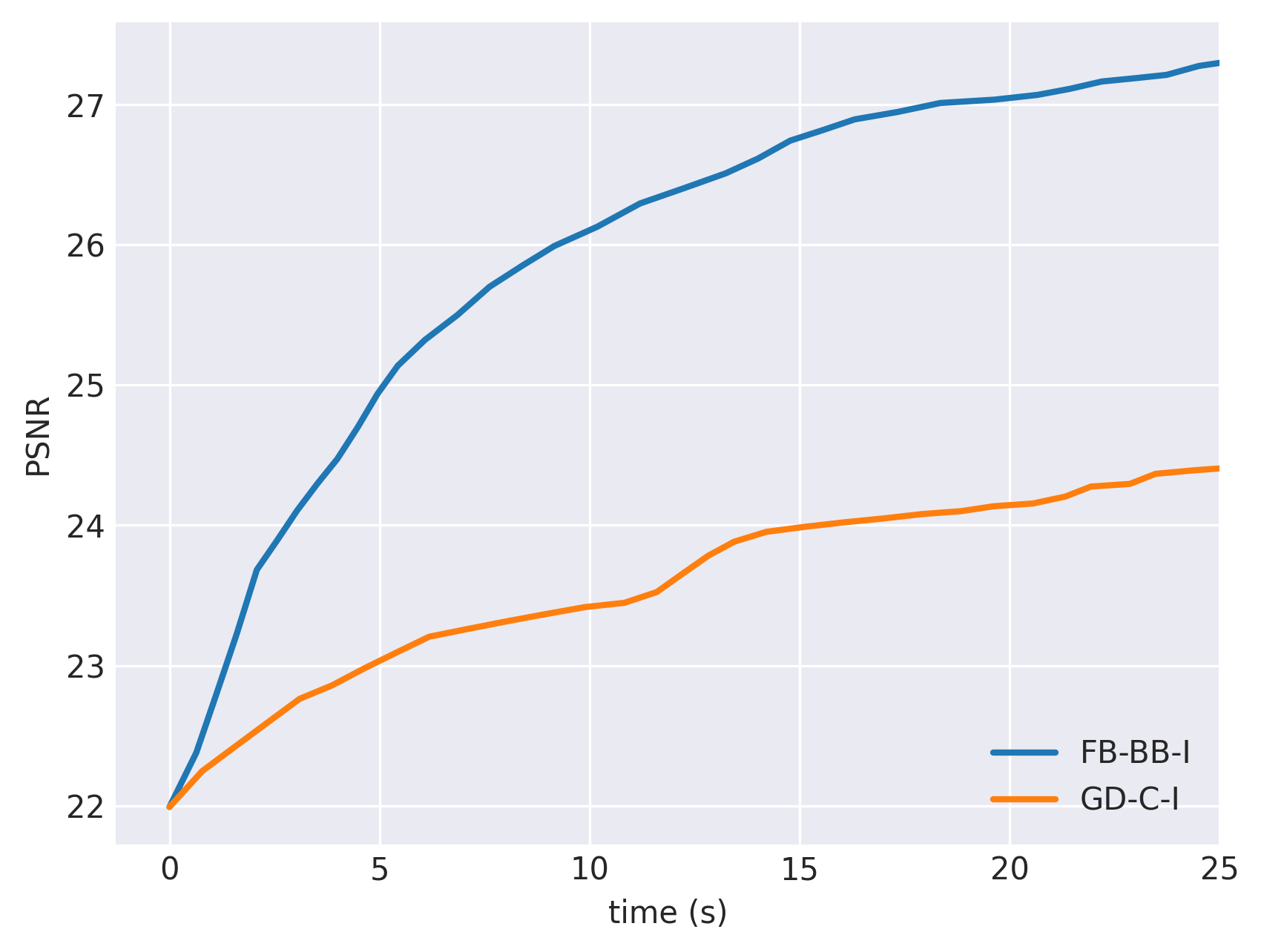}
        \caption{$N=4$}
    \end{subfigure}\\
    \begin{subfigure}[b]{0.3\textwidth}
        \centering
        \includegraphics[width=\textwidth]{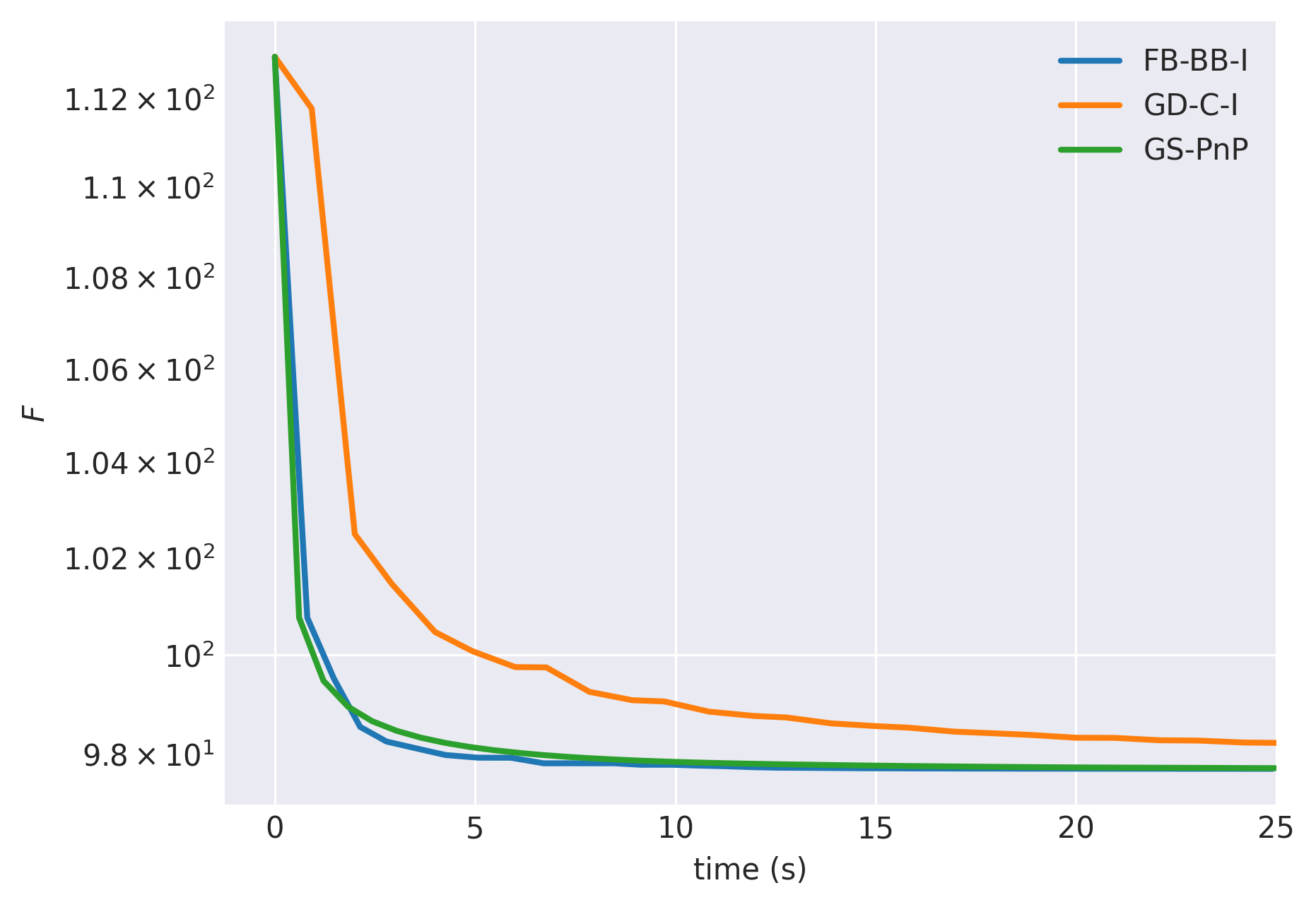}
        \caption{$N=1$}
    \end{subfigure}\hfill
    \begin{subfigure}[b]{0.3\textwidth}
        \centering
        \includegraphics[width=\textwidth]{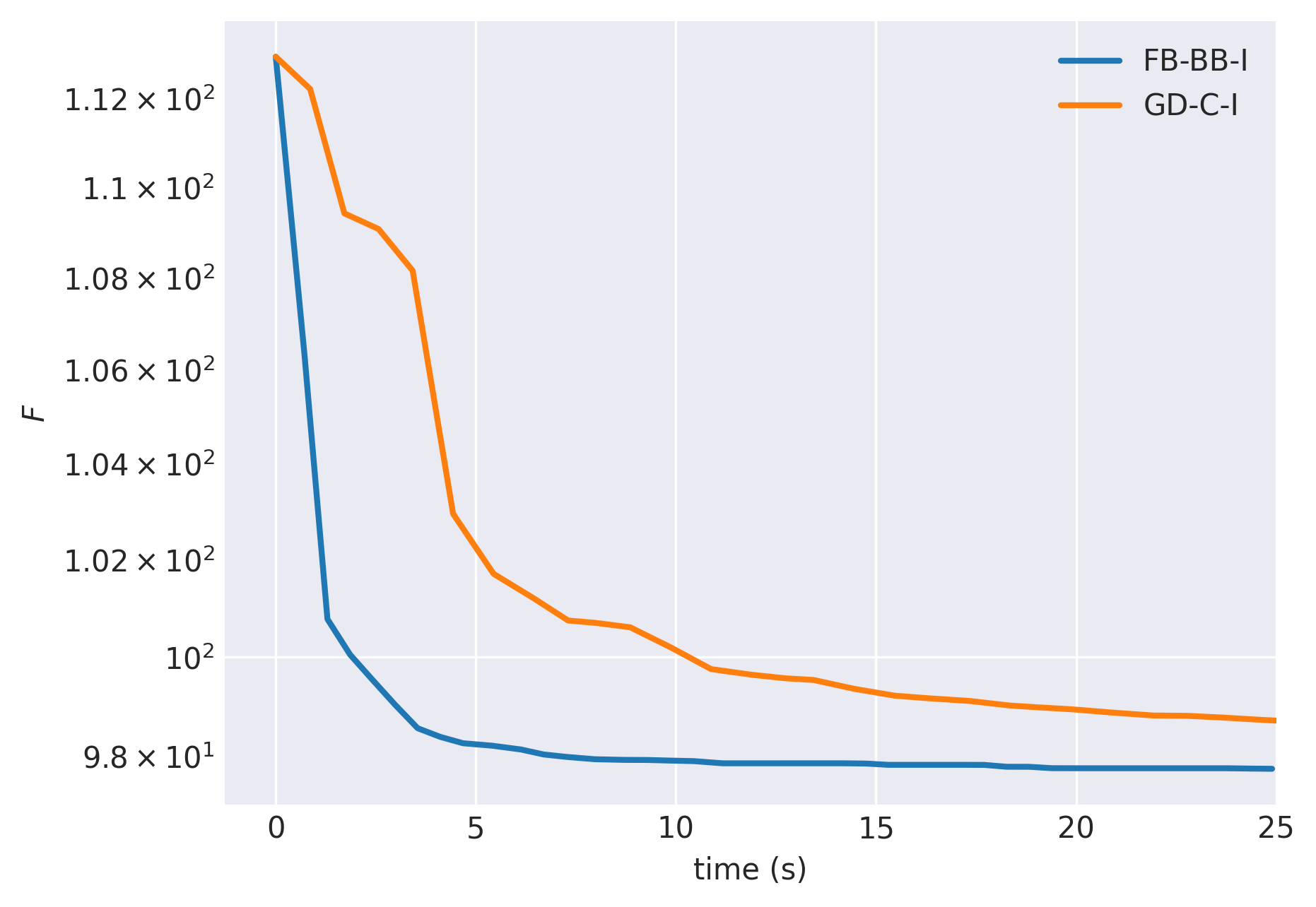}
        \caption{$N=2$}
    \end{subfigure}\hfill
    \begin{subfigure}[b]{0.3\textwidth}
        \centering
        \includegraphics[width=\textwidth]{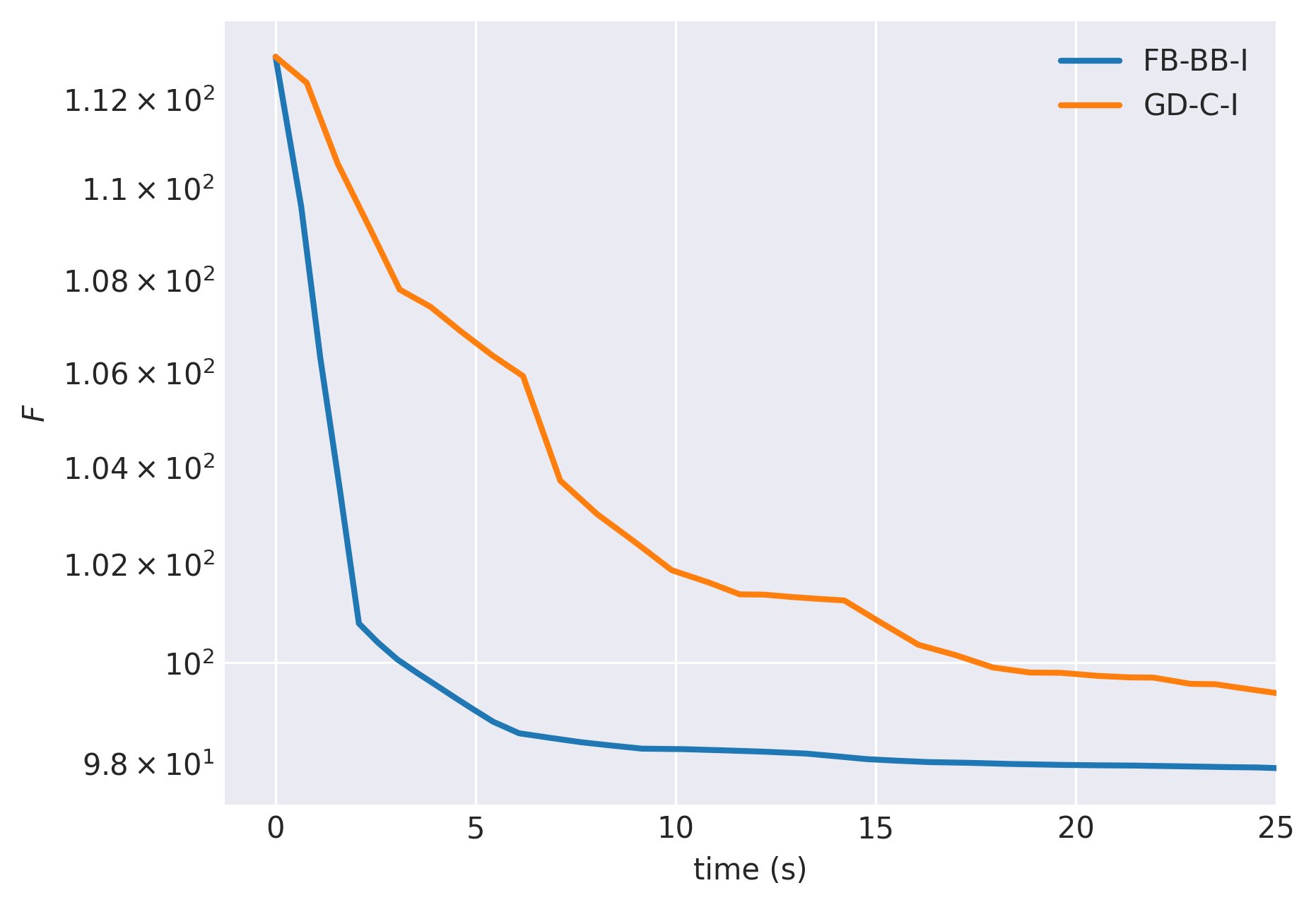}
        \caption{$N=4$}
    \end{subfigure}
    \caption{Results achieved solving the image deblurring problem on \textit{leaves} by varying the number $N$ of blocks. For $N=1$ the results obtained by GS-PnP are also included.}
    \label{fig:deblur_graph}
\end{figure}
It is evident that Block-PHILA-FB-BB-I consistently outperforms both Block-PHILA-GD-C-I and GS-PnP for $N=1$. Moreover, its performance does not degrade as the number of blocks increases. 

In Figure \ref{fig:blur_results} we compare the reconstructions obtained by GS-PnP, Block-PHILA-FB-BB-I and Block-PHILA-GD-C-I (for different numbers of blocks) after 2 seconds of computation. Following \cite{hurault2022gradient}, a further application of the denoiser 
$\denoiser_\sigma$, defined in \eqref{eq:D_sigma}, is also performed on the last iterate. For $N=1$ Block-PHILA-FB-BB-I yields the best approximation of the solution, achieving high overall sharpness and well-preserved edges. For $N=2$ and $N=4$, Block-PHILA-FB-BB-I also provides satisfactory reconstructions, effectively removing noise and blur without introducing block artifacts. In contrast, Block-PHILA-GD-C-I exhibits the worst reconstruction behavior. This is particularly evident for 
$N=1$, where the solution obtained with Block-PHILA FB-BB-I is of significantly higher quality.

\begin{figure}[]
	\centering
	\subfloat[Ground truth]{
	   \scalebox{1}{
    	\begin{tikzpicture}
    	\begin{scope}[spy using outlines={rectangle,blue,magnification=2,size=1.5cm}]
    	\node [name=c]{	\includegraphics[height=3cm]{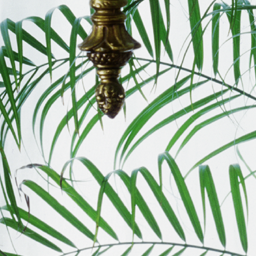}};
    	\spy on (1.,0.8) in node [name=c1]  at (0.75,-2.25);
    	\spy on (-0.75,0.75) in node [name=c1]  at (-0.75,-2.25);
    	\end{scope}
    	\end{tikzpicture}}
        }
	\subfloat[Corrupted data]{
	\scalebox{1}{
	\begin{tikzpicture}
	\begin{scope}[spy using outlines={rectangle,blue,magnification=2,size=1.5cm}]
	\node [name=c]{	\includegraphics[height=3cm]{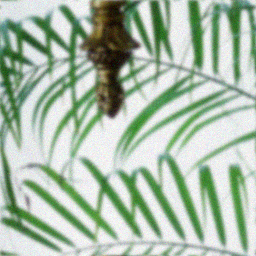}};
	\spy on (1.,0.8) in node [name=c1]  at (0.75,-2.25);
	\spy on (-0.75,0.75) in node [name=c1]  at (-0.75,-2.25);
	\end{scope}
	\end{tikzpicture}}
    }
	\subfloat[GS-PnP]{
	\scalebox{1}{
	\begin{tikzpicture}
	\begin{scope}[spy using outlines={rectangle,blue,magnification=2,size=1.5cm}]
	\node [name=c]{	\includegraphics[height=3cm]{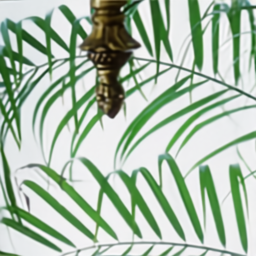}};
	\spy on (1.,0.8) in node [name=c1]  at (0.75,-2.25);
	\spy on (-0.75,0.75) in node [name=c1]  at (-0.75,-2.25);
	\end{scope}
	\end{tikzpicture}}
    }\\

	\subfloat[$N=1$]{
	\scalebox{1}{
	\begin{tikzpicture}
	\begin{scope}[spy using outlines={rectangle,blue,magnification=2,size=1.5cm}]
	\node [name=c]{	\includegraphics[height=3cm]{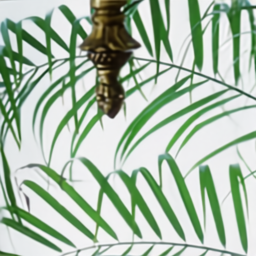}};
	\spy on (1.,0.8) in node [name=c1]  at (0.75,-2.25);
	\spy on (-0.75,0.75) in node [name=c1]  at (-0.75,-2.25);
	\end{scope}
	\end{tikzpicture}}
    }
	\subfloat[$N=2$]{
	\scalebox{1}{
	\begin{tikzpicture}
	\begin{scope}[spy using outlines={rectangle,blue,magnification=2,size=1.5cm}]
	\node [name=c]{	\includegraphics[height=3cm]{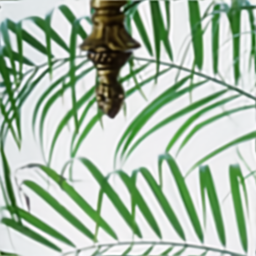}};
	\spy on (1.,0.8) in node [name=c1]  at (0.75,-2.25);
	\spy on (-0.75,0.75) in node [name=c1]  at (-0.75,-2.25);
	\end{scope}
	\end{tikzpicture}}
    }
	\subfloat[$N=4$]{
	\scalebox{1}{
	\begin{tikzpicture}
	\begin{scope}[spy using outlines={rectangle,blue,magnification=2,size=1.5cm}]
	\node [name=c]{	\includegraphics[height=3cm]{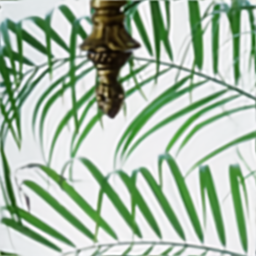}};
	\spy on (1.,0.8) in node [name=c1]  at (0.75,-2.25);
	\spy on (-0.75,0.75) in node [name=c1]  at (-0.75,-2.25);
	\end{scope}
	\end{tikzpicture}}
    }\\

	\subfloat[$N=1$]{
	\scalebox{1}{
	\begin{tikzpicture}
	\begin{scope}[spy using outlines={rectangle,blue,magnification=2,size=1.5cm}]
	\node [name=c]{	\includegraphics[height=3cm]{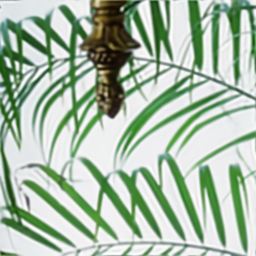}};
	\spy on (1.,0.8) in node [name=c1]  at (0.75,-2.25);
	\spy on (-0.75,0.75) in node [name=c1]  at (-0.75,-2.25);
	\end{scope}
	\end{tikzpicture}}
    }
	\subfloat[$N=2$]{
	\scalebox{1}{
	\begin{tikzpicture}
	\begin{scope}[spy using outlines={rectangle,blue,magnification=2,size=1.5cm}]
	\node [name=c]{	\includegraphics[height=3cm]{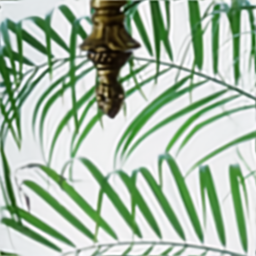}};
	\spy on (1.,0.8) in node [name=c1]  at (0.75,-2.25);
	\spy on (-0.75,0.75) in node [name=c1]  at (-0.75,-2.25);
	\end{scope}
	\end{tikzpicture}}
    }
	\subfloat[$N=4$]{
	\scalebox{1}{
	\begin{tikzpicture}
	\begin{scope}[spy using outlines={rectangle,blue,magnification=2,size=1.5cm}]
	\node [name=c]{	\includegraphics[height=3cm]{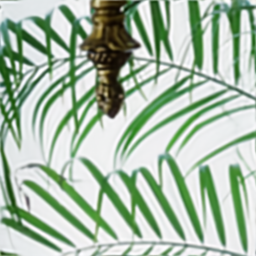}};
	\spy on (1.,0.8) in node [name=c1]  at (0.75,-2.25);
	\spy on (-0.75,0.75) in node [name=c1]  at (-0.75,-2.25);
	\end{scope}
	\end{tikzpicture}}
    }
    \caption{Reconstructions provided by GS-PnP, Block-PHILA-FB-BB-I (second row) and Block-PHILA-GD-C-I (third row) for the image deblurring problem.}
    \label{fig:blur_results}
\end{figure}

\textcolor{black}{Finally, in Table \ref{tab:placeholder_2}, we report the average number of backtracking steps per iteration performed by the different versions of Block-PHILA. The results clearly show that a very small number of backtracking steps is required overall, especially in the case of the FB implementations. This is likely due to the fact that the Lipschitz constant of $\nabla f$ is smaller than that of $\nabla f+\nabla \phi$. Consequently, the initial stepsize is more frequently accepted, reducing the computational burden of the Armijo line search.
}

\begin{table}[]
    \centering
    \resizebox{\textwidth}{!}{%
   \textcolor{black}{ \begin{tabular}{l rrr rrr rrr}
    \toprule
    \cmidrule(lr){2-10}
    & \multicolumn{3}{c}{$N=1$} & \multicolumn{3}{c}{$N=2$} & \multicolumn{3}{c}{$N=4$} \\
    \cmidrule(lr){2-4} \cmidrule(lr){5-7} \cmidrule(lr){8-10}
     & \textit{Butterfly} & \textit{Leaves} & \textit{Starfish} & \textit{Butterfly} & \textit{Leaves} & \textit{Starfish} & \textit{Butterfly} & \textit{Leaves} & \textit{Starfish} \\
    \midrule
    FB-BB-I & 0.88 & 0.38 & 0.82 & 0.40 & 0.58 & 0.18 & 0.27 & 0.49 & 0.21 \\
    FB-BB   & 0.12 & 0.00 & 0.00 & 0.07 & 0.05 & 0.00 & 0.00 & 0.02 & 0.00 \\
    FB-C-I  & 0.08 & 0.00 & 0.13 & 0.08 & 0.03 & 0.08 & 0.00 & 0.00 & 0.08 \\
    FB-C    & 0.00 & 0.00 & 0.00 & 0.00 & 0.00 & 0.00 & 0.00 & 0.00 & 0.00 \\
    GD-BB-I & 2.99 & 2.99 & 2.99 & 2.99 & 2.99 & 2.98 & 2.99 & 3.00 & 2.96 \\
    GD-BB   & 2.99 & 2.99 & 2.99 & 2.99 & 3.00 & 2.98 & 2.99 & 3.00 & 2.97 \\
    GD-C-I  & 2.36 & 2.38 & 2.33 & 2.37 & 2.38 & 2.43 & 2.21 & 2.38 & 2.38 \\
    GD-C    & 2.63 & 2.62 & 2.38 & 2.40 & 2.57 & 2.61 & 2.42 & 2.60 & 2.44 \\
    \bottomrule
    \end{tabular}}
    }
    \caption{Average number of backtracking steps per iteration performed by the different versions of Block-PHILA on the deblurring problems.}
    \label{tab:placeholder_2}
\end{table}
\subsubsection{\textcolor{black}{The effect of the variable metric}}\label{sec:variable_metric}
\textcolor{black}{In all the experiments reported so far, the scaling matrices have been set to $D_k=I_{n_{i_k}}$, meaning that the variable metric allowed by Block-PHILA is not exploited in the results shown in Table \ref{tab:deblur}.
 In this section, we investigate whether a non-trivial choice of $D_k$ can be beneficial in the case of the image deblurring problem considered before. Specifically, we consider a scaling matrix inspired by Adagrad \cite{Duchi-etal-2009} whose entries accumulate the squared components of the gradient observed on the selected block, namely
\begin{equation}\label{eq:adagrad_metric}
g_k^{i_k} = g_k^{i_k} + (\Uk^T\nabla \ff(\xk))^2
\qquad
S_k = \mathrm{diag}\left(\sqrt{g_k^{i_k}+\varepsilon}\right)
\qquad
D_k = \min\left\{\mu_k, \frac{1}{\mu_k\max{(S_k)}}S_k\right\},
\end{equation}
where the square operation is applied element-wise, $\varepsilon>0$ is a small constant to ensure stability and the vectors $g_0^{i}$ for $i=1,\ldots,N$ are defined as the null vectors. The sequence $\mu_k$ is defined as $\mu_k = 1+\frac{t_1}{{(\mathrm{div}(k,N)+1)}^{1+t_2}}$, where $t_1=10$ and $t_2=1.1$ \cite{Bonettini-Loris-Porta-Prato-2016}. The variable metric \eqref{eq:adagrad_metric} has been implemented and tested specifically for the FB configurations of Block-PHILA. As defined in Table \ref{tab:variants}, this yields four additional variants denoted by the suffix {VM}, namely Block-PHILA-{FB-C-VM}, {FB-BB-VM}, {FB-C-I-VM}, and {FB-BB-I-VM}.
We point out that, even in the presence of a scaling matrix, the inexact computation of the proximal operator is performed by means of an inexact dual procedure detailed in Appendix \ref{app:prox_computation}.\\
In Figure \ref{fig:vm_graph}, we compare the PSNR results obtained by solving the deblurring problem on the \textit{leaves} image, contrasting the use of the identity matrix with the variable metric \eqref{eq:adagrad_metric} across different values of $N$. The top row displays the {FB} variants without inertia (with and without the scaling matrix), while the bottom row reports the {FB-I} versions (with and without the scaling matrix). In both cases, for $N=1$, employing the variable metric improves the performance when a constant steplength is used. However, the adoption of the BB stepsize generally outperforms the scaled versions, effectively closing the performance gap between the standard and VM approaches. Finally, concerning the block-coordinate configurations ($N=2$ and $N=4$), the advantage of using non-constant scaling matrices is no longer evident, which might be attributed to the iterative procedure required to compute the inexact proximal operator in the presence of a non-trivial scaling matrix.}
\begin{figure}
    \centering
    \begin{subfigure}[b]{0.3\textwidth}
        \centering
        \includegraphics[width=\textwidth]{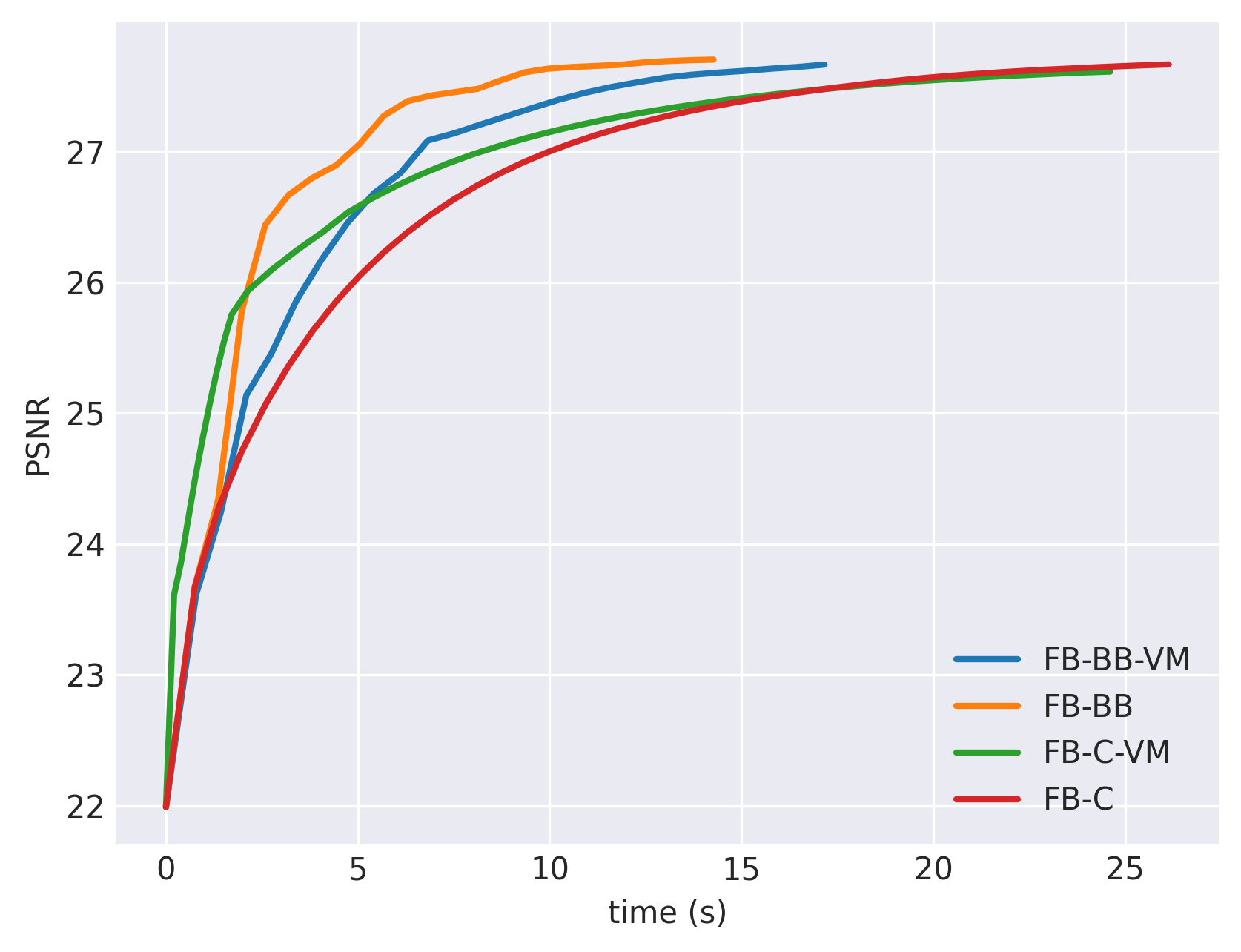}
        \caption{$N=1$}
    \end{subfigure}\hfill
    \begin{subfigure}[b]{0.3\textwidth}
        \centering
        \includegraphics[width=\textwidth]{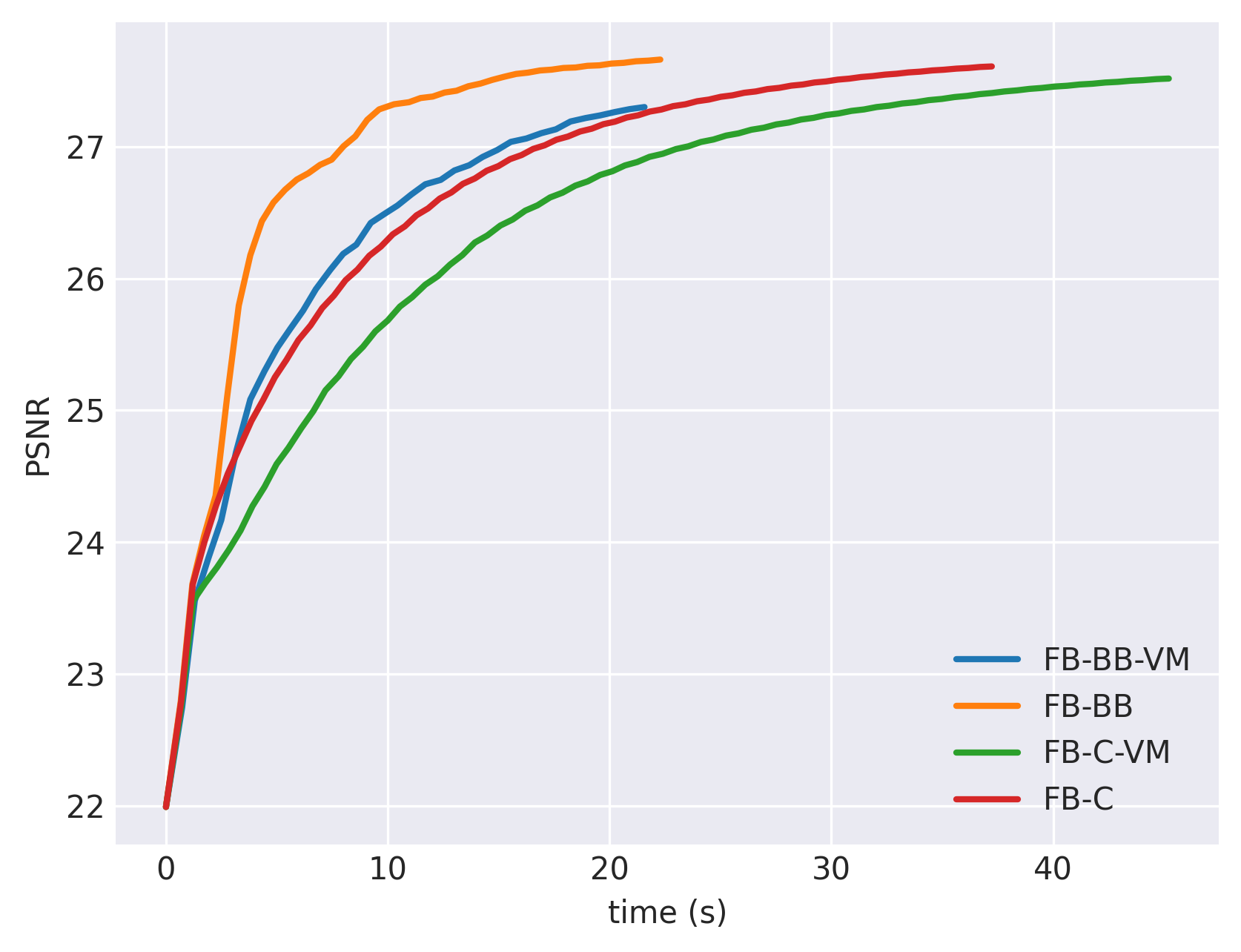}
        \caption{$N=2$}
    \end{subfigure}\hfill
    \begin{subfigure}[b]{0.3\textwidth}
        \centering
        \includegraphics[width=\textwidth]{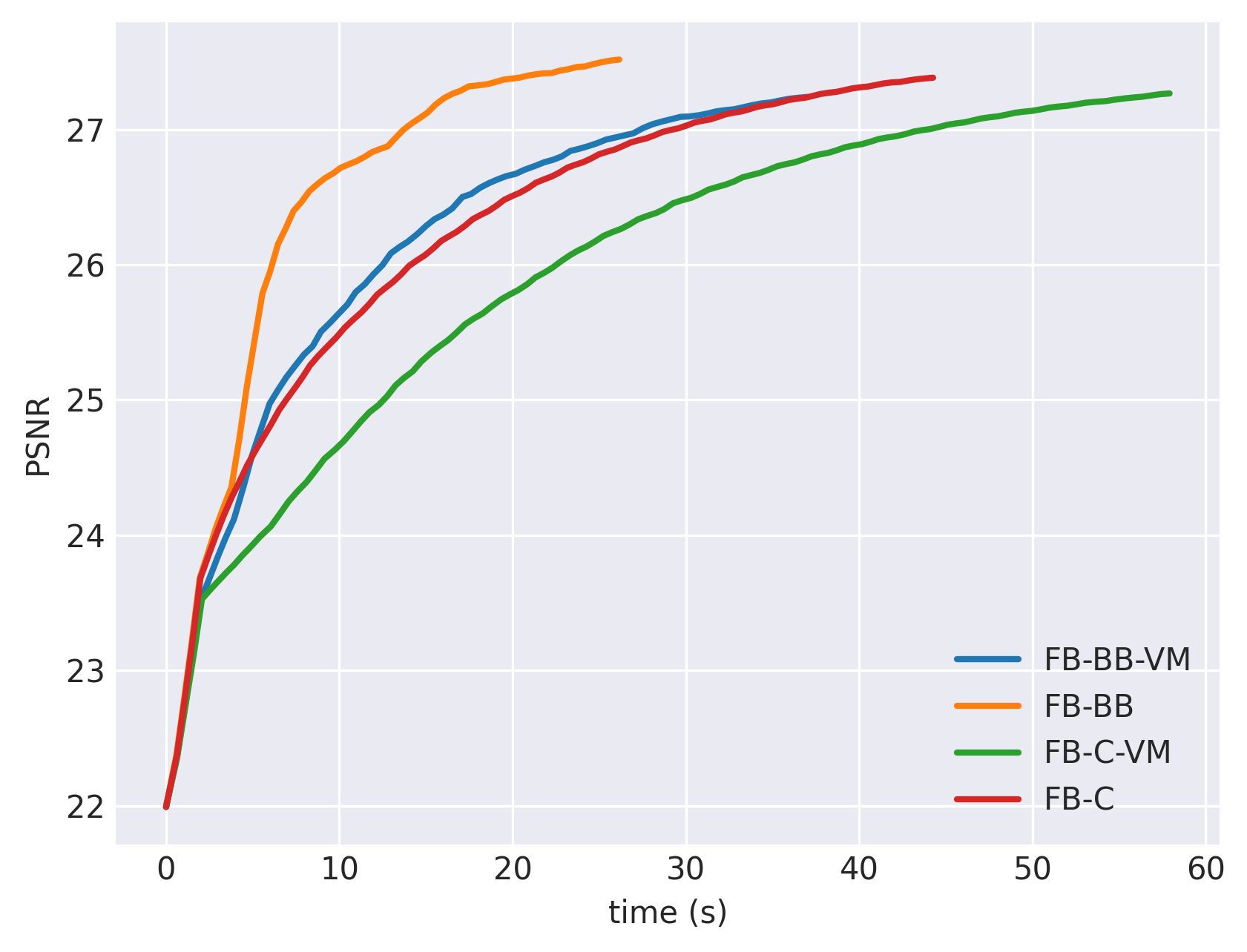}
        \caption{$N=4$}
    \end{subfigure}\\
    \begin{subfigure}[b]{0.3\textwidth}
        \centering
        \includegraphics[width=\textwidth]{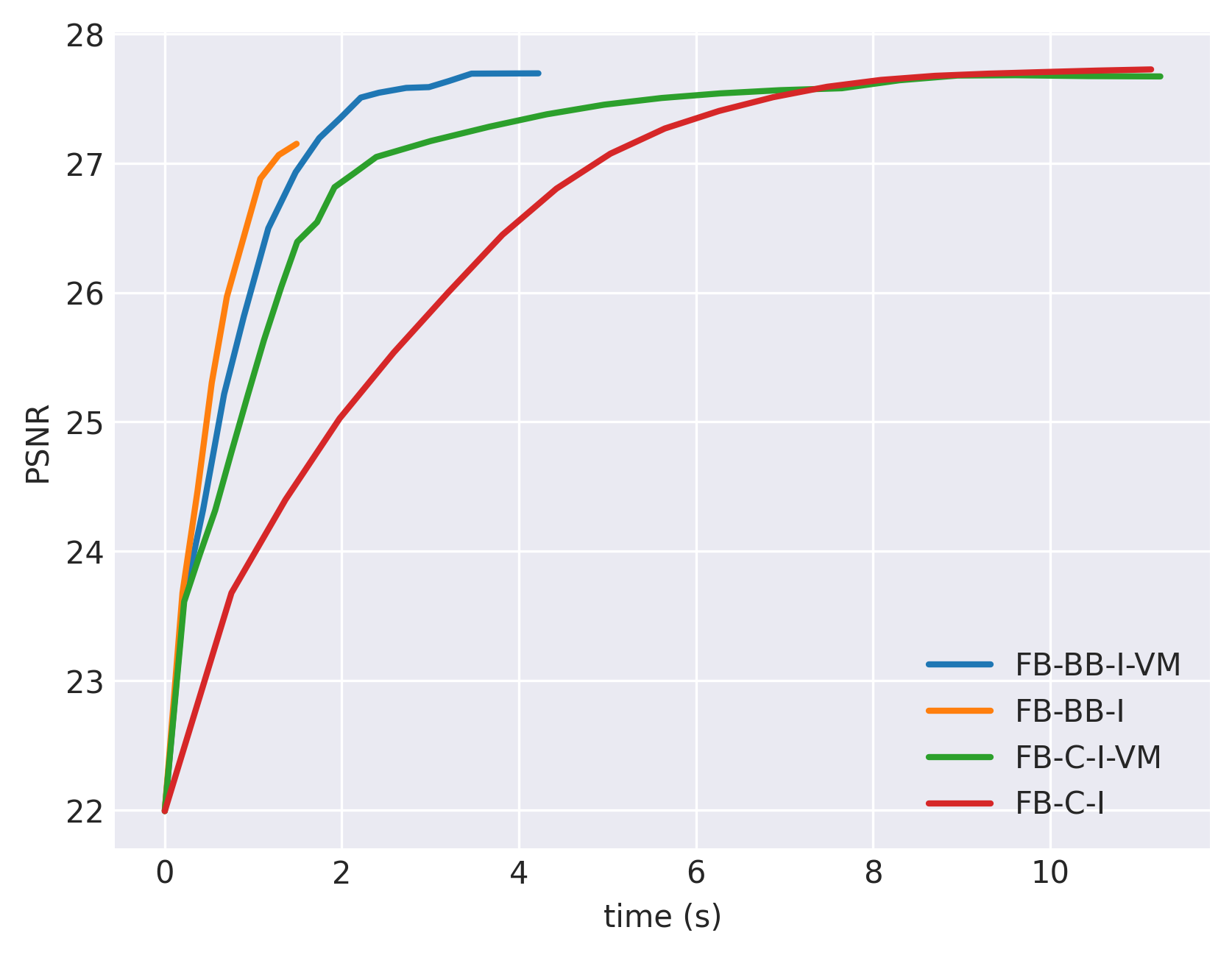}
        \caption{$N=1$}
    \end{subfigure}\hfill
    \begin{subfigure}[b]{0.3\textwidth}
        \centering
        \includegraphics[width=\textwidth]{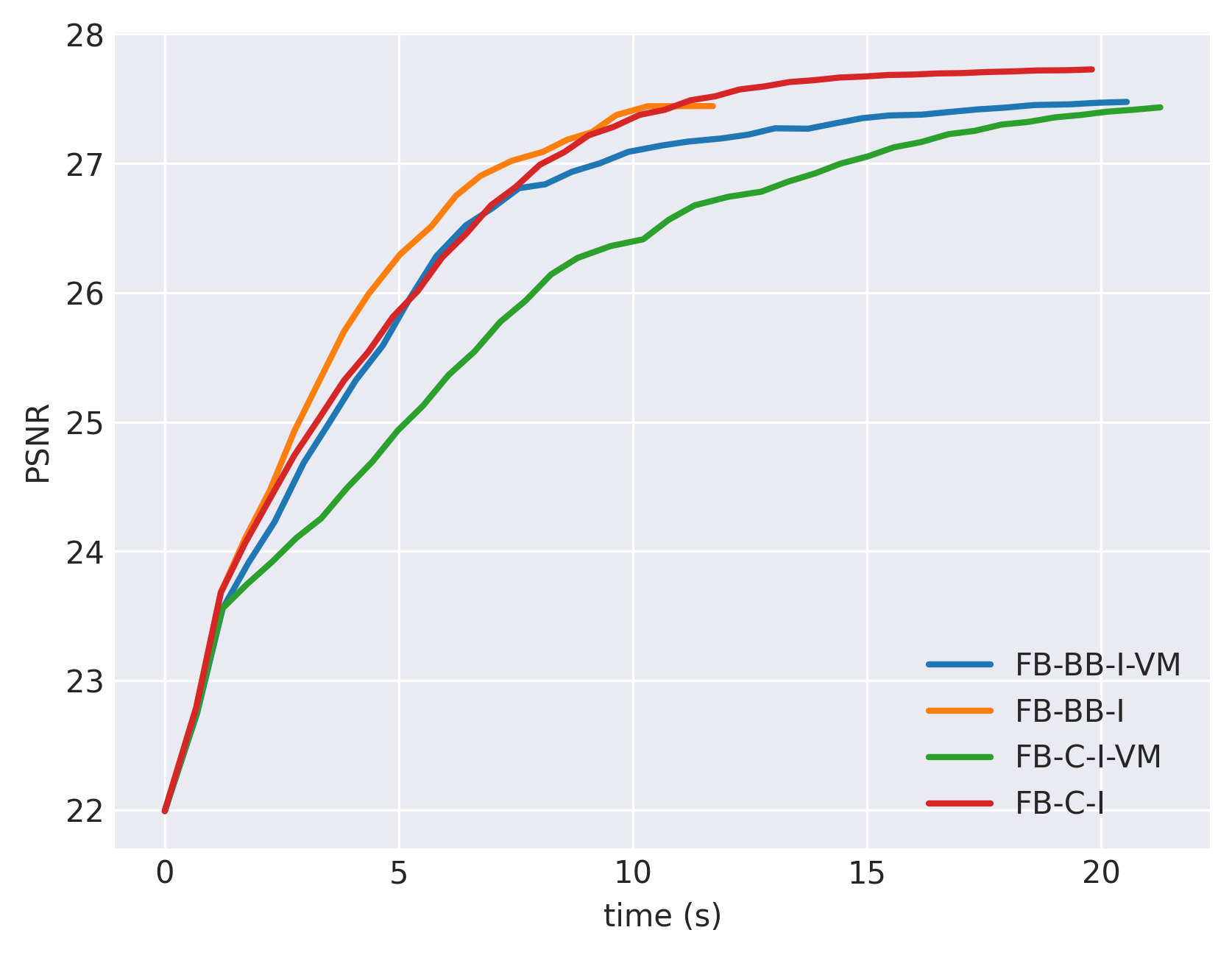}
        \caption{$N=2$}
    \end{subfigure}\hfill
    \begin{subfigure}[b]{0.3\textwidth}
        \centering
        \includegraphics[width=\textwidth]{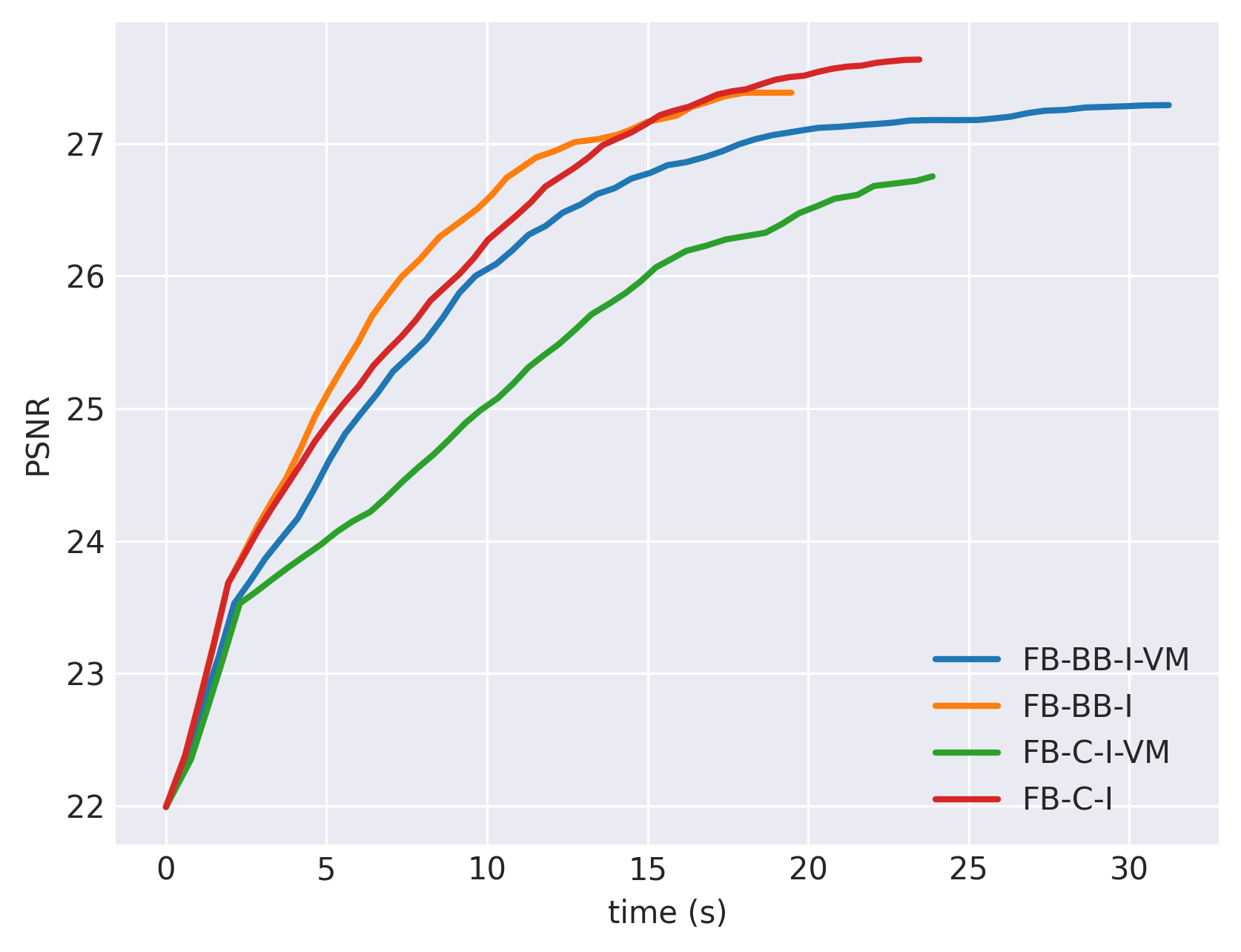}
        \caption{$N=4$}
    \end{subfigure}
    \caption{Performance comparison for the image deblurring problem on the \textit{leaves} image, varying the number of blocks $N$. The top row compares the Block-PHILA-{FB} variants (without inertia) with and without the variable metric scaling. The bottom row shows the same comparison for the Block-PHILA-{FB-I} versions (with inertia).
    }
    \label{fig:vm_graph}
\end{figure}

\subsection{Image super-resolution}\label{sec:image_SR}
In this section, assuming additive Gaussian noise with standard deviation $\nu\in\R^+$, we consider problem \eqref{eq:GS_PNP_problem} with data fidelity term $\mathcal{D}(x) = \frac{1}{2}\|Ax-b\|^2$. Here, the forward operator is defined as $A = SH$, where $H\in\mathbb{R}^{n\times n}$ is a convolution operator with an anti-aliasing kernel, and $S\in\mathbb{R}^{m\times n}$ is a standard $s$-fold downsampling matrix such that $n = s^2 m$. Specifically, the blur operator $H$ is the same as that used in the image deblurring problems discussed in Section \ref{sec:image_deblurring}, while the downsampled images correspond to a scale factor $s = 2$. The data $b\in\mathbb{R}^n$  represents a low-resolution image obtained by the high-resolution image  $x\in\mathbb{R}^n$ via $y = SHx + \eta$, where both the reference images and the noise $\eta$ are set as in Section \ref{sec:image_deblurring}. The values for $\sigma$ and $\textcolor{black}{\varrho}$ in \eqref{eq:GS_PNP_problem} are fixed to $2\nu$ and $0.065$, respectively, following \cite{hurault2022gradient}, where the same test problem was considered. 
All the compared algorithms were initialized using as $x_0$ a bicubic interpolation of $b$ (with a proper shift correction as suggested in \cite{Zhang-et-al-2021}). We refer again the reader to Appendix \ref{app:prox_computation_exact} and Appendix \ref{app:prox_computation} for the computation of the proximal point needed by Block-PHILA FB variants and GS-PnP.

Table \ref{tab:sr} reports the objective function values, PSNR, number of iterations, and computational times achieved by the compared methods when condition \eqref{eq:stop_Crit} is satisfied with $\varepsilon = 10^{-5}$. For each image and block configuration, the highest PSNR values are in bold, while the lowest computational times are underlined.
\begin{table}
\renewcommand{\arraystretch}{1.2}
\setlength{\tabcolsep}{5pt}
\adjustbox{max width=\textwidth}{%
\centering
\begin{tabular}{p{1cm} l *{3}{c c c c}}
\toprule
 &  & \multicolumn{4}{c}{\textit{Butterfly}} & \multicolumn{4}{c}{\textit{Leaves}} & \multicolumn{4}{c}{\textit{Starfish}} \\
\cmidrule(lr){3-6}\cmidrule(lr){7-10}\cmidrule(lr){11-14}
  &  & $F$ & PSNR & itr & time & $F$ & PSNR & itr & time & $F$ & PSNR & itr & time \\
\midrule
\multirow{11}{*}{\parbox{1cm}{\centering \rotatebox{90}{$N=1$}}} & GS-PnP & 33.87 & 26.77 & 43 & 26.30 & 34.04 & 26.79 & 63 & 38.34 & 34.31 & 27.41 & 24 & 14.63 \\
 & DPIR & - & 25.84 & 8 & \underline{3.67} & - & 25.51 & 8 & \underline{3.46} & - & 27.14 & 8 & \underline{3.43} \\
 \cmidrule(l){2-14}  & BCRED & - & 25.23 & 200 & 64.28 & - & 23.83 & 200 & 65.66 & - & 26.18 & 200 & 65.87 \\
 & {FB-BB-I} & 33.87 & 26.93 & 11 & 7.36 & 34.12 & 26.49 & 11 & 8.22 & 34.31 & \textbf{27.49} & 10 & 6.86 \\
 & {FB-BB} & 33.86 & 26.82 & 22 & 14.34 & 34.04 & 26.83 & 31 & 19.94 & 34.31 & 27.42 & 13 & 8.17 \\
 & {FB-C-I} & 33.86 & \textbf{27.00} & 22 & 13.76 & 34.03 & \textbf{26.92} & 33 & 20.34 & 34.31 & 27.48 & 16 & 9.99 \\
 & {FB-C} & 33.87 & 26.77 & 43 & 26.69 & 34.04 & 26.79 & 63 & 38.90 & 34.31 & 27.41 & 24 & 15.41 \\
\cmidrule(l){2-14} & {GD-BB-I} & 33.87 & 26.71 & 63 & 66.36 & 34.04 & 26.75 & 99 & 105.73 & 34.31 & 27.39 & 34 & 36.70 \\
 & {GD-BB} & 33.87 & 26.71 & 77 & 81.34 & 34.05 & 26.74 & 118 & 131.05 & 34.31 & 27.40 & 44 & 49.97 \\
 & {GD-C-I} & 33.87 & 26.82 & 23 & 15.75 & 34.04 & 26.83 & 34 & 22.88 & 34.31 & 27.45 & 16 & 11.03 \\
 & {GD-C} & 33.88 & 26.53 & 48 & 36.24 & 34.09 & 26.37 & 53 & 40.74 & 34.31 & 27.35 & 28 & 22.25 \\
\midrule
\multirow{9}{*}{\parbox{1cm}{\centering \rotatebox{90}{$N=2$}}} & BCRED & - & 24.78 & 200 & 50.61 & - & 23.43 & 200 & 51.60 & - & 26.03 & 200 & 52.93 \\
 & {FB-BB-I} & 33.88 & \textbf{26.94} & 19 & \underline{10.60} & 34.13 & 26.44 & 17 & \underline{9.99} & 34.34 & 27.41 & 13 & \underline{7.26} \\
 & {FB-BB} & 33.87 & 26.74 & 38 & 20.55 & 34.05 & 26.77 & 51 & 27.61 & 34.31 & 27.41 & 26 & 13.96 \\
 & {FB-C-I} & 33.87 & 26.98 & 30 & 16.06 & 34.04 & \textbf{26.90} & 59 & 31.40 & 34.31 & \textbf{27.50} & 26 & 14.10 \\
 & {FB-C} & 33.87 & 26.72 & 72 & 38.44 & 34.05 & 26.69 & 96 & 51.54 & 34.31 & 27.37 & 38 & 20.56 \\
\cmidrule(l){2-14} & {GD-BB-I} & 33.88 & 26.60 & 98 & 92.80 & 34.06 & 26.63 & 148 & 142.02 & 34.31 & 27.35 & 54 & 52.26 \\
 & {GD-BB} & 33.88 & 26.61 & 120 & 115.64 & 34.07 & 26.55 & 156 & 158.36 & 34.40 & 26.99 & 20 & 20.65 \\
 & {GD-C-I} & 33.91 & 26.44 & 28 & 16.34 & 34.05 & 26.74 & 54 & 30.98 & 34.32 & 27.39 & 23 & 13.53 \\
 & {GD-C} & 33.95 & 26.01 & 51 & 33.15 & 34.14 & 26.10 & 81 & 53.37 & 34.35 & 27.12 & 30 & 20.45 \\
\midrule
\multirow{9}{*}{\parbox{1cm}{\centering \rotatebox{90}{$N=4$}}} & BCRED & - & 24.08 & 200 & 43.76 & - & 22.74 & 200 & 45.45 & - & 25.79 & 200 & 45.59 \\
 & {FB-BB-I} & 33.88 & \textbf{26.81} & 41 & \underline{20.19} & 34.10 & 26.55 & 41 & \underline{20.52} & 34.31 & 27.46 & 40 & 19.51 \\
 & {FB-BB} & 33.87 & 26.69 & 60 & 28.37 & 34.05 & 26.72 & 85 & 39.60 & 34.31 & 27.38 & 38 & 17.73 \\
 & {FB-C-I} & 33.88 & 26.95 & 56 & 25.28 & 34.07 & \textbf{26.73} & 60 & 27.17 & 34.32 & \textbf{27.48} & 36 & \underline{16.53} \\
 & {FB-C} & 33.88 & 26.52 & 103 & 46.93 & 34.08 & 26.48 & 136 & 62.24 & 34.32 & 27.31 & 60 & 28.07 \\
\cmidrule(l){2-14} & {GD-BB-I} & 33.90 & 26.37 & 139 & 121.05 & 34.09 & 26.37 & 200 & 176.51 & 34.41 & 26.99 & 36 & 31.48 \\
 & {GD-BB} & 33.90 & 26.36 & 163 & 147.76 & 34.13 & 26.20 & 200 & 185.48 & 34.42 & 26.92 & 34 & 36.08 \\
 & {GD-C-I} & 33.90 & 26.44 & 56 & 28.08 & 34.07 & 26.58 & 84 & 41.83 & 34.31 & 27.36 & 44 & 22.88 \\
 & {GD-C} & 34.18 & 25.33 & 51 & 29.03 & 34.32 & 25.33 & 88 & 51.73 & 34.49 & 26.78 & 28 & 16.68 \\
\bottomrule
\end{tabular}}
\caption{Results achieved by the Block-PHILA variants and BCRED in solving the super-resolution problems by varying the number $N$ of blocks. For $N=1$, the results obtained by GS-PnP and DPIR are also included. For each image and value of $N$, the best PSNR is shown in bold and the lowest time is underlined.}
\label{tab:sr}
\end{table}

Figure \ref{fig:sr_graph} reports the PSNR  and objective function values achieved by Block-PHILA-FB-BB-I and Block-PHILA-GD-C-I on the \textit{Butterfly} image with respect to the computational time, for different numbers of blocks. In the case $N=1$, the results obtained by GS-PnP are also included.  
\begin{figure}
    \centering
    \begin{subfigure}[b]{0.3\textwidth}
        \centering
        \includegraphics[width=\textwidth]{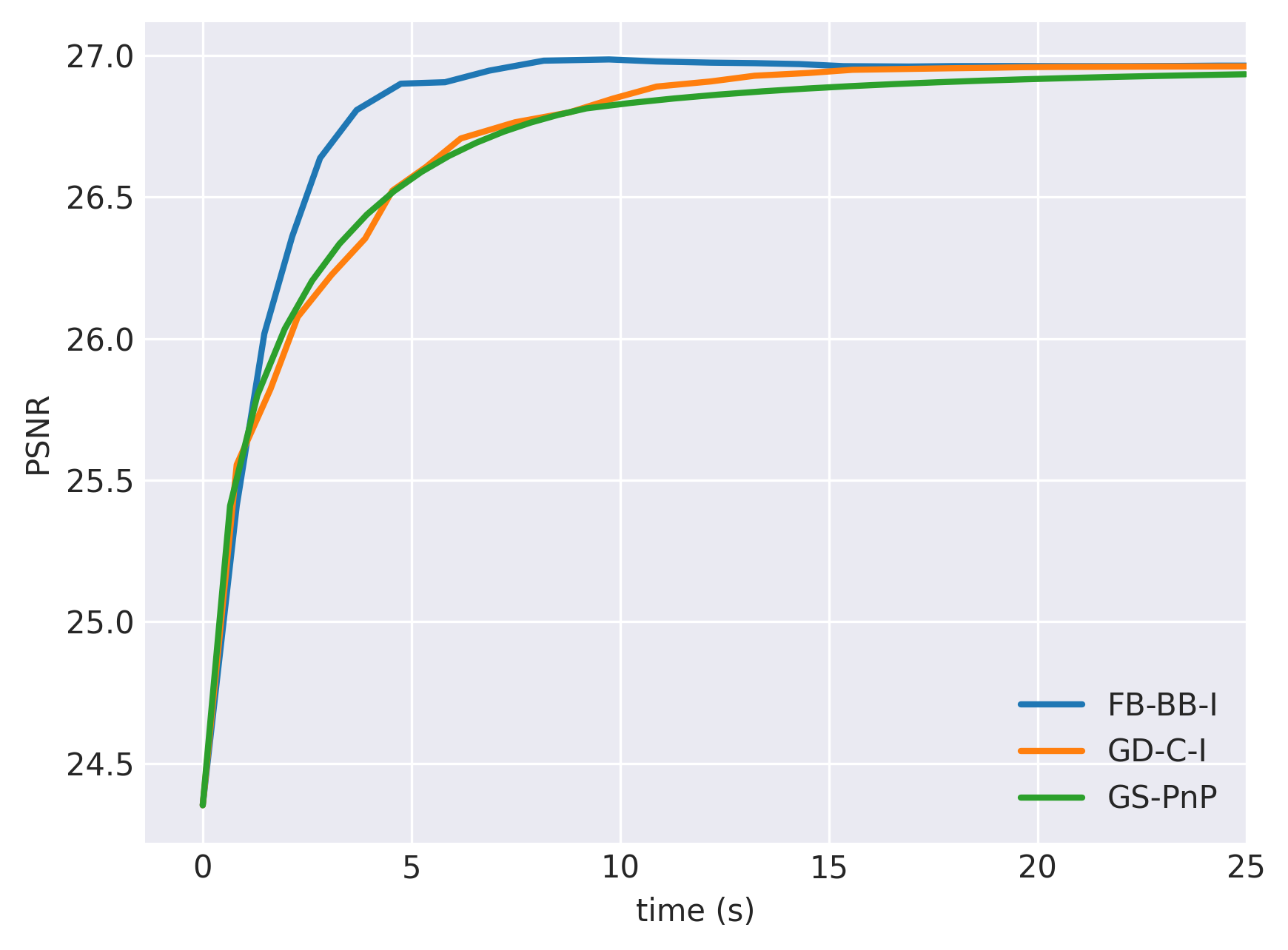}
        \caption{$N=1$}
    \end{subfigure}\hfill
    \begin{subfigure}[b]{0.3\textwidth}
        \centering
        \includegraphics[width=\textwidth]{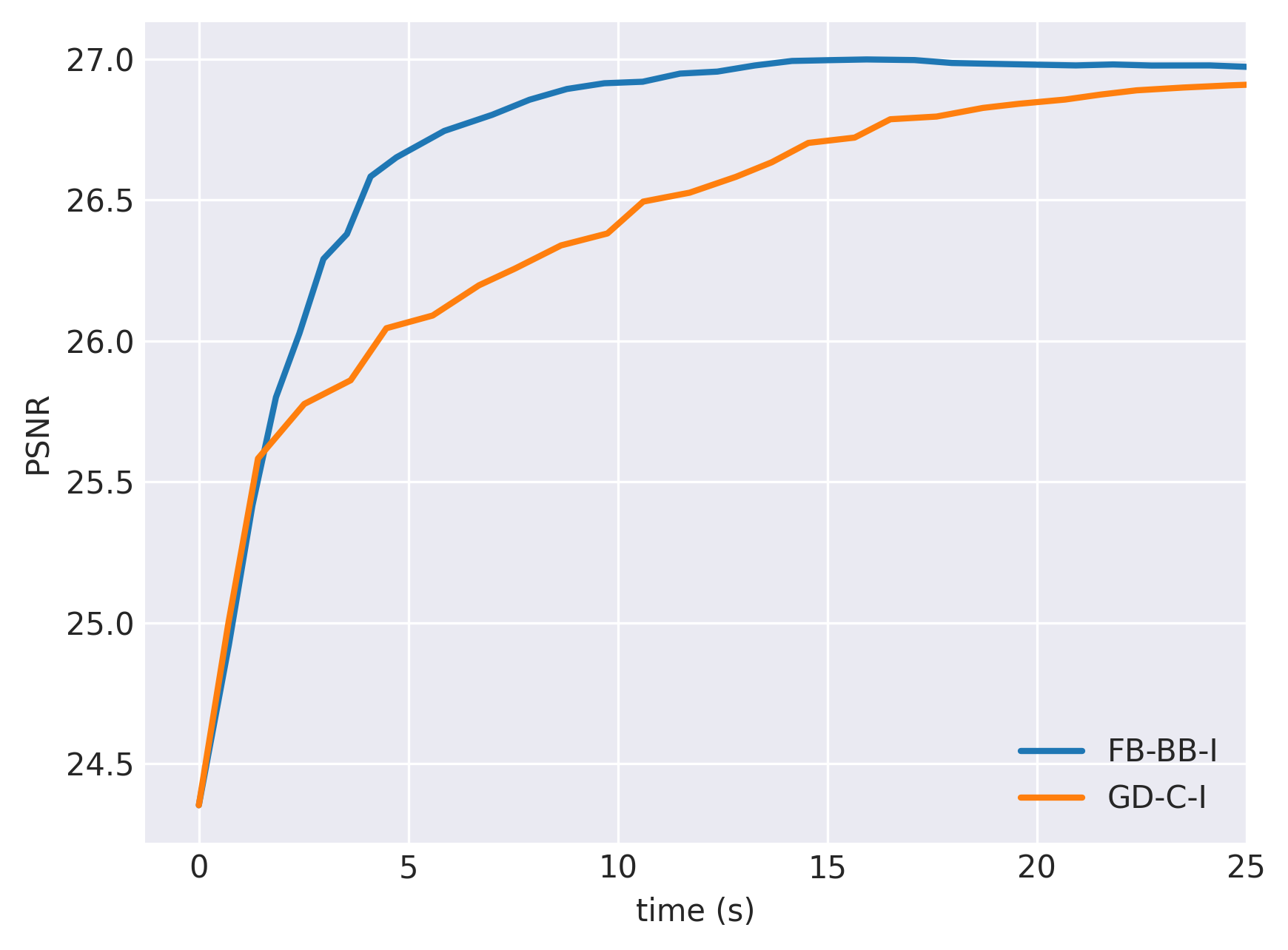}
        \caption{$N=2$}
    \end{subfigure}\hfill
    \begin{subfigure}[b]{0.3\textwidth}
        \centering
        \includegraphics[width=\textwidth]{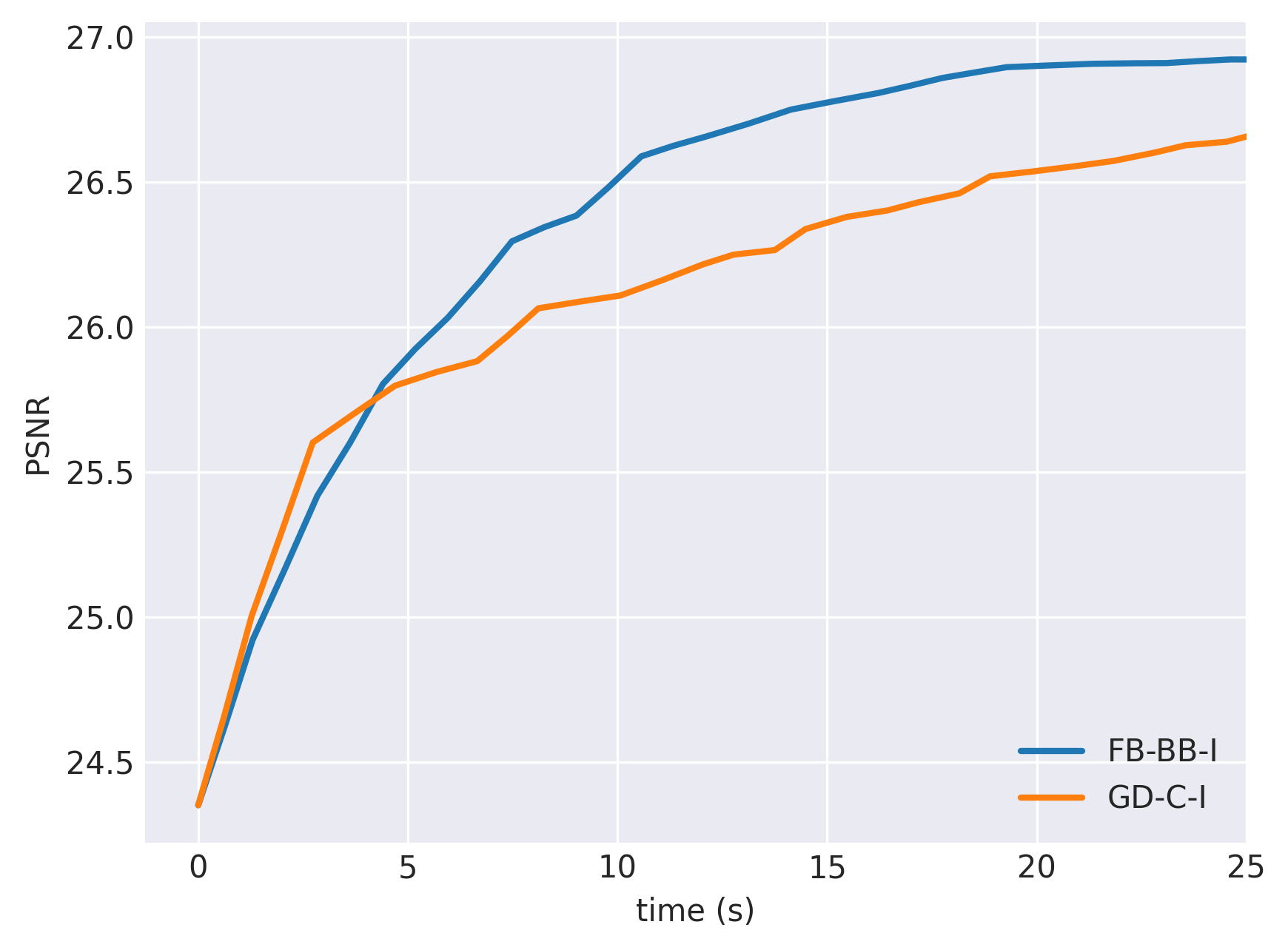}
        \caption{$N=4$}
    \end{subfigure}\\
    \begin{subfigure}[b]{0.3\textwidth}
        \centering
        \includegraphics[width=\textwidth]{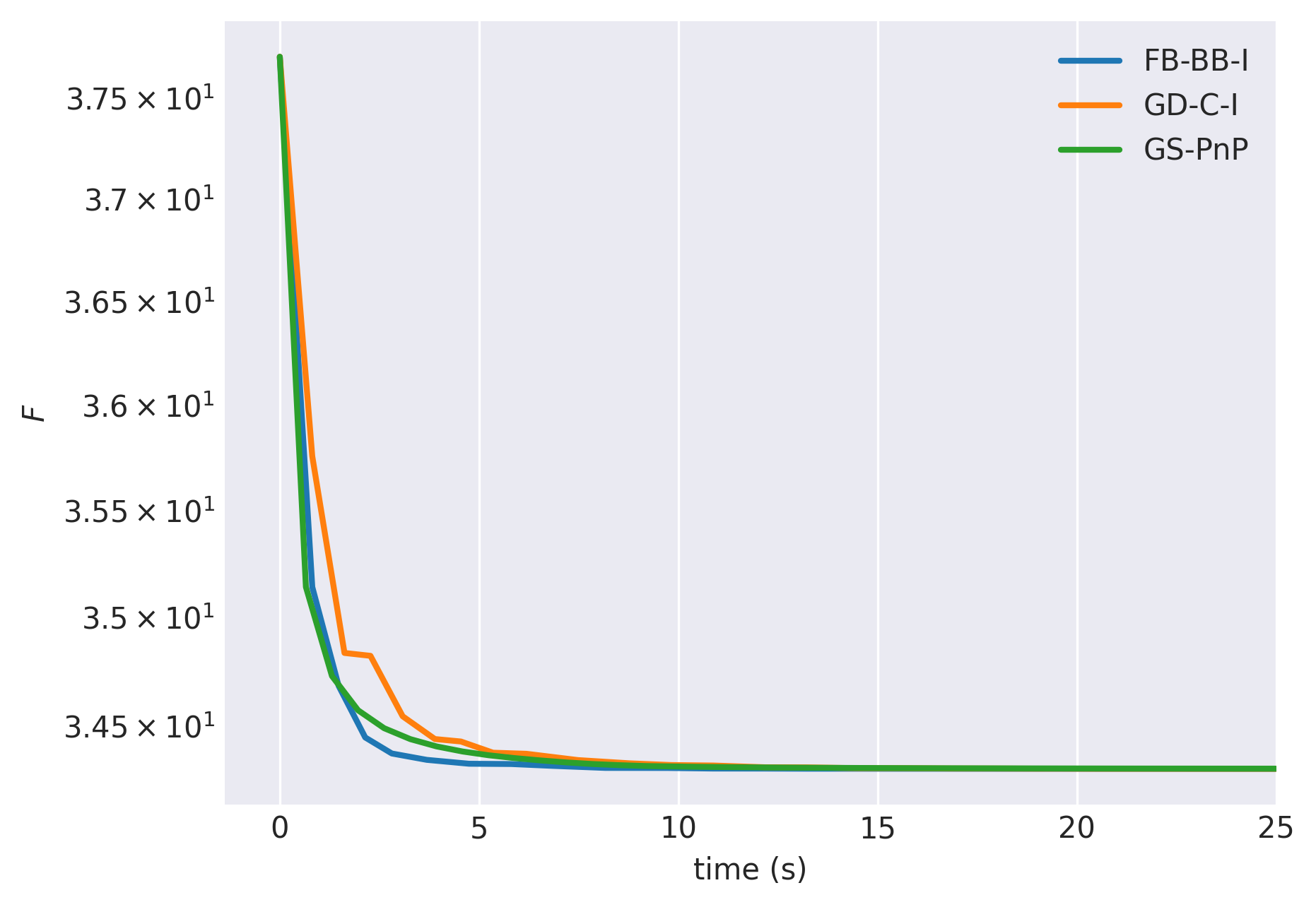}
        \caption{$N=1$}
    \end{subfigure}\hfill
    \begin{subfigure}[b]{0.3\textwidth}
        \centering
        \includegraphics[width=\textwidth]{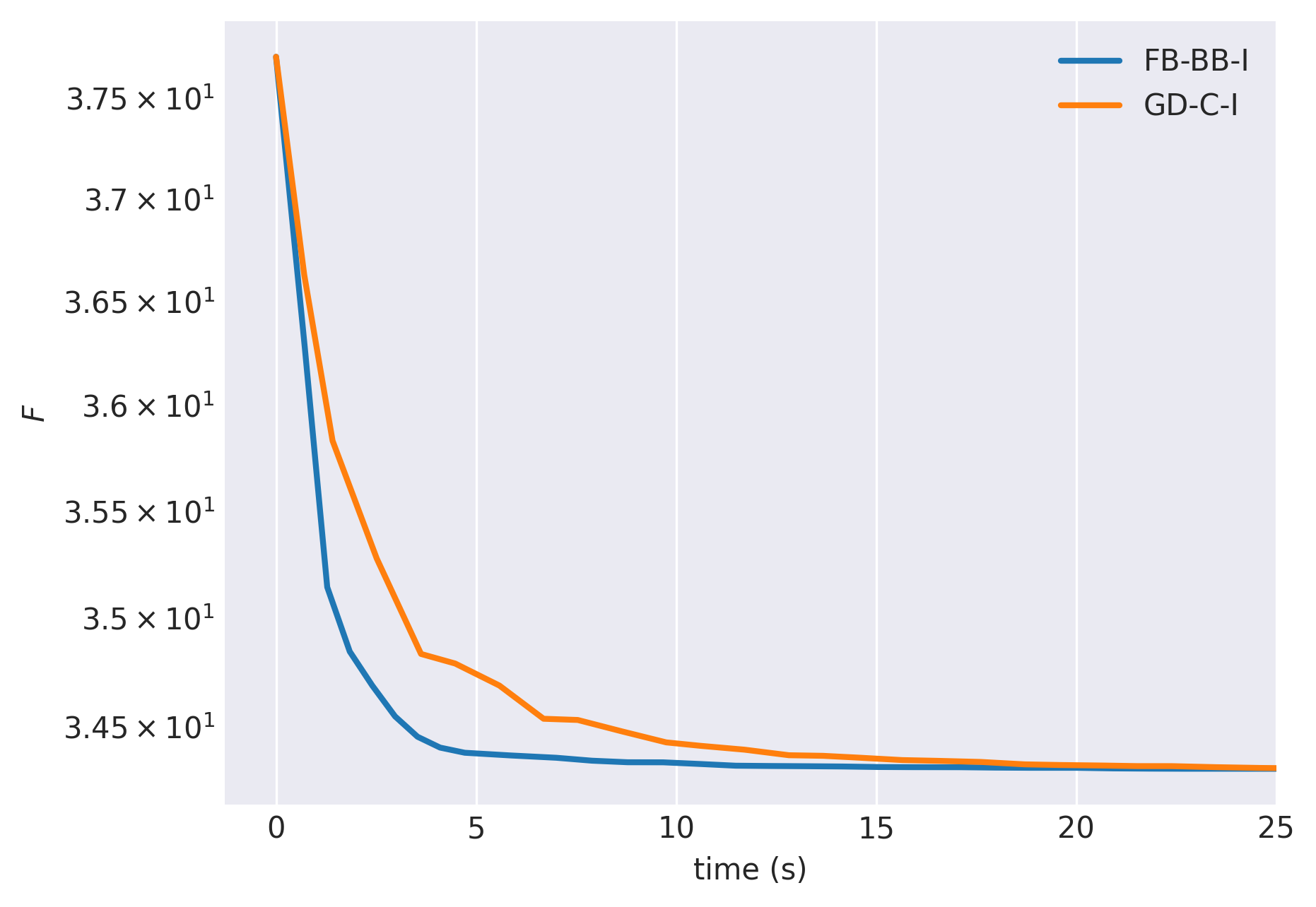}
        \caption{$N=2$}
    \end{subfigure}\hfill
    \begin{subfigure}[b]{0.3\textwidth}
        \centering
        \includegraphics[width=\textwidth]{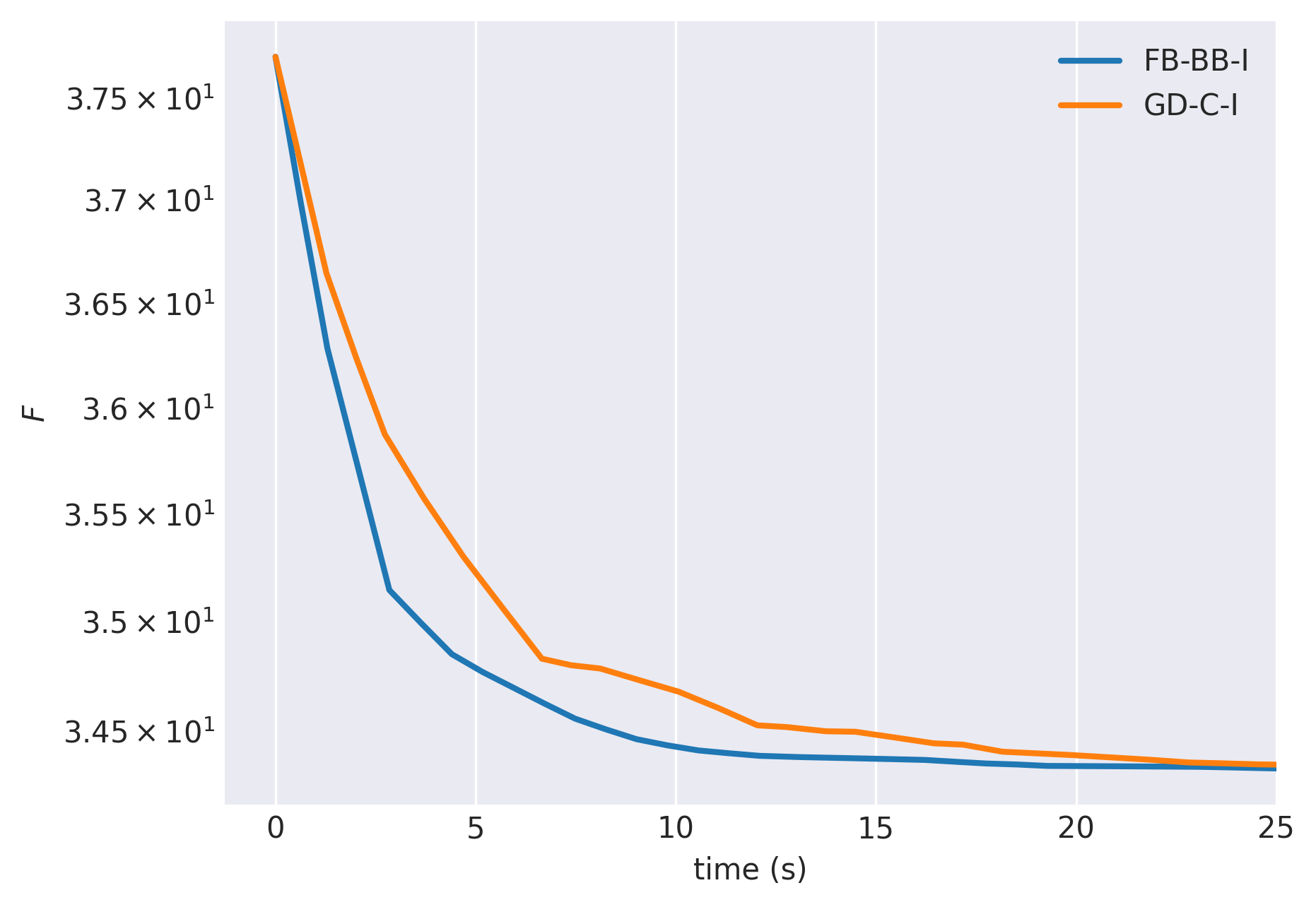}
        \caption{$N=4$}
    \end{subfigure}
    \caption{Results achieved solving the image super resolution problem on \textit{starfish} by varying the number $N$ of blocks. For $N=1$ the results obtained by GS-PnP are also included.}
    \label{fig:sr_graph}
\end{figure}
The results in Table \ref{tab:sr} and Figure \ref{fig:sr_graph} are consistent with the conclusions drawn in Section \ref{sec:image_deblurring} from the image deblurring experiments. In particular, across all tested images and block configurations, the Block-PHILA variants FB-C-I, FB-BB and FB-BB-I consistently satisfy the stopping criterion faster than the competing methods, while providing comparable or higher PSNR values. Moreover, the Block-PHILA FB versions based on the splitting~\eqref{eq:splitting_PnP} generally outperform the GD variants exploiting the differentiability of both the fidelity and regularization terms. This is further confirmed by Figure \ref{fig:sr_results} which reports the reconstructions achieved by GS-PnP, Block-PHILA FB-BB-I and Block-PHILA GD-C-I after a time budget of 2 seconds and an additional denoising step. For any number of blocks, Block-PHILA FB-BB-I produces sharp and accurate reconstructions without the artifacts observed in Block-PHILA GD-C-I for $N=2$ or $N=4$. We can conclude that the block-wise formulation remains advantageous for super-resolution, enabling efficient processing of possibly large-scale images with reduced memory usage and computational effort, while maintaining high reconstruction accuracy.

\begin{figure}
	\centering
	\subfloat[Ground Truth]{
	   \scalebox{1}{
    	\begin{tikzpicture}
    	\begin{scope}[spy using outlines={rectangle,blue,magnification=2,size=1.5cm}]
    	\node [name=c]{	\includegraphics[height=3cm]{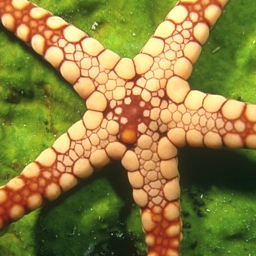}};
    	\spy on (0.8,0.1) in node [name=c1]  at (0.75,-2.25);
    	\spy on (-0.8,-0.1) in node [name=c1]  at (-0.75,-2.25);
    	\end{scope}
    	\end{tikzpicture}}
        }
	\subfloat[Corrupted data]{
	\scalebox{1}{
	\begin{tikzpicture}
	\begin{scope}[spy using outlines={rectangle,blue,magnification=2,size=1.5cm}]
	\node [name=c]{	\includegraphics[height=3cm]{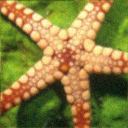}};
	\spy on (0.8,0.1) in node [name=c1]  at (0.75,-2.25);
	\spy on (-0.8,-0.1) in node [name=c1]  at (-0.75,-2.25);
	\end{scope}
	\end{tikzpicture}}
    }
	\subfloat[GS-PnP]{
	\scalebox{1}{
	\begin{tikzpicture}
	\begin{scope}[spy using outlines={rectangle,blue,magnification=2,size=1.5cm}]
	\node [name=c]{	\includegraphics[height=3cm]{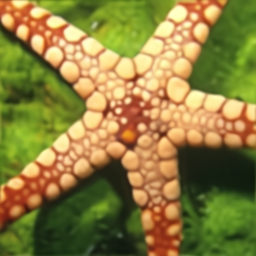}};
	\spy on (0.8,0.1) in node [name=c1]  at (0.75,-2.25);
	\spy on (-0.8,-0.1) in node [name=c1]  at (-0.75,-2.25);
	\end{scope}
	\end{tikzpicture}}
    }\\
	
	\subfloat[$N=1$]{
	\scalebox{1}{
	\begin{tikzpicture}
	\begin{scope}[spy using outlines={rectangle,blue,magnification=2,size=1.5cm}]
	\node [name=c]{	\includegraphics[height=3cm]{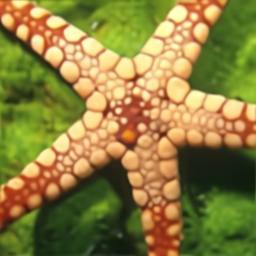}};
	\spy on (0.8,0.1) in node [name=c1]  at (0.75,-2.25);
	\spy on (-0.8,-0.1) in node [name=c1]  at (-0.75,-2.25);
	\end{scope}
	\end{tikzpicture}}
    }
	\subfloat[$N=2$]{
	\scalebox{1}{
	\begin{tikzpicture}
	\begin{scope}[spy using outlines={rectangle,blue,magnification=2,size=1.5cm}]
	\node [name=c]{	\includegraphics[height=3cm]{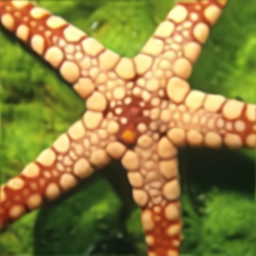}};
	\spy on (0.8,0.1) in node [name=c1]  at (0.75,-2.25);
	\spy on (-0.8,-0.1) in node [name=c1]  at (-0.75,-2.25);
	\end{scope}
	\end{tikzpicture}}
    }
	\subfloat[$N=4$]{
	\scalebox{1}{
	\begin{tikzpicture}
	\begin{scope}[spy using outlines={rectangle,blue,magnification=2,size=1.5cm}]
	\node [name=c]{	\includegraphics[height=3cm]{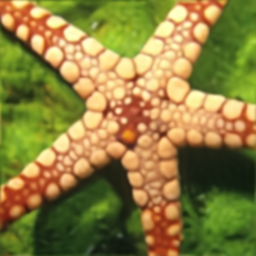}};
	\spy on (0.8,0.1) in node [name=c1]  at (0.75,-2.25);
	\spy on (-0.8,-0.1) in node [name=c1]  at (-0.75,-2.25);
	\end{scope}
	\end{tikzpicture}}
    }\\

	\subfloat[$N=1$]{
	\scalebox{1}{
	\begin{tikzpicture}
	\begin{scope}[spy using outlines={rectangle,blue,magnification=2,size=1.5cm}]
	\node [name=c]{	\includegraphics[height=3cm]{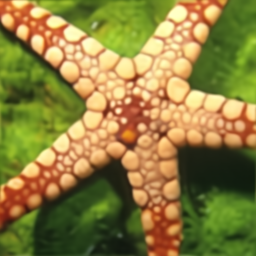}};
	\spy on (0.8,0.1) in node [name=c1]  at (0.75,-2.25);
	\spy on (-0.8,-0.1) in node [name=c1]  at (-0.75,-2.25);
	\end{scope}
	\end{tikzpicture}}
    }
	\subfloat[$N=2$]{
	\scalebox{1}{
	\begin{tikzpicture}
	\begin{scope}[spy using outlines={rectangle,blue,magnification=2,size=1.5cm}]
	\node [name=c]{	\includegraphics[height=3cm]{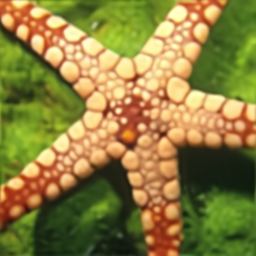}};
	\spy on (0.8,0.1) in node [name=c1]  at (0.75,-2.25);
	\spy on (-0.8,-0.1) in node [name=c1]  at (-0.75,-2.25);
	\end{scope}
	\end{tikzpicture}}
    }
	\subfloat[$N=4$]{
	\scalebox{1}{
	\begin{tikzpicture}
	\begin{scope}[spy using outlines={rectangle,blue,magnification=2,size=1.5cm}]
	\node [name=c]{	\includegraphics[height=3cm]{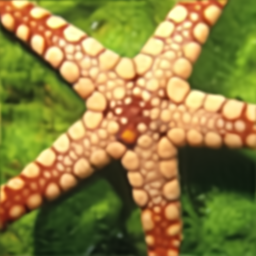}};
	\spy on (0.8,0.1) in node [name=c1]  at (0.75,-2.25);
	\spy on (-0.8,-0.1) in node [name=c1]  at (-0.75,-2.25);
	\end{scope}
	\end{tikzpicture}}
    }
    \caption{Reconstructions provided by GS-PnP, Block-PHILA FB-BB-I (second row) and Block-PHILA GD-C-I (third row) for the super-resolution problem. 
    }
    \label{fig:sr_results}
\end{figure}

\subsection{{Large scale scenario}}\label{sec:largescale}
\textcolor{black}{The experiments in the previous sections involved a least-squares data fidelity term and images of moderate size. In this section, we address a test problem that departs from both features, namely the restoration of high-resolution images corrupted by blur and Cauchy noise. This setting is of interest for two distinct reasons: it shows that the proposed framework is not confined to the Gaussian case, and it places the algorithm in a setting where the block-coordinate formulation is not merely convenient but necessary, as previously highlighted in Section \ref{subsec:memory_consumption}.}

{We recall that Cauchy noise is a heavy-tailed, impulsive noise which arises in applications such as synthetic aperture radar, sonar and low-frequency atmospheric imaging
\cite{Sciacchitano-et-al-2015}. This type of noise is characterized by a scale parameter $\gamma$, which determines its spread: higher values of $\gamma$ correspond to more extreme outliers due to the heavier tails. In this setting, it a standard practice to clip the measured data in $[0,1]$.}

{We solve problem \eqref{eq:GS_PNP_problem} where the data fidelity term for the Cauchy noise is defined as in \eqref{eq:cauchy_fidelity}. As already observed in \ref{remark_assumption1}, this term is continuously differentiable but \emph{not} convex along the coordinates. For this reason, we only compare the GD versions of Block-PHILA with an adapted version of the BCRED \cite{Sun-etal-2020}. In these experiments, we select a crop of size $1024\times 1024$ from an image of stars\footnote{\url{https://www.britannica.com/science/star-astronomy}}. The simulated measurement is generated considering the convolution with a Gaussian blur kernel
of size $25 \times 25$ with standard deviation $1.6$ (as in Section \ref{sec:image_deblurring} and further corruption with Cauchy noise having $\gamma=0.01$. The hyperparameters of the methods are selected as in Section \ref{sec:image_deblurring}, except for $\varrho=100$, $\sigma=0.09$ and the initial $\alpha=1e-5$. The number of blocks is set to $N=16$; we emphasize that the large size of the image prevents execution in a single-block configuration.  Full-image baselines such as DPIR and GS-PnP cannot be employed in this large-scale scenario, as their memory requirements exceed the limits of our resource-constrained architecture.}

{In Figure \ref{fig:cauchy_graph}, we report PSNR and the values of the objective function with respect to the number of iterations.  The GD-C is very slow compared to the other alternatives. It is evident that the use of the inertial term improves the performance of these approaches. It is evident that the inclusion of the inertial term significantly improves the performance of these approaches. Moreover, the BB adaptive stepsize selection enhances convergence by dynamically adapting the steplength to the local geometry of the objective function.} 

\begin{figure}
    \centering
    \begin{subfigure}[b]{0.45\textwidth}
        \centering
        \includegraphics[width=\textwidth]{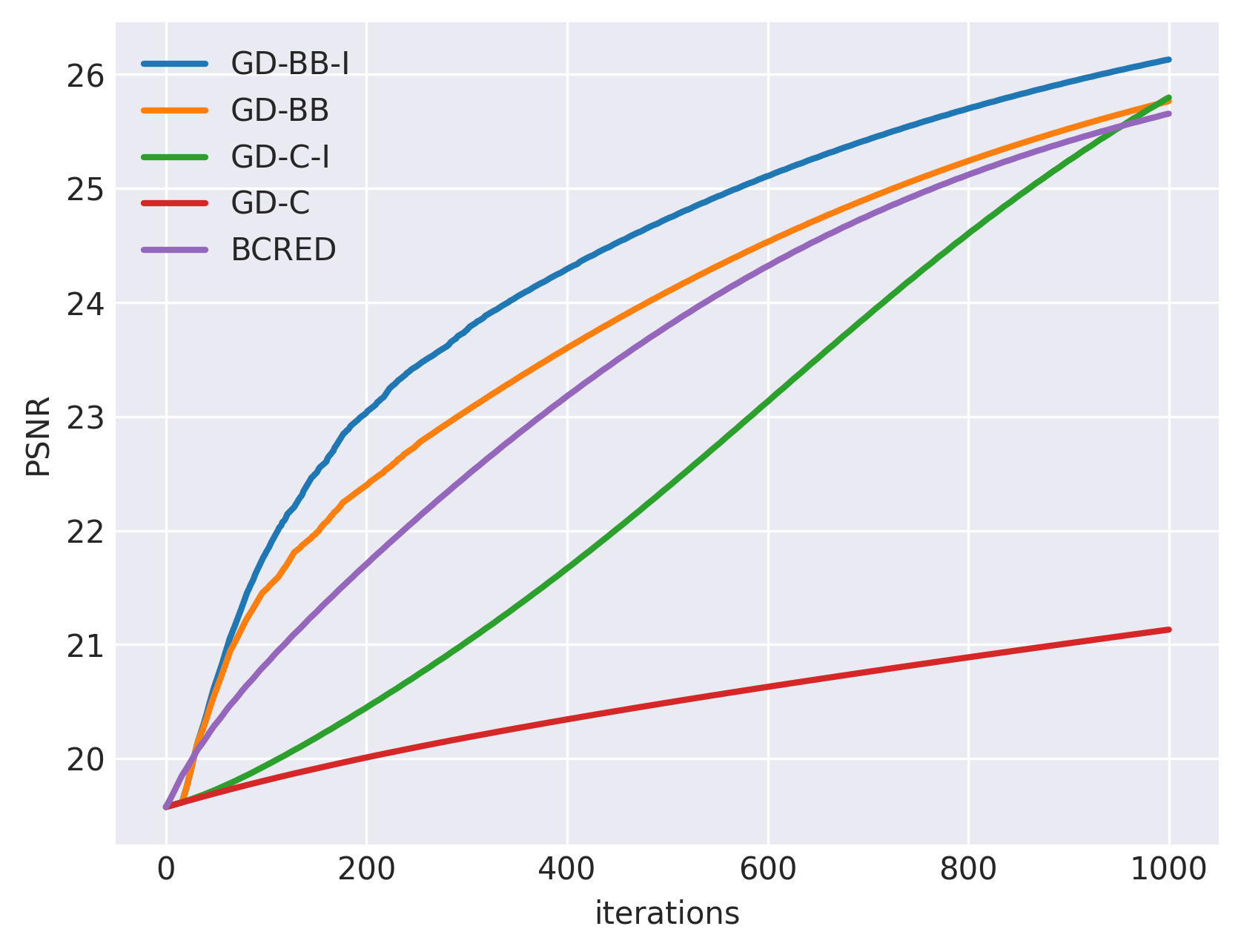}
    \end{subfigure}\hfill
    \begin{subfigure}[b]{0.45\textwidth}
        \centering
        \includegraphics[width=\textwidth]{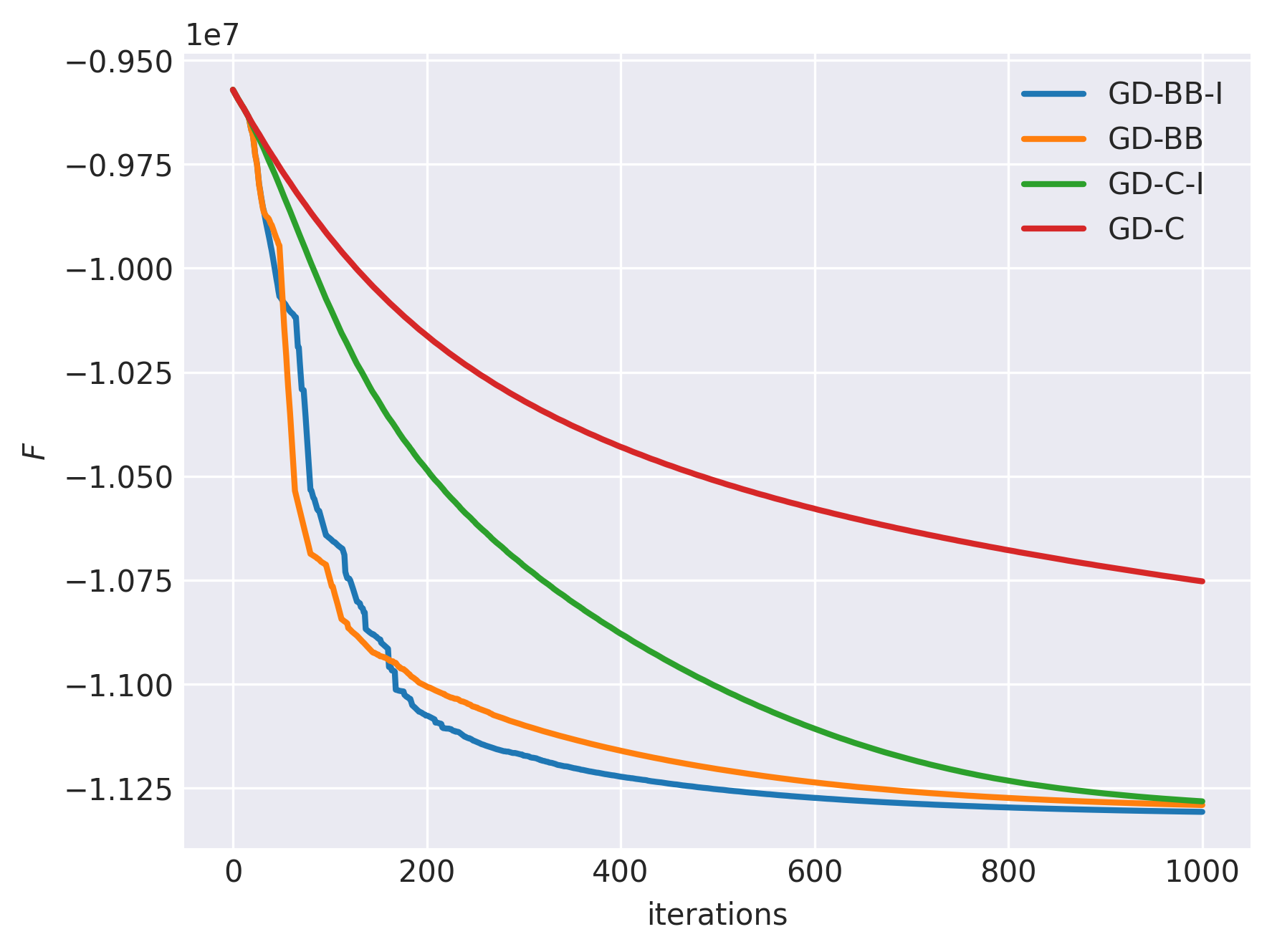}
    \end{subfigure}\hfill
    \caption{Results achieved solving the image deblurring problem on \textit{star} with Cauchy noise by the GD variants of Block-PHILA considering $N=16$ as number of blocks. For the PSNR the results of BCRED (adapted for the Cauchy setting) are also included.}
    \label{fig:cauchy_graph}
\end{figure}

{In Figure \ref{fig:cauchy_results}, we show the images reconstructed by BCRED and GD-BB-I. As can be observed, both methods achieve comparable visual quality and are equally effective in suppressing Cauchy noise artifacts. However, GD-BB-I  achieves a higher final PSNR, as shown in Figure \ref{fig:cauchy_graph}.}

\begin{figure}
	\centering
	\subfloat[Ground Truth]{
	   \scalebox{1}{
    	\begin{tikzpicture}
    	\begin{scope}[spy using outlines={rectangle,blue,magnification=5,size=3cm}]
    	\node [name=c]{	\includegraphics[height=3cm]{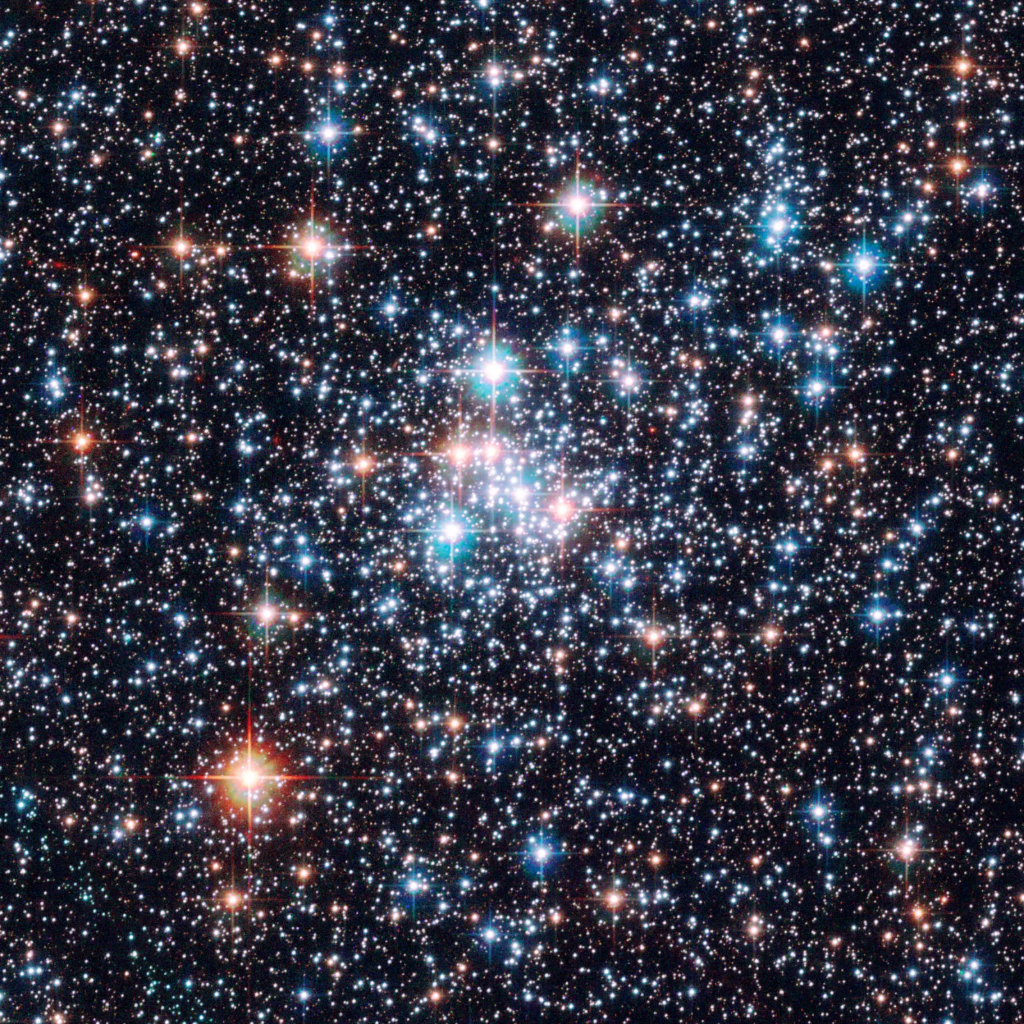}};
    	\spy on (0.7,0.1) in node [name=c1]  at (0.,-3.25);
    	\end{scope}
    	\end{tikzpicture}}
        }
	\subfloat[Corrupted data]{
	\scalebox{1}{
	\begin{tikzpicture}
	\begin{scope}[spy using outlines={rectangle,blue,magnification=5,size=3cm}]
	\node [name=c]{	\includegraphics[height=3cm]{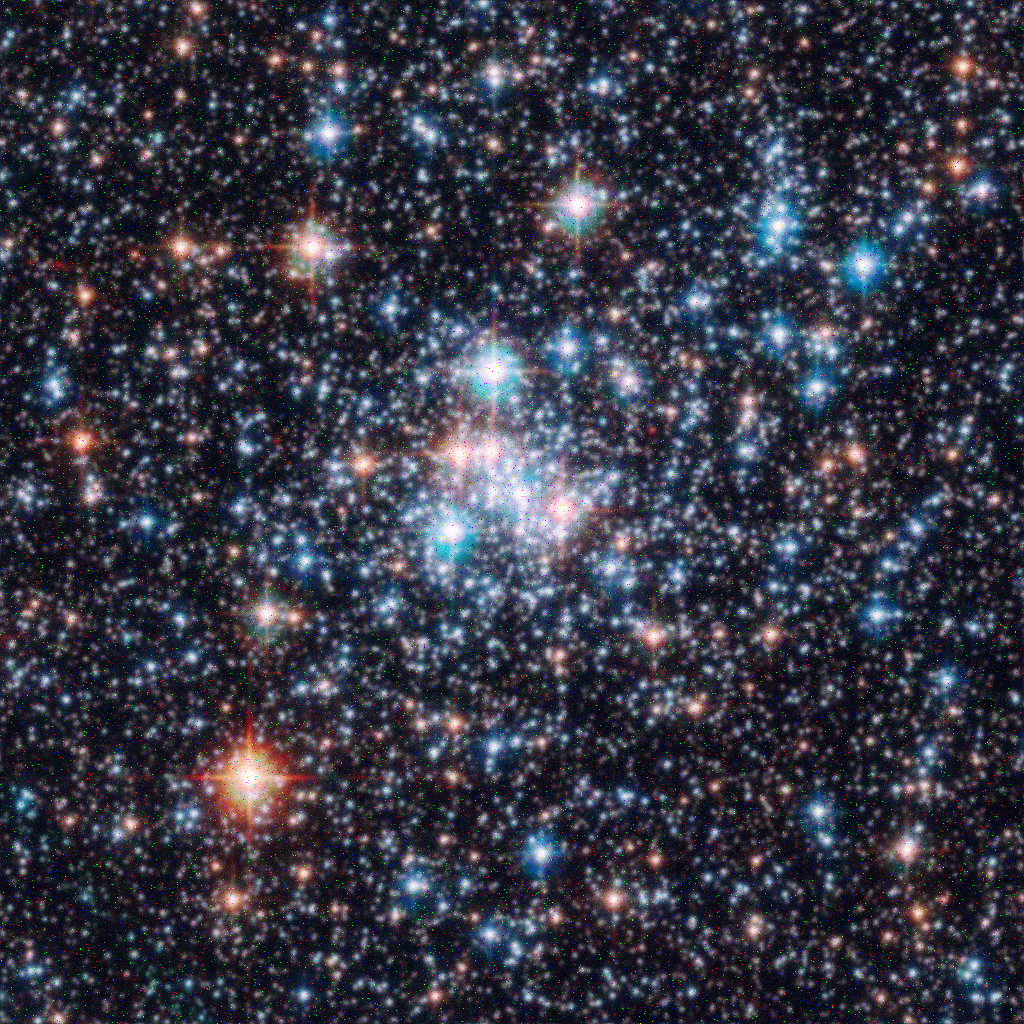}};
	\spy on (0.7,0.1) in node [name=c1]  at (0.,-3.25);
	\end{scope}
	\end{tikzpicture}}
    }
	\subfloat[BCRED]{
	\scalebox{1}{
	\begin{tikzpicture}
	\begin{scope}[spy using outlines={rectangle,blue,magnification=5,size=3cm}]
	\node [name=c]{	\includegraphics[height=3cm]{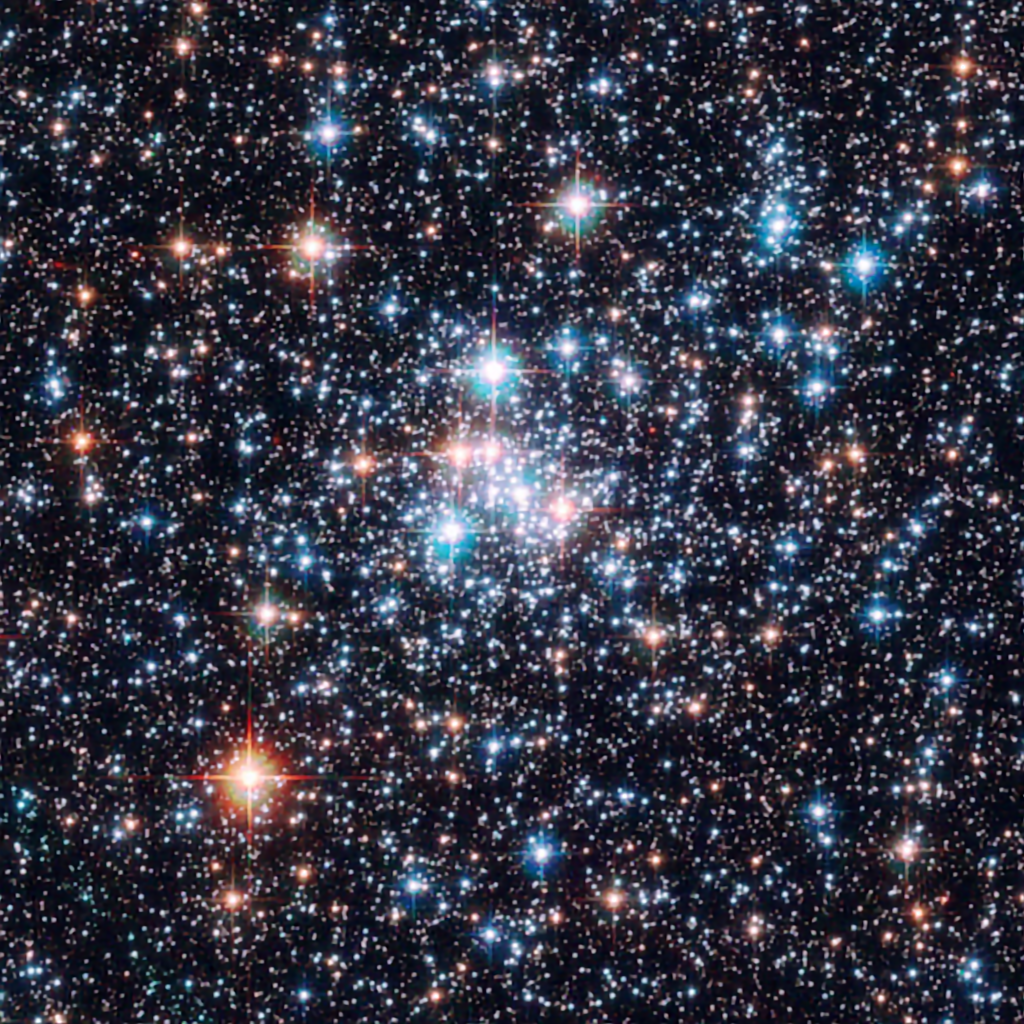}};
	\spy on (0.7,0.1) in node [name=c1]  at (0.,-3.25);
	\end{scope}
	\end{tikzpicture}}
    }
	\subfloat[GD-BB-I]{
	\scalebox{1}{
	\begin{tikzpicture}
	\begin{scope}[spy using outlines={rectangle,blue,magnification=5,size=3cm}]
	\node [name=c]{	\includegraphics[height=3cm]{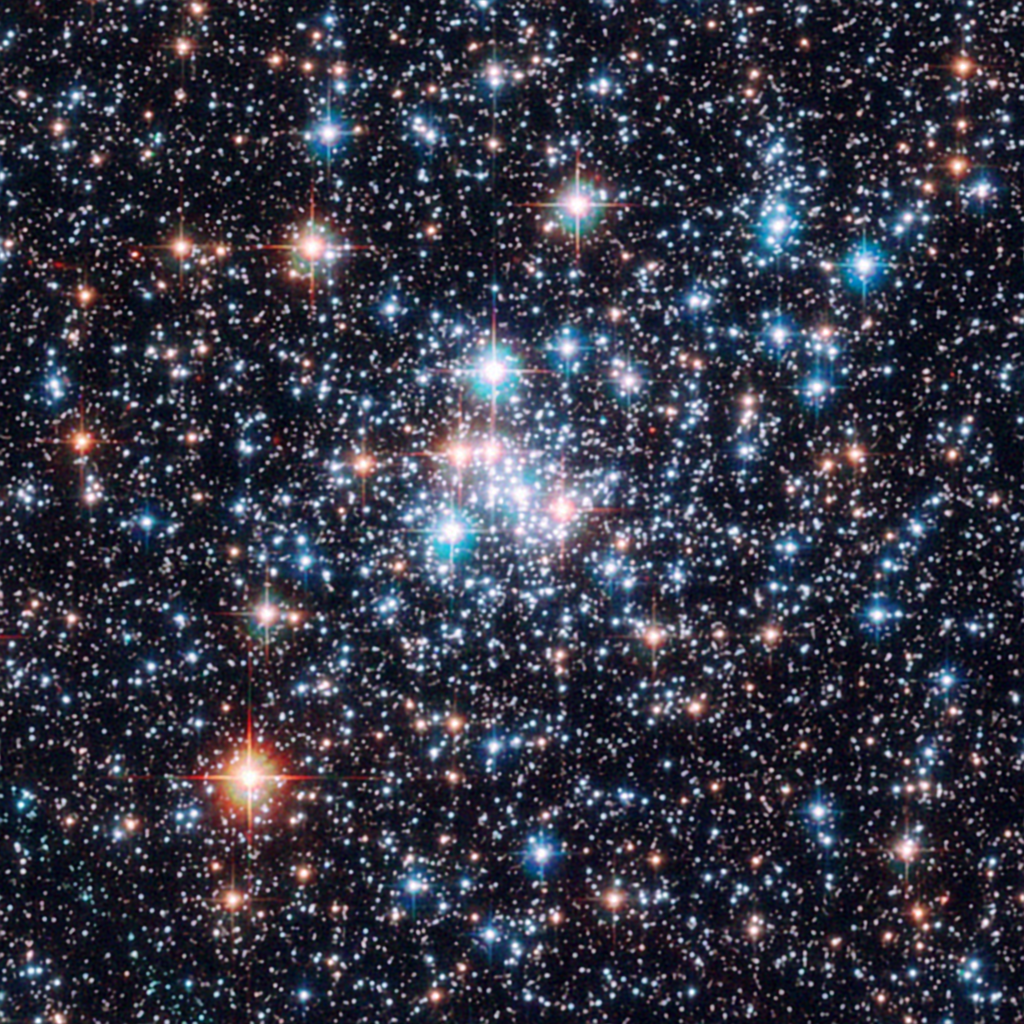}};
	\spy on (0.7,0.1) in node [name=c1]  at (0.,-3.25);
	\end{scope}
	\end{tikzpicture}}
    }
    \caption{Reconstructions provided by BCRED and Block-PHILA GD-BB-I for the deblurring problem problem with Cauchy noise.
    }
    \label{fig:cauchy_results}
\end{figure}

\section{Conclusions}\label{sec:conclusions}
In this paper we introduced a general block-coordinate forward-backward framework for the solution of non-convex and non-separable optimization problems. Convergence of the sequence of the iterates and related rates have been proved within the Kurdyka-Łojasiewicz framework. Building upon this theoretical framework, we developed a novel block-coordinate PnP approach based on Gradient Step denoisers. By decomposing the image into contiguous patches and exploiting the structural properties of convolutional neural networks, the suggested approach enables an efficient block-wise computation of the term related to the Gradient Step denoiser. Numerical experiments on imaging tasks, such as deblurring and super-resolution, show that the proposed block-coordinate PnP algorithm attains state-of-the-art reconstruction performance while significantly lowering GPU memory usage. Future work may concern the extension of our proposed block-coordinate forward-backward framework to nonsmooth objective functions, as well as the study of novel PnP line-search based approaches where the Gradient-Step denoiser replaces the proximal operator, rather than the gradient step, in the iterative procedure.

\section*{Acknowledgments}
All authors are members of the Gruppo Nazionale per il Calcolo Scientifico (GNCS) of the Italian Istituto Nazionale di Alta Matematica (INdAM), which is kindly acknowledged.\\
All authors are partially supported by  the Italian MUR through the PRIN 2022 PNRR Project “Advanced optimization METhods for automated central veIn Sign detection in multiple sclerosis from magneTic resonAnce imaging (AMETISTA)”, project code: P2022J9SNP (CUP  E53D23017980001), under the National Recovery and Resilience Plan (PNRR), Italy, Mission 04 Component 2 Investment 1.1   funded by the European Commission - NextGeneration EU programme. 

The work of S. Rebegoldi and A. Sebastiani is partially supported by (GNCS-INdAM) within the project “Metodi data-driven convergenti per l'imaging biomedicale” CUP E53C25002010001.

A. Sebastiani is supported by the funding received from the European Research Council (ERC) Starting project MALIN under the European Union's Horizon Europe programme (grant 101117133). 
This work represents only the view of the authors. The European Commission and the other organizations are not responsible for any use that may be made of the information it contains.

\bibliographystyle{siam}
\bibliography{biblio}
\appendix

\section{Proof of Lemma \ref{lem:attouch_modified}}\label{app:proof_of_lemma}
The following proof is obtained by slightly modifying the arguments of \cite[Theorem 2]{attouch20009proximal}.

{\bf Case $\theta\in (\frac{1}{2},1)$.} Let 
$R\in (1,+\infty)$. We distinguish between two cases.
\begin{itemize}
    \item Assume first that 
    $\Delta_{k+1}^{-2\theta}\leq R\Delta_{k-1}^{-2\theta}$.
    Then, from \eqref{eq:crucial_for_rate}, we get the following chain of inequalities:
\begin{align*}
    1&\leq \tilde{c}(\Delta_{k-1}-\Delta_{k+1})\Delta_{k+1}^{-2\theta}\\
    &\leq R\tilde{c}(\Delta_{k-1}-\Delta_{k+1})\Delta_{k-1}^{-2\theta}\\
    &\leq R \tilde{c} \int_{\Delta_{k+1}}^{\Delta_{k-1}}s^{-2\theta}ds\\
    &=\frac{R\tilde{c}}{1-2\theta}\left(\Delta_{k-1}^{1-2\theta}-\Delta_{k+1}^{1-2\theta}\right),
\end{align*}
where the third inequality follows from $\Delta_{k-1}^{-2\theta}\leq s^{-2\theta}$ 
for all $s\in[\Delta_{k+1},\Delta_{k-1}]$ (the function $s\rightarrow s^{-2\theta}$ is monotone decreasing) and the monotonicity of the integral. Setting
$$
\mu = \frac{2\theta - 1}{R\tilde{c}}>0, \quad \nu=1-2\theta<0,
$$ 
one obtains
\begin{equation}\label{eq:crucial_1}
    1\leq -\frac{1}{\mu}(\Delta_{k-1}^{\nu}-\Delta_{k+1}^\nu) \quad \Leftrightarrow \quad 0< \mu \leq \Delta_{k+1}^{\nu}-\Delta_{k-1}^\nu.
\end{equation}
\item Assume now that $\Delta_{k+1}^{-2\theta}>R\Delta_{k-1}^{-2\theta}$. 
Then
\begin{align*}
   \Delta_{k+1}^{-2\theta}> R \Delta_{k-1}^{-2\theta}\quad
    &\Leftrightarrow \quad \Delta_{k-1}^{2\theta}> R \Delta_{k+1}^{2\theta}\\
    &\Leftrightarrow \quad R^{-\frac{1}{2\theta}} \Delta_{k-1} > \Delta_{k+1}\\
    &\Leftrightarrow \quad R^{-\frac{1-2\theta}{2\theta}}\Delta_{k-1}^\nu < \Delta_{k+1}^\nu\\
    &\Leftrightarrow \quad (R^{\frac{2\theta-1}{2\theta}}-1) \Delta_{k-1}^\nu < \Delta_{k+1}^\nu-\Delta_{k-1}^\nu,
\end{align*}
where the direction of the third inequality changes due to $\nu < 0$. Since $R^{\frac{2\theta-1}{2\theta}}-1>0$ and $\lim\limits_{k\rightarrow \infty}\Delta_k = 0^+$, there exists $\tilde{\mu}>0$ such that $(R^{\frac{2\theta-1}{2\theta}}-1)\Delta_{k-1}^\nu>\tilde{\mu}$ for all $k\in \N$ for which $\Delta_{k+1}^{-2\theta}>R\Delta_{k-1}^{-2\theta}$. 
Therefore we obtain
\begin{equation}\label{eq:crucial_2}
    0<\tilde{\mu}\leq \Delta_{k+1}^{\nu}-\Delta_{k-1}^\nu.
\end{equation}
\end{itemize}
By setting $\hat{\mu} = \min \{\mu,\tilde{\mu}\}$ and combining \eqref{eq:crucial_1} and \eqref{eq:crucial_2}, one gets
\begin{equation*}
    \Delta_{k+1}^\nu-\Delta_{k-1}^\nu\geq \hat{\mu}>0, \quad \forall \ k\in\N.
\end{equation*}
By summing the above inequality for $k = 1,\ldots,K$, we obtain
\begin{align*}
    \sum_{k=1}^K(\Delta_{k+1}^\nu-\Delta_{k-1}^\nu)\geq \hat{\mu} K \quad &\Leftrightarrow \quad
    \sum_{k=1}^K(\Delta_{k+1}^\nu-\Delta_{k}^\nu)+\sum_{k=1}^K(\Delta_{k}^\nu-\Delta_{k-1}^\nu)\geq \hat{\mu} K\\
    &\Leftrightarrow \quad \Delta_{K+1}^{\nu}-\Delta_1^\nu+\Delta_K^{\nu}-\Delta_0^\nu\geq \hat{\mu}K\\
    &\Leftrightarrow \quad \Delta_{K+1}^{\nu}+\Delta_K^{\nu}\geq \Delta_0^\nu+\Delta_1^\nu+\hat{\mu}K,
\end{align*}
and since $\Delta_{K+1}\leq \Delta_K$ (being $\{\Delta_k\}_{k\in\N}$ monotone non-increasing), we finally obtain
\begin{equation*}
    \Delta_{K+1}^{\nu}\geq \frac{1}{2}(\Delta_0^\nu+\Delta_1^\nu+\hat{\mu}K),
\end{equation*}
from which we conclude that there exists a constant $C>0$ such that
\begin{equation*}
    \Delta_{K+1}\leq C K^{-\frac{1}{2\theta-1}}, \quad \forall \ K\in\N,
\end{equation*}
and item (i) follows.

{\bf Case $\theta\in (0, \frac{1}{2}$).} Since in this case $2\theta\leq 1$ and $\lim_{k\rightarrow \infty}\Delta_k = 0$, we have $\Delta_{k+1}\leq \Delta_{k+1}^{2\theta}$ for all sufficiently large $k\in\N$, and from \eqref{eq:crucial_for_rate} we get $\Delta_{k+1}\leq \tilde{c}(\Delta_{k-1}-\Delta_{k+1})$, or equivalently
\begin{equation*}
\Delta_{k+1}\leq \left(\frac{\tilde{c}}{1+\tilde{c}}\right)\Delta_{k-1}.
\end{equation*}
If $k$ is odd, the iterative application of the previous inequality entails
\begin{equation*}
\Delta_{k+1}\leq \left(\frac{\tilde{c}}{1+\tilde{c}}\right)^{\frac{k+1}{2}}\Delta_{0}.
\end{equation*}
If $k$ is even, the same reasoning leads to
\begin{equation*}
\Delta_{k+1}\leq \left(\frac{\tilde{c}}{1+\tilde{c}}\right)^{\frac{k}{2}}\Delta_{1}.    
\end{equation*}
Therefore, for all $k\in\N$, we have
\begin{equation*}
\Delta_{k+1}\leq C\left(\frac{\tilde{c}}{1+\tilde{c}}\right)^\frac{k}{2}, \quad C =\max\left\{\Delta_0\sqrt{\frac{\tilde{c}}{1+\tilde{c}}},\Delta_1\right\},
\end{equation*}
and the proof of item (ii) is complete.

\section{Computation of the proximal operator of a least squares functional}\label{app:prox_computation_exact}
Given the data fidelity term $\mathcal{D}(x) = \frac{1}{2}\|Ax - b\|^2$, where $A \in \mathbb{R}^{n \times n}$ and $x, y \in \mathbb{R}^n$, we recall the closed-form expression of $\prox_{\alpha \mathcal{D}}(z)$, with $z \in \mathbb{R}^n$, in the case where $A$ represents either a blurring or a super-resolution operator.

If $A=H$ is a convolution operator with circular boundary condition, it can be expressed as $H = F^{*}\Lambda F$,
where $F$ denotes the orthogonal discrete Fourier transform matrix, 
$F^{*}$ its inverse, and $\Lambda$ is diagonal. As a consequence, the computation of the corresponding proximal operator reduces to an element-wise inversion in the Fourier domain:
$$
\prox_{\alpha \mathcal{D}}(z) 
= F^{*}\,(I_n + \alpha \Lambda^{*}\Lambda)^{-1}F\,(\alpha H^{T}b + z).
$$

If $A = SH$ where $S$ is the standard $s$-fold downsampling matrix of size $m\times n$ and $n = s^2\times m$, and $H\in\mathbb{R}^{n\times n}$ is a convolution operator with circular boundary condition, then, in \cite{Zhao-2016}, the authors provide the closed-form computation of the proximal map for the data-fidelity $\mathcal{D}(x)$:
$$
\prox_{\alpha \mathcal{D}}(z) =  \hat{z}_{\alpha} - \frac{1}{s^2}F^*\underline{\Lambda}^*\left(I_m+\frac{\alpha}{s^2}\underline{\Lambda}\underline{\Lambda}^*\right)^{-1}\underline{\Lambda}F\hat{z}_{\alpha},
$$
where $\hat{z}_{\alpha} = \alpha H^TS^Tb+z$ and $\underline{\Lambda} = [\Lambda_1, \dots, \Lambda_{s^2}]\in\mathbb{R}^{m\times n}$, with $\Lambda = diag(\Lambda_1,\dots,\Lambda_{s^2})$ a blockdiagonal decomposition according to a $s\times s$ paving of the Fourier domain. We remark that $I_m+\frac{\alpha}{s^2}\underline{\Lambda}\underline{\Lambda}^*$ is a $m\times m$ diagonal matrix and its inverse is computed element-wise.
\section{Inexact computation of the proximal operator of a least squares functional restricted to a block of coordinates}\label{app:prox_computation}
To obtain a point $\tilde{y}_k$ as defined in \textsc{Step 3} of Algorithm \ref{alg:block-VMILA} when $\phi(x) = \frac{1}{2}\|Ax-b\|^2$ and $N\geq 1$, we first make the following remark. Hereafter, we drop the iteration index $k$ for simplicity. 
\begin{remark}
Given $x\in\mathbb{R}^n$, $i\in\{1,\dots,N\}$ and $U_j\in\R^{n\times n_j}$, $j=1,\dots,N$, note that, in general, 
\begin{equation*}
    \phi_i^x(z) = \phi\left(\begin{array}{c}
         U_1^Tx\\
         \vdots\\
         U_{i-1}^Tx\\
         z\\
         U_{i+1}^Tx\\
         \vdots\\
         U_{N}^Tx
    \end{array}\right).
\end{equation*}
Furthermore, if $\phi(x)=\frac{1}{2}\|Ax-b\|^2$, then
\begin{align*}
   \phi_i^x(z) & = \frac{1}{2}\left\|A\left(
         (U_1^Tx)^T, \cdots ,(U_{i-1}^Tx)^T,z,(U_{i+1}^Tx)^T, \cdots , 
         (U_{N}^Tx)^T\right)^T -b\right\|^2\\
         & = \frac{1}{2}\left\|AU_i z - b + A\left(
         (U_1^Tx)^T, \cdots ,(U_{i-1}^Tx)^T,0,(U_{i+1}^Ti)^T, \cdots , 
         (U_{N}^Tx)^T\right)^T\right\|^2\\
         &=\frac{1}{2}\|AU_i z - b_{i}^x\|^2,
\end{align*}
where $b_{i}^x = b - A\left(
         (U_1^Tx)^T, \cdots ,(U_{i-1}^Tx)^T,0,(U_{i+1}^Tx)^T, \cdots , 
         (U_{N}^Tx)^T\right)^T$.
\end{remark}

In view of this remark, we are interested in the inexact computation of the following proximal point
\begin{align}\label{eq:proximal_point2}
\prox^{{D}}_{\alpha \phi_i^x}(\bar{x}_i) &= \underset{y\in\R^{n_i}}{\operatorname{argmin}}\ \phi_i^x(y)+\frac{1}{2\alpha}\|y-\bar{x}_i\|_{{D}}^2
\end{align}
where {$D\in\R^{n_i}$ is a symmetric positive definite matrix}, $\bar{x}_i = U_i^T(x+\beta(x-w))-\alpha {D^{-1}}U_i^T\nabla f(x)$, being $x,w\in\R^n$, $\alpha,\beta \in\mathbb{R}^+$ and $\phi_i^{x}:\R^n\rightarrow \R$ is the least squares function given by
\begin{equation}\label{eq:fi_2}
    \phi_i^{x}(y) = \frac{1}{2}\|AU_i y-b_i^x\|^2.
\end{equation}
The following procedure is an application of the more general framework discussed in \cite[Appendix A]{Bonettini-Prato-Rebegoldi-2024} for computing inexact proximal points satisfying condition \eqref{eq:tilde_yki}. Let us rewrite function \eqref{eq:fi_2} as 
\begin{equation*}
    {\phi_i^{x}(y)} = \omega_x(M_iy), 
\end{equation*}
with $\omega_x:\R^n\rightarrow \R$, $\omega_x(\cdot) = \frac{1}{2}\|\cdot-b_i^x\|^2$, and $M_i = AU_i\in\R^{n\times n_i}$. Note that $\omega_x(t) = \rho(t-b_i^x)$, where $\rho(\cdot) = \frac{1}{2}\|\cdot\|^2$. By using a well-known calculus rule holding for conjugate functions \cite[Proposition 1.3.1(v)]{Hiriart-1993}, we conclude that
\begin{equation*}    \omega^*_x:\R^{n}\rightarrow \R, \quad \omega^*_x(s) = \rho^*(s)+s^Tb_i^x = \frac{1}{2}\|s\|^2+s^Tb_i^x.  
\end{equation*}
The primal problem associated to the computation of \eqref{eq:proximal_point2} writes as
\begin{align}
    &\underset{y\in\R^{n_i}}{\min} \ h(y) \equiv {\phi_i^{x}(y)}-{\phi_i^{x}(U_i^Tx)} +\frac{1}{2\alpha}\|y-U_i^Tx\|^2_{D}+\langle U_i^T\nabla f(x)-\frac{\beta}{\alpha}{D}U_i^T(x-w), y- U_i^Tx\rangle\nonumber\\
    & = \omega_x(M_iy)+\frac{1}{2\alpha}\|y-\bar{x_i}\|^2_D-\phi_i^{x}(U_i^Tx)-\frac{\alpha}{2}\left\|U_i^T\nabla f(x)-\frac{\beta}{\alpha}D U_i^T(x-w)\right\|^2_{D^{-1}}. \label{eq:primal_problem_2}
\end{align}
Note that the primal function $h$ has the same form as the function $h_k$ defined at each iteration of Algorithm \ref{alg:block-VMILA}. On the other hand, by means of \eqref{eq:primal_problem_2}, we can write the dual problem as follows
\begin{align}
  &\underset{v\in\R^{n}}{\max} \ \psi(v)\equiv -\omega^*_x(v) - \frac{1}{2\alpha}\|\bar{x}_i-\alpha D^{-1}M_i^* v \|^2_D+\frac{1}{2\alpha}\|\bar{x}_i\|^2_D-\phi_i^x(U_i^Tx)-\frac{\alpha}{2}\|U_i^T\nabla f(x)-\frac{\beta}{\alpha}D U_i^T(x-w)\|^2_{D^{-1}}\nonumber\\
  &=-\frac{1}{2}\|v\|^2-v^Tb_i^x - \frac{1}{2\alpha}\|\bar{x}_i-\alpha D^{-1}U_i^TA^T v \|^2_D+\underbrace{\frac{1}{2\alpha}\|\bar{x}_i\|^2_D-\phi_i^x(U_i^Tx)-\frac{\alpha}{2}\|U_i^T\nabla f(x)-\frac{\beta}{\alpha} D U_i^T(x-w)\|^2_{D^{-1}}}_{=constant}.\label{eq:dual_problem}
\end{align}
{Note that the gradient of $\psi$ at $v$ is given by
\begin{align*}
\nabla \psi(v) &= -v-b_i^x-AU_iD^{-\frac{1}{2}}(\alpha D^{-\frac{1}{2}}U_i^TA^Tv-D^{\frac{1}{2}}\bar{x}_i) \\
&= -v-b_i^x-\alpha A U_i\left(D^{-1}U_i^TA^Tv-\frac{1}{\alpha}\bar{x}_i\right)\\
&=-\left(I+\alpha A U_iD^{-1}U_i^TA^T\right)v+AU_i\bar{x}_i-b_i^x.
\end{align*}
}

By definition of the Fenchel dual, we have
$$
h(y)\geq \psi(v), \quad \forall y\in\mathbb{R}^{n_i}, \forall v \in\mathbb{R}^n. 
$$
In particular, the previous inequality holds for $y = \hat{y} = \prox_{\alpha\phi_i^x}(\bar{x}_i)$ and for any $v\in\mathbb{R}^n$. Therefore if $(\tilde{y},v)$ is a primal dual pair satisfying
$$
h(\tilde{y})\leq \frac{2}{2+\tau} \psi(v),
$$
then $\tilde{y}$ is an inexact proximal gradient point in the sense of \eqref{eq:tilde_yki}. According to \cite[Proposition 19]{Bonettini-Prato-Rebegoldi-2024}, the existence of such a pair is guaranteed, and it can be computed in practice. For completeness, we restate the proposition in the context of our framework.
\begin{prop}\label{prop:appendix}
Given $\phi_i^x$ defined as is \eqref{eq:fi_2}, let $\{v_\ell\}_{\ell\in\N}\subset \mathbb{R}^n$ be a sequence such that 
\begin{equation*}
        \lim_{\ell\rightarrow +\infty}v_\ell = \underset{v\in \R^n}{\operatorname{argmax}} \ \psi(v).
    \end{equation*}
    By defining the corresponding primal sequence $\{\tilde{y}_{\ell}\}_{\ell\in\N}\subset \mathbb{R}^{n_i}$ as
    \begin{equation*}
        \tilde{y}_{\ell} = \bar{x}_i-\alpha D^{-1} M_i^* v_\ell = \bar{x}_i-\alpha D^{-1}U_i^TA^Tv_\ell, \quad \forall \ \ell\in\N,
    \end{equation*}
    it holds that
    $$
    \lim_{\ell\rightarrow +\infty} \psi(v_\ell) = \lim_{\ell\rightarrow +\infty} h(\tilde{y}_\ell) = h(\prox_{\alpha \phi_i^x}(\bar{x}_i)).
    $$
    Therefore the inequality 
    $$
    h(\tilde{y}_\ell)\leq \frac{2}{2+\tau} \psi(v_\ell)
    $$
    holds for all sufficiently large $\ell$, for any given $\tau>0$.
\end{prop}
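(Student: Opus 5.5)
The plan follows the standard primal--dual argument of \cite[Proposition 19]{Bonettini-Prato-Rebegoldi-2024}, specialized to the least squares function \eqref{eq:fi_2}. First I record the structure of the primal problem. The function $h$ in \eqref{eq:primal_problem_2} is strongly convex, since $D\succ 0$ and $\omega_x$ is convex. It is also continuous and has the unique minimizer $\hat{y}=\prox^D_{\alpha\phi_i^x}(\bar{x}_i)$. The dual function $\psi$ in \eqref{eq:dual_problem} is strongly concave, because of the term $-\frac{1}{2}\|v\|^2$, so it has a unique maximizer $v^*$. Both are finite-valued quadratics, so continuity is automatic.

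The key step is strong duality. Since $\omega_x$ is finite and convex on all of $\R^n$, the constraint qualification for Fenchel--Rockafellar duality holds trivially, and therefore $\max\psi=\min h$. Concretely, the optimality condition $\nabla\psi(v^*)=0$ computed above gives
$$v^*=M_i\hat{y}-b_i^x=\nabla\omega_x(M_i\hat{y}),$$
with $\hat{y}=\bar{x}_i-\alpha D^{-1}M_i^*v^*$. This primal--dual link, combined with the Fenchel--Young equality $\omega_x(M_i\hat{y})+\omega_x^*(v^*)=\langle v^*,M_i\hat{y}\rangle$, gives $h(\hat{y})=\psi(v^*)$ by a direct expansion of the two quadratics. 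This expansion is the only computation that needs care, and it is where the constants in \eqref{eq:primal_problem_2}--\eqref{eq:dual_problem} have to match exactly.

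Next I handle the limits. The map $v\mapsto\bar{x}_i-\alpha D^{-1}M_i^*v$ is affine, so $v_\ell\to v^*$ implies $\tilde{y}_\ell\to\hat{y}$. By continuity of $h$ and $\psi$ we then get $h(\tilde{y}_\ell)\to h(\hat{y})$ and $\psi(v_\ell)\to\psi(v^*)=h(\hat{y})$, which proves the first claim.

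For the final inequality I compare limits. Assume first that $h(\hat{y})<0$. The left-hand side $h(\tilde{y}_\ell)$ tends to $h(\hat{y})$, while the right-hand side $\frac{2}{2+\tau}\psi(v_\ell)$ tends to $\frac{2}{2+\tau}h(\hat{y})$. Since $\tau>0$ and $h(\hat{y})<0$, we have $h(\hat{y})<\frac{2}{2+\tau}h(\hat{y})$, so the strict inequality between the limits transfers to all large $\ell$.

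The degenerate case $h(\hat{y})=0$ is the main obstacle. In that case $\hat{y}=U_i^Tx$, and the limit argument above no longer applies. I would treat it separately, either by noting that the inexactness condition \eqref{eq:tilde_yki} is then satisfied by taking $\tilde{y}=U_i^Tx$ directly, or by restricting the statement to the nonstationary case, as is done in \cite{Bonettini-Prato-Rebegoldi-2024}. Finally, weak duality $h\geq\psi$ yields $h(\tilde{y}_\ell)\leq\frac{2}{2+\tau}\psi(v_\ell)\leq\frac{2}{2+\tau}h(\hat{y})$, so $\tilde{y}_\ell$ satisfies \eqref{eq:tilde_yki}.
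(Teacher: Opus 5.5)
Your argument is correct and is the primal--dual continuity argument behind \cite[Proposition 19]{Bonettini-Prato-Rebegoldi-2024}. The paper gives no proof of its own and simply defers to that result; your explicit verification that $v^*=M_i\hat{y}-b_i^x$ and $h(\hat{y})=\psi(v^*)$ supplies what the citation leaves implicit.

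Your concern about $h(\hat{y})=0$ is more than a technical obstacle: in that case the statement is false. Weak duality gives $h(\tilde{y}_\ell)\geq h(\hat{y})=0\geq\psi(v_\ell)$, so the inequality would force $h(\tilde{y}_\ell)=\psi(v_\ell)=0$, and hence $v_\ell=v^*$. This fails, for example, for $v_\ell=v^*+e/\ell$ with $e\neq 0$, where strict concavity gives $\psi(v_\ell)<0$ for every $\ell$. So the hypothesis $h(\hat{y})<0$ (equivalently $\hat{y}\neq U_i^Tx$) must be added to the proposition. Your alternative of taking $\tilde{y}=U_i^Tx$ directly rescues the algorithm, but it does not prove the proposition as stated.
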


The dual sequence $\{v_\ell\}_{\ell\in\mathbb{N}}$ can be obtained by applying an iterative gradient-based optimization scheme to the dual problem, which is stopped at the first iteration $\ell^*$ satisfying
    \begin{equation*}
        h(\tilde{y}_{\ell^*})\leq \frac{2}{2+\tau}\psi(v_{\ell^*}).
    \end{equation*}   
 According to Proposition \ref{prop:appendix},   $\tilde{y}$ can be fixed as $\tilde{y}_{\ell^*}$.

The solution of the dual problem \eqref{eq:dual_problem} can be derived by means of the first optimality condition $\nabla\psi(v)=0$, namely

\begin{equation}\label{eq:opt_condition_dual}
    -\left(I+\alpha A U_iD^{-1}U_i^TA^T\right)v+AU_i\bar{x}_i-b_i^x = 0.
\end{equation}

Which reduces to finding a solution of a linear system. We distinguish two cases: the fist one, when the scaling matrix is the multiple of the identity, and the second one, when the scaling matrix is non-trivial.

\paragraph{Identity case} The linear system, \eqref{eq:opt_condition_dual} corresponds to 
\begin{align}
     \left(I + \alpha AU_iU_i^TA^T \right)v =  A U_i\bar{x}_i -b_i^x, 
\end{align}
whose solution can be computed iteratively by using a variant of the Gauss–Seidel method, observing that the matrix on the left can also be seen as

\begin{align}
    \left(I + \alpha AU_iU_i^TA^T \right)=\left(I + \alpha AA^T - \alpha AU_{i^C}U_{i^C}^TA^T \right).
\end{align}

So the iteration of the subroutine for the inexact computation of the restricted proximal operators reads as

\begin{equation}
    v_{\ell+1} = (I + \alpha AA^T)^{-1}\left(A^TU_i\bar{x}_i -b_i^x  + \alpha AU_{i^C}U_{i^C}^TA^Tv_{\ell}\right),
\end{equation}
where the inverse is computed in closed form, similarly to \ref{app:prox_computation_exact} and $U_{i^C}U_{i^C}^T=I-\Tilde{U}_i$.

\paragraph{Non-uniform} The linear system, \eqref{eq:opt_condition_dual} can be approximately solved by means of a conjugate gradient iteration until the inexactness condition is satisfied.
\end{document}